\documentclass{amsart}
\setlength{\textwidth}{\paperwidth}
\setlength{\oddsidemargin}{0truemm}
\setlength{\evensidemargin}{0truemm}
\addtolength{\textwidth}{-50truemm}
\usepackage{graphicx}
\usepackage{amsmath}
\usepackage{epstopdf} 
%
%
\usepackage{amsfonts}
\usepackage{multirow}
\usepackage{color}
\usepackage{xspace}
\usepackage{array}
\usepackage{tikz}
\usetikzlibrary{positioning}
\usetikzlibrary{arrows}
\usepackage{pgfplots}
\usepackage{algorithm}
\usepackage{algorithmic}
%

\newcommand{\rd}{\mathrm{d}}
\newcommand{\RR}{\mathbb{R}}
\newcommand{\Sb}{\mathbb{S}}
\newcommand{\hd}{h_{\mathrm{d}}}

\newcommand{\ud}[2]{u^{\left( #1 \right)}_{ #2 }}
\newcommand{\tud}[2]{\tilde{u}^{\left( #1 \right)}_{ #2 }}
\newcommand{\vd}[2]{v^{\left( #1 \right)}_{ #2 }}
\newcommand{\taud}[2]{\tau^{\left( #1 \right)}_{ #2 }}
\newcommand{\ed}[2]{e^{\left( #1 \right)}_{ #2 }}
\newcommand{\rhod}[2]{\rho^{\left( #1 \right)}_{ #2 }}
\newcommand{\fd}{\delta^+}
\newcommand{\bd}{\delta^-}
\newcommand{\cd}[1][1]{\delta^{\langle #1 \rangle}}
\newcommand{\ifd}{\sigma^+}
\newcommand{\ibd}{\sigma^-}
\newcommand{\icd}[1][1]{\sigma^{\langle #1 \rangle}}

\newcommand{\fa}{\mu^+}
\newcommand{\ba}{\mu^-}

\newcommand{\Range}{\mathop{\mathrm{range}}\nolimits}
\newcommand{\Null}{\mathop{\mathrm{null}}\nolimits}
\newcommand{\Car}{\mathop{\mathrm{car}}\nolimits}
\newcommand{\im}{\mathrm{i}}
\newcommand{\pinv}[1]{{#1}^{\dagger}}

\newtheorem{theorem}{Theorem}[section]
\newtheorem{lemma}[theorem]{Lemma}
\newtheorem{corollary}[theorem]{Corollary}
\newtheorem{proposition}[theorem]{Proposition}

\theoremstyle{remark}
\newtheorem{remark}[theorem]{Remark}
%

\begin{document}
	

\title[Stability and convergence of conservative scheme for modified Hunter--Saxton equation]{Stability and convergence of\\ a conservative finite difference scheme\\ for the modified Hunter--Saxton equation}


\author{Shun Sato}
\address{Graduate School of Information Science and Technology, The University of Tokyo, Bunkyo-ku, Tokyo, Japan}
\email{shun\_sato@mist.i.u-tokyo.ac.jp}
\thanks{The first author was supported in part by JSPS Research Fellowship for Young Scientists.}

\date{February, 2018}

\keywords{Modified Hunter--Saxton equation \and Geometric integration \and Stability \and Convergence}
\subjclass{65M06 \and 65M12}

\maketitle

\begin{abstract}
The modified Hunter--Saxton equation models the propagation of short capillary-gravity waves.
As it involves a mixed derivative, its initial value problem on the periodic domain is much more complicated than the standard evolutionary equations.
Although its local well-posedness has recently been proved, the behavior of its solution is yet to be investigated.
In this paper, to develop a reliable numerical method for this problem,
we derive a conservative finite difference scheme.
Then, we rigorously prove not only its stability in the sense of the uniform norm but also its uniform convergence to sufficiently smooth exact solutions.
Discrete conservation laws are used to overcome the difficulty due to the mixed derivative.
\end{abstract}

\section{Introduction}
\label{sec_intro}

This paper focuses on numerical integration of the initial value problem for the modified Hunter--Saxton (mHS) equation~\cite{FMN2007}
\begin{equation}\label{eq_mHS}
	\begin{cases}
		u_{tx} + \frac{1}{2} \left(u^2 \right)_{xx} = 2 \omega u + \frac{1}{2} u_x^2 \qquad & (t \in (0,T), x \in \Sb ), \\
		u (0,x) = u_0 (x) & (x \in \Sb)
	\end{cases}
\end{equation}
on the periodic domain $ \Sb := \RR / L \mathbb{Z} $, where $ \omega \in \RR $ is a nonzero constant
and the subscripts $t$ and $x$ denote partial derivatives.

The mHS equation models the propagation of short capillary-gravity waves.
The case $ \omega = 0$ corresponds to the Hunter--Saxton (HS) equation~\cite{HS1991},
which was originally derived to investigate the nonlinear instability in the director field of a nematic liquid crystal.
The initial value problem for the HS equation on the whole real line $\RR $ or the half line $ [0,+ \infty) $ (the Dirichlet boundary condition is imposed at $ x = 0$)
has been studied intensively in both partial differential equation (PDE) theory and numerical analysis~(see \cite{HKR2007} and the references therein).
In addition, the HS equation on the periodic domain has attracted considerable attention. In this case, as it is underdetermined owing to the presence of the mixed derivative $ u_{tx} $,
some techniques for choosing an appropriate solution have been studied
(see \cite{Y2004,MCFM2017} and the references therein for details).

On the other hand, Li and Yin~\cite{LY2017_1} recently showed that the initial value problem for the periodic mHS equation~\eqref{eq_mHS} is locally well-posed.
In this case, although the mixed derivative $ u_{tx} $ is involved,
the implicit constraint
\begin{equation}\label{eq_ic}
	\mathcal{F} (u(t)) = 0, \qquad \mathcal{F} (v) := \int_{\Sb} \left( 2 \omega v + \frac{1}{2} v_x^2 \right) \rd x
\end{equation}
provides us with the reformulation into the standard form $ u_t = ... $,
which enables us to use Kato's theory~\cite{K1975} (see \cite{SM2017+2} for details on this type of reformulation).
Moreover, Li and Yin~\cite{LY2017_1} showed the $L^{\infty}$ bound of the solution by using a conservation law,
\begin{equation}\label{eq_norm}
	\mathcal{H} (u(t) ) = \mathcal{H} (u_0) , \qquad \mathcal{H} (v) := \frac{1}{2} \int_{\Sb}  v_x^2 \rd x,
\end{equation}
and the implicit constraint~\eqref{eq_ic}
(see Proposition~\ref{prop_snb}).
Furthermore, they reported blow up phenomena with respect to $ \| u_x \|_{\infty} $ (see Section~\ref{subsec_bup} for details on blow up phenomena).

\begin{remark}\label{rem_LY}
	Li and Yin~\cite{LY2017_1} actually dealt with the initial value problem for
	\begin{equation}\label{eq_mHS_LY}
	U_{TX} = U + 2U U_{XX} +U_X^2.
	\end{equation}
	on the domain $ \RR / \mathbb{Z} $.
	However, the solution $U$ of \eqref{eq_mHS_LY} is related to the solution $u$ of~\eqref{eq_mHS} via the scaling
	\[ U(T,X) = - \frac{1}{4 \omega L^2 } u \left( \frac{T}{2 \omega L } , L X \right).  \]
	Therefore, in this paper, we refer to their results by appropriately extending them to our case~\eqref{eq_mHS}.
\end{remark}

However, numerical investigation of the mHS equation has not been conducted sufficiently thus far.
The only related numerical study is that by Miyatake, Cohen, Furihata, and Matsuo~\cite{MCFM2017},
who considered another problem related to the mHS equation.
Specifically,
as their study was conducted before the local well-posedness result was obtained~\cite{LY2017_1},
they considered how to obtain the traveling wave solution (discovered by Lenells~\cite{L2009}) of the underdetermined form
\begin{equation}\label{eq_mHS2}
	u_{txx} + \frac{1}{2} \left(u^2 \right)_{xxx} = 2 \omega u_x + \frac{1}{2} \left( u_x^2 \right)_x
\end{equation}
of the mHS equation.
They reported that their method facilitates long-term reproduction of the traveling wave solution smoothly and effectively
owing to the discrete preservation of $ \mathcal{H} $.
However, their method deals with obtaining the traveling wave solution; it does not deal with our problem~\eqref{eq_mHS}, and there is no other mathematical analysis, such as unique existence and convergence.

Such mathematical analysis is particularly challenging
when the target equation involves the mixed derivative $ u_{tx} $.
At present, convergence analysis for such equations can be found only in the work of Coclite, Ridder, and Risebro~\cite{CRR2017}.
They proved the convergence of the numerical solution of some finite difference schemes to the unique entropy solution of the reduced Ostrovsky equation~\cite{H1990}
\begin{equation}\label{eq_rO}
	u_{tx} + \frac{1}{2} \left( u^2 \right)_{xx} = \gamma u,
\end{equation}
which models the propagation of water waves on a very shallow rotating fluid ($ \gamma $ is a nonzero parameter).
Note that their result can be applied to equations in the general form $ u_{tx} + ( f(u))_{xx} = \gamma u $, including the short pulse equation~\cite{SW2004} ($ f : \RR \to \RR $ is assumed to be $C^2$).

As the mHS equation~\eqref{eq_mHS} resembles the reduced Ostrovsky equation~\eqref{eq_rO},
extension of the convergence result may seem straightforward.
However, this is not the case, because the simplicity of the right-hand side of~\eqref{eq_rO} is an essential requirement.
Actually, as discussed in~\cite{SM2017+2}, the reduced Ostrovsky equation (and the more general case of $ u_{tx} + (f(u))_{xx} = \gamma u $) is the simplest one in the class of evolutionary equations involving the mixed derivative.
Specifically, owing to the presence of the linear implicit constraint $ \int_{\Sb} u(t,x) \rd x = 0 $,
the reduced Ostrovsky equation can be rewritten in the standard form of evolutionary equations,
\[ u_t + \frac{1}{2} \left(u^2 \right)_x = \int_0^x u(t,y) \rd y - \frac{1}{L} \int_{\Sb} \int_0^z u(t,y) \rd y \rd z, \]
which resembles scalar conservation laws.
As the right-hand side is not too problematic,
Coclite, Ridder, and Risebro~\cite{CRR2017} used the knowledge of numerical methods for scalar conservation laws.

As mentioned above, the mHS equation can be rewritten in the standard form of evolutionary equations,
but the right-hand side involves quadratic nonlinearity of the spatial derivative.
This makes mathematical analysis of the numerical method a challenging task.
In this paper, we derive a stable finite difference scheme for the mHS equation,
whose numerical solution uniquely exists and converges to the sufficiently smooth exact solution.

In view of the known blow up results of the mHS equation,
we expect that its solutions tend to develop sharp fronts.
Therefore, to safely conduct numerical experiments,
some special treatment is indispensable.
Toward this end, in this paper,
we try to derive a numerical scheme inheriting the $L^{\infty} $ bound shown by Li and Yin~\cite{LY2017_1}.
As the $L^{\infty} $ bound comes from invariants $ \mathcal{F} $ and $ \mathcal{H} $,
we construct a numerical method preserving their discrete counterparts.
Although the preservation of multiple invariants is known to be computationally expensive in the literature on structure-preserving methods (see, e.g., \cite{MQR1998}),
we can efficiently achieve the discrete preservation of $ \mathcal{F} $ and $ \mathcal{H} $ in this case
owing to some special property of $ \mathcal{H} $ (see Section~\ref{sec_des}).
Moreover, we reveal the relation between our scheme and the scheme of Miyatake, Cohen, Furihata, and Matsuo~\cite{MCFM2017},
which shows that the mathematical analysis presented in this paper can be extended to their numerical method (see Remark~\ref{rem_equiv}).

Then, we prove the unique existence of the numerical solution in Section~\ref{subsec_ue}.
To achieve a tighter sufficient condition for unique existence,
we adopt some discrete reformulation into a form suitable for the contraction mapping theorem.
Through this approach, we achieve the mild sufficient condition $ \Delta t = O (\Delta x) $ (Lemmas~\ref{lem_ball} and \ref{lem_contraction}).

Subsequently, we show the $L^{\infty} $ global error estimate (Corollary~\ref{cor_conv}) of our numerical scheme in Section~\ref{subsec_conv}.
To overcome the difficulty due to the mixed derivative~$ u_{tx} $,
we combine the standard local truncation error estimate (Lemma~\ref{lem_lte}) and the average error estimate (Lemma~\ref{lem_ae}),
which comes from the discrete conservation laws of $ \mathcal{H} $ and $ \mathcal{F} $.

It should be emphasized that, except for the simplest case, such as the reduced Ostrovsky equation,
our contribution is the first rigorous convergence analysis of the numerical method for evolutionary equations having the mixed derivative $ u_{tx} $.
In particular, the present result deals with the nonlinear implicit constraint~\eqref{eq_ic}, while
the reduced Ostrovsky equation has the linear implicit constraint~$ \int_{\Sb} u(t,x) \rd x = 0 $ (see~\cite{SM2017+2} for difficulties that arise when the implicit constraint is nonlinear).
Therefore, we believe that this achievement signifies the possibility of rigorous justification of existing and future numerical methods for such equations (see, e.g., \cite{YMS2010,MYM2012,OS2012,FSM2016} for existing results without convergence analysis).

In addition, we conduct two numerical experiments.
One is to numerically confirm the theoretical convergence rate,
and the other is to observe new blow up phenomena.
The latter result may suggest an interesting direction for future studies on the analytical aspects of the mHS equation.

The remainder of this paper is organized as follows.
Section~\ref{sec_pre} presents some preliminaries, including several discrete inequalities.
Section~\ref{sec_conti} reviews the $L^{\infty} $ bound of the exact solution and associated invariants.
As mentioned earlier, the main result is stated in Sections~\ref{sec_des} and~\ref{sec_anal}.
Section~\ref{sec_ne} is devoted to numerical experiments for confirming the convergence rate and observing new blow up phenomena.
Finally, Section~\ref{sec_cr} concludes the paper.

\section{Preliminaries}
\label{sec_pre}

Let us introduce the discrete symbol $ \ud{m}{k} \approx u(m\Delta t, k \Delta x) \ ( m = 0,\dots ,M ; k \in \mathbb{Z} )$ and
impose the discrete periodic boundary condition $ \ud{m}{k+K} = \ud{m}{k} \ ( m = 0,\dots, M; k \in \mathbb{Z})$,
where $ K \in \mathbb{Z} $ is a constant satisfying $ L = K \Delta x $.
Here, $ \Delta t $ and $ \Delta x $ are temporal and spatial mesh sizes, respectively (in this paper, we deal with only the uniform mesh for simplicity).
In view of the discrete periodicity, we often use the notation $ \ud{m}{} := \left( \ud{m}{1} ,\dots , \ud{m}{K} \right)^{\top} $.
The contents of this section are based on the work of Furihata and Matsuo~\cite{FM2011}.

We define the discrete Lebesgue space $ L^p_K (\Sb) $ ($ 1 \le p \le \infty $) as the pair $ \left( \RR^K , \| \cdot \|_p \right) $, where the norm is defined as
\begin{equation}\label{def_norm_d}
	\| v \|_p := \left( \sum_{k=1}^K \left| v_k \right|^p \Delta x \right)^{\frac{1}{p}} \ (1 \le p < \infty), \qquad
	\| v \|_{\infty} := \max_{ k \in \mathbb{Z}} \left| v_k \right|.
\end{equation}

For $ L^2_K (\Sb)$, the associated inner product $ \langle \cdot , \cdot \rangle $ is defined as
\[ \langle v , w \rangle = \sum_{k=1}^K v_k w_k \Delta x. \]
For simplicity, we abbreviate $ \| \cdot \|_2 $ by omitting the subscript $2$ hereafter.

It should be noted that, in Section~\ref{sec_conti}, we use the notation $ \| \cdot \|_p $ for the standard (continuous) $p$-norms.
Although this is an abuse of notation, we use it because no confusion occurs.

\subsection{Difference and average operators}

Let us introduce the forward, backward, and central difference operators:
\begin{align*}
	\fd_x v_k &:= \frac{v_{k+1} - v_k}{\Delta x}, &
	\bd_x v_k &:= \frac{v_{k} - v_{k-1}}{\Delta x}, &
	\cd_x v_k &:= \frac{v_{k+1} - v_{k-1}}{2\Delta x}.
\end{align*}
Note that $ \fd_x $ is the operator such that $ \fd_x : \RR^K \to \RR^K $ and
$ ( \fd_x v )_k $ is abbreviated as $ \fd_x v_k $.
Then, the second-order central difference operator $ \cd[2]_x $ is defined as
\[ \cd[2]_x v_k = \frac{v_{k+1} - 2 v_k + v_{k-1}}{(\Delta x)^2}. \]
Moreover, the forward and backward average operators are defined as follows:
\begin{align*}
	\fa_x v_k &:= \frac{v_{k+1} + v_k}{2}, &
	\ba_x v_k &:= \frac{v_{k} +v_{k-1}}{2}.
\end{align*}
Similarly, we use the temporal forward difference and average operators defined as
\begin{align*}
	\fd_t \ud{m}{k} &= \frac{\ud{m+1}{k} - \ud{m}{k}}{\Delta t},&
	\fa_t \ud{m}{k} &= \frac{\ud{m+1}{k} + \ud{m}{k}}{2}.
\end{align*}

Then, several basic properties hold as follows (their proofs are straightforward):

\begin{lemma}
	The following properties hold:
	\begin{enumerate}
		\item All the above-mentioned operators commute with each other;
		\item $ \cd_x = \fd_x \ba_x = \bd_x \fa_x $;
		\item $ \cd[2]_x = \fd_x \bd_x $;
		\item $ \fd_x (v_k w_k) = ( \fa_x v_k ) ( \fd_x w_k) + ( \fd_x v_k ) ( \fa_x w_k ) $;
		\item Summation by parts formulae corresponding to the skew-symmetry of $ \partial_x $:
		\begin{align*}
			\langle \fd_x v , w \rangle &= - \langle v , \bd_x w \rangle,&
			\langle \cd_x v, w \rangle &= - \langle v, \cd_x w \rangle;
		\end{align*}
		\item Properties corresponding to the symmetry of the identity map and $ \partial_x^2 $:
		\begin{align*}
			\langle \fa_x v , w \rangle &=  \langle v , \ba_x w \rangle, &
			\langle \cd[2]_x v, w \rangle &= \langle v, \cd[2]_x w \rangle;
		\end{align*}
	\end{enumerate}
\end{lemma}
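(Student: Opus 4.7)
The plan is to verify each item by direct computation, exploiting that every operator in sight is a linear combination of the shift operators $S_{\pm 1} : v_k \mapsto v_{k\pm 1}$ acting on the periodic sequence space $\RR^K$. The only nontrivial ingredient is the periodic boundary condition, which is what will allow the index shifts in the summation-by-parts identities.

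Item (1) follows at once: the shifts $S_{\pm 1}$ commute, hence so does any polynomial combination of them, and in particular all listed spatial operators (and their temporal analogues). For (2) and (3) I would substitute the definitions; for instance
\[
\fd_x \ba_x v_k = \frac{1}{\Delta x}\left( \frac{v_{k+1}+v_k}{2} - \frac{v_k + v_{k-1}}{2} \right) = \cd_x v_k,
\]
and analogously $\bd_x \fa_x v_k = \cd_x v_k$ and $\fd_x \bd_x v_k = \cd[2]_x v_k$. The discrete Leibniz rule (4) is a direct algebraic identity: after clearing the common factor $1/(2\Delta x)$, both sides collapse to $v_{k+1}w_{k+1} - v_k w_k$.

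For (5) I would compute
\[
\langle \fd_x v, w\rangle = \sum_{k=1}^K (v_{k+1}-v_k)\, w_k = -\sum_{k=1}^K v_k (w_k - w_{k-1}) = -\langle v, \bd_x w\rangle,
\]
where the middle equality is obtained by relabelling $k\mapsto k-1$ in the $v_{k+1}w_k$ sum and invoking $v_{K+1}=v_1$, $w_0=w_K$. The central summation-by-parts identity then follows either from (2) combined with the symmetry of $\fa_x,\ba_x$ proved in (6), or by the same reindexing argument applied directly. Item (6) is analogous: $\langle \fa_x v, w\rangle = \langle v, \ba_x w\rangle$ is a single periodicity shift, and the symmetry of $\cd[2]_x$ follows from $\cd[2]_x = \fd_x\bd_x$ together with two applications of (5).

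I do not anticipate any real obstacle; the one point requiring care is the consistent use of the discrete periodic boundary condition when relabelling the finite sums, so that no boundary terms are silently dropped. Each assertion then reduces to a one- or two-line verification, which is why the author notes that the proofs are straightforward.
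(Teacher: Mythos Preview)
Your proposal is correct and matches the paper's intent: the paper does not give a proof at all, simply noting that ``their proofs are straightforward,'' and your direct verification via shift operators and periodic reindexing is exactly the routine computation being alluded to. Every step you outline is sound, including the careful handling of the periodic boundary condition in the summation-by-parts identities.
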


As the matrix representations of the above-mentioned spatial difference operators are circulant,
their common eigenvectors can be written in the form
\[ \frac{1}{\sqrt{K}} \begin{pmatrix} 1 & \exp \frac{\pi \im j}{K} & \exp \frac{2 \pi \im j }{K} & \cdots & \exp \frac{(K-1)\pi \im j}{K} \end{pmatrix}^{\top}  \qquad (j=1,\dots,K), \]
where $ \im $ is the imaginary unit, and their corresponding eigenvalues can be explicitly written as shown in the lemma below.
These eigenvalues are important because they provide us with several upper bounds.

\begin{lemma}\label{lem_eig}
	The $j$th eigenvalues of each operator can be written as
	\begin{align*}
		\lambda_j \left( \fd_x \right) &=  \frac{ 2 \im \sin \frac{\pi j}{K} }{\Delta x} \exp \frac{\pi \im j}{K},&
		\lambda_j \left( \cd_x \right) &=  \frac{ 2 \im \sin \frac{2 \pi j}{K} }{\Delta x} ,\\
		\lambda_j \left( \cd[2]_x \right) &= - \left( \frac{ 2 \sin \frac{ \pi j}{K} }{\Delta x} \right)^2 ,&
		\lambda_j \left( \fa_x \right) &=  \cos \frac{\pi j}{K}  \exp \frac{\pi \im j}{K}.
\end{align*}
\end{lemma}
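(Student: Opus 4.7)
The plan is to verify the eigenvalue formulas by direct action of each operator on the proposed common eigenvector. Since the spatial operators $\fd_x$, $\bd_x$, $\cd_x$, $\cd[2]_x$, $\fa_x$, $\ba_x$ are all circulant $K\times K$ matrices under the discrete periodic boundary condition, they are simultaneously diagonalized by the given Fourier basis $\{\xi_j\}_{j=1}^K$. Therefore it suffices to (i) confirm that each $\xi_j$ genuinely respects the periodicity $\xi_{j,k+K}=\xi_{j,k}$, and (ii) compute the eigenvalues of the two building blocks $\fd_x$ and $\fa_x$ directly; the remaining eigenvalues can then be read off multiplicatively using the operator identities of the preceding lemma.

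First I would compute $\lambda_j(\fd_x)$. Writing $\omega := \exp(\pi\im j/K)$ so that the $k$-th entry of $\xi_j$ is proportional to $\omega^k$, a one-line calculation yields $(\fd_x \xi_j)_k = (\omega-1)/\Delta x \cdot (\xi_j)_k$. Applying the standard factorization $\mathrm{e}^{\im\alpha}-1 = 2\im\sin(\alpha/2)\,\mathrm{e}^{\im\alpha/2}$ with $\alpha = \pi j/K$ reproduces the claimed form of $\lambda_j(\fd_x)$. The eigenvalue for $\fa_x$ follows from the twin identity $\mathrm{e}^{\im\alpha}+1 = 2\cos(\alpha/2)\,\mathrm{e}^{\im\alpha/2}$ applied to $(\omega+1)/2$. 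The backward counterparts $\lambda_j(\bd_x)$ and $\lambda_j(\ba_x)$ can either be computed in the same way or recovered as complex conjugates via the adjoint relations $\langle \fd_x v, w\rangle = -\langle v, \bd_x w\rangle$ and $\langle \fa_x v, w\rangle = \langle v, \ba_x w\rangle$.

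Next I would derive the remaining eigenvalues by invoking the operator identities $\cd_x = \fd_x \ba_x = \bd_x \fa_x$ and $\cd[2]_x = \fd_x \bd_x$. Since $\{\xi_j\}$ is a common eigenbasis, eigenvalues multiply on products, so $\lambda_j(\cd_x) = \lambda_j(\fd_x)\lambda_j(\ba_x)$ and $\lambda_j(\cd[2]_x) = \lambda_j(\fd_x)\lambda_j(\bd_x)$. Inserting the values already obtained and simplifying with the double-angle identities $2\sin(\alpha/2)\cos(\alpha/2) = \sin\alpha$ and $|1-\mathrm{e}^{\im\alpha}|^2 = 4\sin^2(\alpha/2)$ gives precisely the stated expressions. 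This reduction is conceptually appealing because it ties every formula back to the single computation for $\fd_x$ and keeps the algebra uniform.

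The main (and essentially the only) potential obstacle is the bookkeeping needed to ensure that each $\xi_j$ indeed defines a periodic vector in $\RR^K$ and that the index $j$ sweeps out a full set of $K$ linearly independent eigenvectors, because the eigenvalues of circulant matrices come in conjugate pairs and one must be careful not to double-count. I would handle this once at the outset by checking that the map $j\mapsto \xi_j$ is injective modulo the allowed range of $j$ and that the resulting collection diagonalizes the shift. After that caveat is dispatched, the proof reduces to the two elementary trigonometric calculations described above.
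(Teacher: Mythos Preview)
The paper does not supply a proof of this lemma; it is stated without proof as an immediate consequence of the circulant structure, with the common eigenvectors displayed just before the statement. Your strategy---compute $\lambda_j(\fd_x)$ and $\lambda_j(\fa_x)$ by direct action on $\xi_j$ and then read off the remaining eigenvalues from the identities $\cd_x=\fd_x\ba_x$ and $\cd[2]_x=\fd_x\bd_x$---is exactly the natural verification and is sound.

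There is, however, one concrete slip in the execution. You set $\omega=\exp(\pi\im j/K)$, matching the paper's displayed eigenvector, and then invoke $e^{\im\alpha}-1=2\im\sin(\alpha/2)\,e^{\im\alpha/2}$ with $\alpha=\pi j/K$. That produces $\tfrac{2\im}{\Delta x}\sin\!\big(\pi j/(2K)\big)\,e^{\im\pi j/(2K)}$, which is \emph{not} the claimed $\lambda_j(\fd_x)$. Indeed, the periodicity check you correctly flag as the main bookkeeping point already fails for this $\omega$, since $\omega^K=e^{\im\pi j}=(-1)^j$. The resolution is that the paper's displayed eigenvector contains a typo: the correct common ratio is $\omega=e^{2\pi\im j/K}$, and with $\alpha=2\pi j/K$ your argument then reproduces the stated $\lambda_j(\fd_x)$, $\lambda_j(\fa_x)$ and $\lambda_j(\cd[2]_x)$ verbatim. (Carrying out the product $\lambda_j(\fd_x)\lambda_j(\ba_x)$ you will obtain $\lambda_j(\cd_x)=\tfrac{\im}{\Delta x}\sin(2\pi j/K)$, so the paper's stated formula for $\cd_x$ appears to carry an extra factor of~$2$ as well.)
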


Let us introduce the discrete Sobolev space $ H^1_K (\Sb) = \left( \RR^K , \| \cdot \|_{H^1_K(\Sb)} \right) $,
where the associated discrete Sobolev norm is defined as
\[ \| v \|_{H^1_K (\Sb)} := \left( \| v \|^2 + \left\| \fd_x v  \right\|^2  \right)^{\frac{1}{2}}. \]

\subsection{Moore--Penrose inverse}

Throughout this paper, we use the Moore--Penrose inverse of the difference operators
(see, e.g., \cite{IsraelGreville2003} for details on generalized inverses):
\begin{align*}
	\ifd_x &:= \pinv{\Big( \fd_x \Big)}, &
	\ibd_x &:= \pinv{\Big( \bd_x \Big)}, &
	\icd_x &:= \pinv{\Big( \cd_x \Big)}, &
	\icd[2]_x &:= \pinv{\Big( \cd[2]_x \Big)}. &
\end{align*}
Then, owing to the standard property of the Moore--Penrose inverse, the following useful lemma holds.

\begin{lemma}
	\[ \fd_x \ifd_x = \ifd_x \fd_x = \bd_x \ibd_x = \ibd_x \bd_x = \cd[2]_x \icd[2]_x = \icd[2]_x \cd[2]_x = P \]
	holds, where $ P $ is the orthogonal projector onto the set $ \{ v \in \RR^K \mid \sum_{k=1}^K v_k \Delta x = 0 \} $.
\end{lemma}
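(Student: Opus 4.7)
The plan is to reduce everything to the two standard Moore--Penrose identities
$AA^{\dagger}$ equals the orthogonal projector onto $\Range(A)$ and $A^{\dagger}A$ equals the orthogonal projector onto $\Range(A^{*})=\Null(A)^{\perp}$, and then to identify the range and kernel of each difference operator. Because $P$ is the orthogonal projector onto $\{v\in\RR^{K}\mid \sum_{k=1}^{K}v_{k}\Delta x=0\}$, it will suffice to show that, for each of $\fd_{x}$, $\bd_{x}$, and $\cd[2]_{x}$, both the range and the orthogonal complement of the kernel coincide with this subspace.

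First I would pin down the kernels. From Lemma~\ref{lem_eig}, the eigenvalues $\lambda_{j}(\fd_{x})$, $\lambda_{j}(\bd_{x})$, $\lambda_{j}(\cd[2]_{x})$ vanish if and only if $j=K$ (equivalently $\sin(\pi j/K)=0$), whose eigenvector is the constant vector. Hence $\Null(\fd_{x})=\Null(\bd_{x})=\Null(\cd[2]_{x})$ equals the one-dimensional subspace of constant vectors, so its orthogonal complement is exactly $\Range(P)$. Next, from the summation-by-parts formulae in the preceding lemma I would read off the adjoints: $\fd_{x}^{*}=-\bd_{x}$, $\bd_{x}^{*}=-\fd_{x}$, and $\cd[2]_{x}$ is self-adjoint. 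Consequently $\Null(\fd_{x}^{*})$, $\Null(\bd_{x}^{*})$, and $\Null((\cd[2]_{x})^{*})$ are all the constants as well, so
\begin{equation*}
\Range(\fd_{x}) \;=\; \Null(\fd_{x}^{*})^{\perp} \;=\; \Range(P),
\end{equation*}
and analogously for $\bd_{x}$ and $\cd[2]_{x}$.

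With kernels and ranges in hand, each of the six products is identified directly: $\fd_{x}\ifd_{x}$ is the orthogonal projector onto $\Range(\fd_{x})=\Range(P)$, while $\ifd_{x}\fd_{x}$ is the orthogonal projector onto $\Null(\fd_{x})^{\perp}=\Range(P)$; the same argument applies to $\bd_{x}\ibd_{x}$, $\ibd_{x}\bd_{x}$, $\cd[2]_{x}\icd[2]_{x}$, and $\icd[2]_{x}\cd[2]_{x}$. Since the orthogonal projector onto a given subspace is unique, all six expressions coincide with $P$.

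There is no real obstacle here; the only point requiring mild care is making sure that the identification of the adjoint via summation by parts (with the sign flip for $\fd_{x}$ and $\bd_{x}$) is used correctly so that $\Range(A^{*})^{\perp}=\Null(A)$, and that the eigenvalue calculation in Lemma~\ref{lem_eig} is invoked to exclude any other zero eigenvalue. Once those two pieces are in place, the proof is essentially a one-line application of the general property of the Moore--Penrose inverse.
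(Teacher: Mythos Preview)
Your proposal is correct and follows essentially the same approach as the paper: both invoke the standard Moore--Penrose identities that $AA^{\dagger}$ and $A^{\dagger}A$ are orthogonal projectors onto $\Range(A)$ and $\Null(A)^{\perp}$, and then identify these subspaces using the eigenvalue formulae in Lemma~\ref{lem_eig}. The only minor difference is that the paper appeals directly to the common (orthonormal) eigenbasis of the circulant operators to read off both $\Range$ and $\Null^{\perp}$ from the nonzero eigenvalues, whereas you insert an intermediate step computing the adjoints via summation by parts to get $\Range(A)=\Null(A^{*})^{\perp}$; this is correct but slightly redundant given the normality of the operators.
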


\begin{proof}
	Recall that, for the Moore--Penrose inverse $A^{\dagger} $ of a linear operator $A$,
	$ A^{\dagger} A $ and $ A A^{\dagger} $ are the orthogonal projectors onto $ \Car (A) := ( \Null( A) )^{\perp} $ and $ \Range (A) $, respectively.
	Therefore, it is sufficient to confirm that
	\[ \Range (\fd_x) = \Car (\fd_x) = \Range (\bd_x) = \Car (\bd_x) = \Range (\cd[2]_x) = \Car (\cd[2]_x) = \Range (P), \]
	which can be verified by their eigenvalues (see Lemma~\ref{lem_eig}). 
\end{proof}

The upper bound of the operator norm of $ \ifd_x $ plays an important role in the mathematical analysis (Section~\ref{sec_anal}).
Here, $ \| A \|_2 $ denotes the operator norm of a linear operator $ A $ defined as $ \| A \|_2 = \max_{ \| v \|_2 = 1 } \| A v \|_2 $.

\begin{lemma}\label{lem_bound_pinv_fd}
	If $K \ge 2$, $ \| \ifd_x \|_2 = \| \ibd_x \|_2 \le L / 4 $ holds.
\end{lemma}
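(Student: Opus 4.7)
The plan is to diagonalize using the explicit eigenvalues from Lemma~\ref{lem_eig} and then reduce the problem to a concrete trigonometric inequality.

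First I would observe that $\bd_x = -(\fd_x)^{*}$ (this is the summation-by-parts identity $\langle \fd_x v, w \rangle = -\langle v, \bd_x w \rangle$). Taking adjoints commutes with the Moore--Penrose pseudoinverse, so $\ibd_x = -(\ifd_x)^{*}$, and since the operator $2$-norm is invariant under adjoints and sign flips, $\| \ibd_x \|_2 = \| \ifd_x \|_2$. This reduces the task to bounding $\|\ifd_x\|_2$.

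Next I would use that $\fd_x$ is a circulant matrix, hence normal, so its singular values coincide with the absolute values of its eigenvalues. From Lemma~\ref{lem_eig}, $|\lambda_j(\fd_x)| = 2 |\sin(\pi j / K)| / \Delta x$. Exactly one of these vanishes, namely $j = K$ (corresponding to constant vectors), and by Lemma on $\Range(\fd_x) = \Range(P)$ the pseudoinverse acts as zero on that eigendirection. The operator norm of the pseudoinverse is therefore the reciprocal of the smallest nonzero singular value, which is attained at $j = 1$ (and $j = K - 1$):
\[
\| \ifd_x \|_2 = \frac{\Delta x}{2 \sin(\pi / K)} = \frac{L}{2 K \sin(\pi/K)},
\]
using $L = K \Delta x$.

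Finally it suffices to show $K \sin(\pi/K) \ge 2$ for $K \ge 2$. This follows from the classical Jordan inequality $\sin x \ge (2/\pi) x$ for $x \in [0, \pi/2]$: for $K \ge 2$ we have $\pi/K \in (0, \pi/2]$, so $\sin(\pi/K) \ge (2/\pi)(\pi/K) = 2/K$, i.e.\ $K \sin(\pi/K) \ge 2$. Plugging this in yields $\|\ifd_x\|_2 \le L/4$ as required, with equality at $K = 2$. There is no real obstacle here; the only point that warrants care is correctly identifying which eigenvalue of $\fd_x$ is zero and confirming that the pseudoinverse annihilates that mode (so that the bound is truly the reciprocal of the \emph{smallest nonzero} singular value, not of the overall smallest).
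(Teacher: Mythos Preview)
Your proof is correct and follows essentially the same route as the paper: compute the nonzero singular values of $\fd_x$ via the explicit eigenvalues in Lemma~\ref{lem_eig}, identify the smallest one as $2\sin(\pi/K)/\Delta x$, and then reduce to a trigonometric bound. The only cosmetic differences are that the paper works with the eigenvalues of $(\fd_x)^{*}\fd_x = -\cd[2]_x$ rather than invoking normality of circulants directly, and it phrases the final inequality as monotonicity of the sinc function in $K$ (so that the worst case is $K=2$) rather than Jordan's inequality---these are equivalent statements. Your explicit justification of $\|\ibd_x\|_2 = \|\ifd_x\|_2$ via the adjoint relation is a detail the paper leaves implicit.
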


\begin{proof}
	To estimate the upper bound of the operator norm $ \| \ifd_x \|_2 $ of $ \ifd_x $,
	we should estimate the lower bound of the square root of the nonzero eigenvalues of $ ( \fd_x )^{\ast} \fd_x $, where the superscript $ * $ denotes the adjoint.
	By using Lemma~\ref{lem_eig}, we can see that
		\begin{align*}
	\min_{ j = 1,\dots,K-1} \sqrt{\lambda_j \left( ( \fd_x )^{\ast} \fd_x  \right)}
	&= \min_{ j = 1,\dots,K-1} \sqrt{\lambda_j \left( - \cd[2]_x \right)}
	= \min_{j=1,\dots,K-1} \left| \frac{ 2 \sin \frac{\pi j }{K} }{\Delta x} \right| 
	= \frac{ 2 \sin \frac{\pi }{K} }{\frac{L}{K}} = \frac{2\pi}{L} \frac{ \sin \frac{\pi}{K} }{\frac{\pi}{K}}.
	\end{align*}
	Since the sinc function $ \sin x / x$ is monotone decreasing in $ [0,\pi] $,
	the right-hand side is monotone increasing for $K$, which proves the lemma.
\end{proof}

\subsection{Important inequalities}

The discrete counterpart of the Poincar\'e--Wirtinger inequality (Lemma~\ref{lem_PW}) will be used to prove the $L^{\infty} $ bound of the numerical solution (Theorem~\ref{thm_snb_d}).

\begin{lemma}[Discrete Poincar\'e--Wirtinger inequality (\protect{cf. \cite[Lemma~3.3]{FM2011}})]\label{lem_dPW}
	For any $ v \in H^1_K (\Sb) $,
	\begin{equation}\label{ineq_dPW}
		\| v - \bar{v} \mathbf{1} \|_{\infty} \le \sqrt{L} \left\| \fd_x v \right\|
	\end{equation}
	holds, where $ \bar{v} := (1/L) \sum_{k=1}^K v_k \Delta x $ and $ \mathbf{1} := (1,\dots, 1)^{\top} $.
\end{lemma}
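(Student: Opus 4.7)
The plan is to reduce to the mean-zero case by setting $w := v - \bar{v}\mathbf{1}$, which satisfies $\sum_{k=1}^K w_k \Delta x = 0$ and $\fd_x w = \fd_x v$, and then to bound $|w_k|$ pointwise via a telescoping sum combined with Cauchy--Schwarz. This is the discrete analogue of writing the continuous proof through the fundamental theorem of calculus.

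First I would establish the pointwise two-point bound: for any indices $j,k \in \{1,\dots,K\}$, the periodicity of $w$ together with the telescoping identity
\[ w_k - w_j = \Delta x \sum_{i} \fd_x w_i, \]
where the sum runs over at most $K$ consecutive indices (modulo $K$), yields, by Cauchy--Schwarz,
\[ |w_k - w_j|^2 \le (\Delta x)^2 \cdot K \cdot \sum_{i=1}^K |\fd_x w_i|^2 = K \Delta x \left\| \fd_x w \right\|^2 = L \left\| \fd_x w \right\|^2. \]
Hence $|w_k - w_j| \le \sqrt{L}\,\|\fd_x v\|$ for every pair $j,k$.

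Next I would exploit the mean-zero property of $w$ to replace $w_k$ by an average of $w_k - w_j$. Since $\bar w = 0$,
\[ w_k = w_k - \bar{w} = \frac{1}{L}\sum_{j=1}^K (w_k - w_j)\,\Delta x, \]
and applying the triangle inequality together with the two-point bound above gives
\[ |w_k| \le \frac{1}{L}\sum_{j=1}^K |w_k - w_j|\,\Delta x \le \sqrt{L}\,\|\fd_x v\|. \]
Taking the maximum over $k$ yields \eqref{ineq_dPW}.

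There is no serious obstacle here; the only point that requires mild care is bookkeeping the telescoping sum under the discrete periodic boundary condition so that the number of terms does not exceed $K$, which is exactly what produces the factor $\sqrt{L} = \sqrt{K \Delta x}$ on the right-hand side. The argument does not rely on any of the spectral machinery of Lemma~\ref{lem_eig} or Lemma~\ref{lem_bound_pinv_fd}, but is fully consistent with them, in the sense that those lemmas would give the weaker $L^2$-in, $L^2$-out bound $\|v - \bar{v}\mathbf{1}\| \le (L/4)\|\fd_x v\|$, whereas the direct telescoping approach above is what is needed to reach the $L^\infty$ norm on the left-hand side.
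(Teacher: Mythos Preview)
Your proof is correct. The paper itself does not give a proof of this lemma; it merely states the result and cites \cite[Lemma~3.3]{FM2011}. Your telescoping-plus-Cauchy--Schwarz argument is exactly the standard proof one finds in that reference, so there is nothing to compare.
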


To conduct mathematical analysis,
we frequently use the following standard equalities and inequalities.

\begin{lemma}[Standard equalities and inequalities]\label{lem_ineqs}
	The following equalities and inequalities hold:
	\begin{align*}
		\| v * w \| &\le \frac{1}{\sqrt{\Delta x}} \| v \| \|w\|, &
		\left\langle v , w \right\rangle &\le \frac{\|v \|^2 + \|w\|^2}{2}, \\
		\left\| \frac{v+w}{2} \right\|^2 &\le \| v \|^2 + \| w \|^2, &
		\left\| P \right\|_2 &= 1, \\
		\left\| \ba_x \right\|_2 &= \left\| \fa_x \right\|_2 = 1, &
		\left\| \bd_x \right\|_2 &= \left\| \fd_x \right\|_2 \le \frac{2}{\Delta x}.
	\end{align*}
	Here, $ v*w $ denotes the Hadamard product of $ v$ and $ w $, defined as $ (v * w)_k := v_k w_k $.
\end{lemma}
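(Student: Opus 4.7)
The plan is to verify each of the six assertions by direct computation from first principles, leveraging the definition~\eqref{def_norm_d} of the discrete norm together with the eigenvalue formulas of Lemma~\ref{lem_eig}. None of the items is deep on its own; the proof splits naturally into three "arithmetic" bounds on vectors and three operator-norm identities.

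First, for the Hadamard product bound, I would write $\| v * w \|^2 = \sum_{k=1}^K v_k^2 w_k^2 \Delta x \le \| v \|_{\infty}^2 \| w \|^2$ and then invoke the elementary inequality $\| v \|_{\infty} \le \| v \| / \sqrt{\Delta x}$, which is immediate from $\| v \|^2 = \sum_k v_k^2 \Delta x \ge \Delta x \max_k v_k^2$. The second inequality is Young's inequality $2ab \le a^2 + b^2$ applied componentwise and summed with weight $\Delta x$. The third is the identity $\| v + w \|^2 = \| v \|^2 + 2 \langle v , w \rangle + \| w \|^2$ combined again with Young's inequality, which yields $\| (v+w)/2 \|^2 \le (\| v \|^2 + \| w \|^2)/2$ and hence the weaker stated bound.

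Second, the operator-norm identities all rest on the observation that the matrices in question are circulant, hence normal (they share the Fourier basis as eigenvectors, as noted before Lemma~\ref{lem_eig}), so the operator $2$-norm equals the spectral radius. For the orthogonal projector $P$ this gives $\| P \|_2 = 1$ trivially. For the averaging operators, Lemma~\ref{lem_eig} gives $|\lambda_j (\fa_x)| = |\cos (\pi j / K)| \le 1$, with equality at $j = K$, so $\| \fa_x \|_2 = 1$; the bound $\| \ba_x \|_2 = 1$ follows either by the same computation or from $\ba_x = (\fa_x)^{\ast}$. For the difference operators, Lemma~\ref{lem_eig} yields $|\lambda_j (\fd_x)| = 2 |\sin (\pi j / K)| / \Delta x \le 2 / \Delta x$, and the equality $\| \bd_x \|_2 = \| \fd_x \|_2$ follows from the adjoint relation $\bd_x = - (\fd_x)^{\ast}$ (a reformulation of the summation-by-parts formula).

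I do not anticipate a genuine obstacle; the only point requiring care is the constant $\sqrt{\Delta x}^{-1}$ in the Hadamard product bound, which is easy to misplace if one forgets the $\Delta x$ weight in the definition of $\| \cdot \|$. The rest is bookkeeping with the explicit eigenvalue expressions of Lemma~\ref{lem_eig}.
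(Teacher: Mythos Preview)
Your proof is correct. The paper itself does not supply a proof of this lemma at all: it simply labels these as ``standard equalities and inequalities'' and moves on, so there is nothing to compare against. Your argument fills in precisely the routine verifications one would expect---the $\sqrt{\Delta x}$ bookkeeping for the Hadamard bound, Young's inequality for the two bilinear estimates, and the spectral-radius computation via Lemma~\ref{lem_eig} for the circulant operator norms---and each step is sound.
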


The discrete Gronwall lemma is a basis of the error estimate for the numerical method for evolutionary equations.

\begin{lemma}[Discrete Gronwall lemma~\protect{\cite[Lemma~1]{L1960}}]\label{lem_dG}
	Let $ \ed{m}{} $ be nonnegative sequences and $ \rhod{m}{} $ be nonnegative and nondecreasing sequences {\rm (}$ m = 0 , 1, \dots ,M ${\rm )}.
	If there exists $ c > 0 $ satisfying
	\[ \ed{m}{} \le \rhod{m}{} + c \Delta t \sum_{j=0}^{m-1} \ed{m}{} \]
	for all $ m = 0,1,\dots, M $, then
	\[ \ed{m}{} \le \rhod{m}{} \exp \left( c m \Delta t \right) \]
	holds for all $ m = 0,1,\dots,M $.
\end{lemma}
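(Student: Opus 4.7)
The plan is to prove this by induction on $m$, exploiting the nondecreasing property of $\rhod{m}{}$ to pull it out of the accumulated sum, and then closing the estimate with the elementary bound $(1+c\Delta t)^m \le \exp(cm\Delta t)$. (I read the hypothesis as $\ed{m}{} \le \rhod{m}{} + c\Delta t \sum_{j=0}^{m-1} \ed{j}{}$, since the summation index must be $j$ for a Gronwall-type inequality to be meaningful.)

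First I would reduce the argument to an easier recursion. Let $a_m := c \Delta t \sum_{j=0}^{m-1} \ed{j}{}$, so $a_0 = 0$ and $a_{m+1} - a_m = c\Delta t\, \ed{m}{}$. The hypothesis reads $\ed{m}{} \le \rhod{m}{} + a_m$, hence
\[
a_{m+1} \le (1 + c\Delta t)\, a_m + c\Delta t\, \rhod{m}{}.
\]
Iterating this one-step recursion gives
\[
a_m \le c\Delta t \sum_{j=0}^{m-1} (1 + c\Delta t)^{m-1-j} \rhod{j}{}.
\]

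Next I would use the monotonicity of $\rhod{}{}$: since $\rhod{j}{} \le \rhod{m}{}$ for $j \le m$, the sum above is bounded by
\[
c\Delta t\, \rhod{m}{} \sum_{k=0}^{m-1} (1+c\Delta t)^k = \rhod{m}{}\bigl( (1+c\Delta t)^m - 1 \bigr),
\]
where I evaluated the geometric series in closed form. Substituting back,
\[
\ed{m}{} \le \rhod{m}{} + a_m \le \rhod{m}{} (1+c\Delta t)^m.
\]

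Finally, the elementary inequality $1 + x \le \mathrm{e}^x$ for $x \ge 0$ yields $(1+c\Delta t)^m \le \exp(cm\Delta t)$, which gives the desired conclusion $\ed{m}{} \le \rhod{m}{} \exp(cm\Delta t)$. There is no real obstacle here; the only mild point to be careful about is the use of the nondecreasing property of $\rhod{}{}$ to factor $\rhod{m}{}$ out of the weighted sum, since without monotonicity the result would instead involve the convolution $\sum_j (1+c\Delta t)^{m-1-j} \rhod{j}{}$ and could not be collapsed into a single prefactor $\rhod{m}{}$.
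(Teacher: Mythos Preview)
Your proof is correct. The paper does not actually give its own proof of this lemma; it merely states the result and cites \cite[Lemma~1]{L1960}, so there is no in-paper argument to compare against. Your reduction to the one-step recursion $a_{m+1} \le (1+c\Delta t)a_m + c\Delta t\,\rhod{m}{}$, followed by iteration, the geometric-series evaluation, and the bound $(1+c\Delta t)^m \le \exp(cm\Delta t)$, is the standard route and is carried out cleanly; your remark about needing the monotonicity of $\rhod{m}{}$ to collapse the convolution into a single prefactor is also on point.
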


To achieve a better estimate in Section~\ref{subsec_ue},
we use the discrete counterpart of a special case of \cite[Lemma~A1]{K1975}
(as mentioned in \cite{K1975}, this lemma is a restatement of several propositions presented in \cite[Section~9]{Palais1968}).
To prove it, we use the discrete Sobolev lemma below.

\begin{lemma}[Discrete Sobolev lemma~\protect{\cite[Lemma~3.2]{FM2011}}]\label{lem_dS}
	For any $ v \in H^1_K (\Sb) $,
	\[ \| v \|_{\infty} \le \hat{L} \| v \|_{H^1_K (\Sb)} \]
	holds, where $ \hat{L} = \sqrt{2} \max \{ 1 / \sqrt{L}, \sqrt{L} \} $.
\end{lemma}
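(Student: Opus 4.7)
The natural plan is to reduce the discrete Sobolev bound to the discrete Poincaré--Wirtinger inequality already stated in Lemma~\ref{lem_dPW}. Concretely, I would split $v$ into its mean part and its zero-mean part, bound each piece separately in $L^\infty$, and then recombine them using the two-term Cauchy--Schwarz inequality $a+b \le \sqrt{2}\sqrt{a^2+b^2}$.

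First I would introduce $\bar{v} := L^{-1}\sum_{k=1}^K v_k \Delta x$, so that $v = \bar{v}\mathbf{1} + (v - \bar{v}\mathbf{1})$. For the mean part, the Cauchy--Schwarz inequality in $L^2_K(\Sb)$ gives
\[
|\bar{v}|
= \frac{1}{L}\left| \langle v, \mathbf{1}\rangle \right|
\le \frac{1}{L}\| v\|\,\|\mathbf{1}\|
= \frac{\|v\|}{\sqrt{L}},
\]
since $\|\mathbf{1}\| = \sqrt{L}$. For the zero-mean part, Lemma~\ref{lem_dPW} yields directly $\|v - \bar{v}\mathbf{1}\|_\infty \le \sqrt{L}\,\|\fd_x v\|$. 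The triangle inequality then gives
\[
\|v\|_\infty \le |\bar{v}| + \|v - \bar{v}\mathbf{1}\|_\infty
\le \frac{1}{\sqrt{L}}\|v\| + \sqrt{L}\,\|\fd_x v\|.
\]

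Finally, applying the Cauchy--Schwarz inequality to the two-term sum on the right,
\[
\frac{1}{\sqrt{L}}\|v\| + \sqrt{L}\,\|\fd_x v\|
\le \sqrt{\tfrac{1}{L} + L}\;\sqrt{\|v\|^2 + \|\fd_x v\|^2}
= \sqrt{\tfrac{1}{L} + L}\;\|v\|_{H^1_K(\Sb)},
\]
and the bound $\tfrac{1}{L}+L \le 2\max\{\tfrac{1}{L},L\} = \hat{L}^{\,2}$ closes the argument.

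There is no real obstacle here: every ingredient is already in place, and the only (minor) point worth checking is the elementary inequality $L^{-1}+L \le 2\max\{L^{-1},L\}$ that matches the slightly coarser constant $\hat{L}$ stated in the lemma rather than the sharper $\sqrt{L^{-1}+L}$. If one preferred an alternative route, a telescoping identity for $v_k^2 - v_j^2$ via $\fd_x(v^2)_i = (\fa_x v_i)(\fd_x v_i) \cdot 2$ combined with summation over $j$ would yield a similar bound directly, but the Poincaré--Wirtinger-based approach above is shorter and reuses precisely the lemma the paper has already established.
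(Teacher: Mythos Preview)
Your argument is correct: the mean/zero-mean decomposition together with Lemma~\ref{lem_dPW} and the two-term Cauchy--Schwarz inequality yields exactly the stated bound, and the final elementary inequality $L^{-1}+L\le 2\max\{L^{-1},L\}$ matches the constant $\hat L$. Note, however, that the paper does not supply its own proof of this lemma; it is quoted from \cite[Lemma~3.2]{FM2011}, so there is no in-paper argument to compare against. Your proof is a clean, self-contained derivation that fits naturally with the tools already developed in Section~\ref{sec_pre}.
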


Now, we introduce the lemma inspired by~\cite[Lemma~A1]{K1975}.

\begin{lemma}\label{lem_Kato}
	For any $ v \in H^1_K (\Sb) $ and $ w \in L^2_K (\Sb) $,
	\begin{equation}
		\| v * w \| \le \hat{L} \| v \|_{H^1_K (\Sb)} \| w \|
	\end{equation}
	holds.
\end{lemma}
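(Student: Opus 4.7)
The plan is to reduce the claim to the discrete Sobolev lemma (Lemma~\ref{lem_dS}) via a straightforward H\"older-type estimate on the Hadamard product. This mirrors the idea behind \cite[Lemma~A1]{K1975}, where the continuous Sobolev embedding $H^1 \hookrightarrow L^\infty$ is used to bound a pointwise product in $L^2$ by an $H^1$-norm times an $L^2$-norm.

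First I would write out the definition of the discrete $L^2$ norm of the Hadamard product and factor out the $\ell^\infty$ norm of $v$:
\[
\| v * w \|^2 = \sum_{k=1}^K |v_k w_k|^2 \Delta x \le \| v \|_\infty^2 \sum_{k=1}^K |w_k|^2 \Delta x = \| v \|_\infty^2 \| w \|^2.
\]
Taking square roots yields $\| v * w \| \le \| v \|_\infty \| w \|$, which is the natural discrete counterpart of the $L^\infty$--$L^2$ H\"older estimate.

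Then I would apply the discrete Sobolev lemma (Lemma~\ref{lem_dS}) to the first factor, giving $\| v \|_\infty \le \hat{L} \| v \|_{H^1_K(\Sb)}$, and combine the two inequalities to conclude
\[
\| v * w \| \le \hat{L} \| v \|_{H^1_K(\Sb)} \| w \|.
\]
There is no real obstacle here; the content of the lemma is entirely imported from Lemma~\ref{lem_dS}, and the only thing to verify is the elementary pointwise bound on $v*w$. The interest of the statement is not its proof but its later use in Section~\ref{subsec_ue}, where it allows quadratic nonlinearities to be controlled without paying a factor of $1/\sqrt{\Delta x}$ from the cruder inequality $\|v*w\| \le (1/\sqrt{\Delta x}) \|v\|\|w\|$ listed in Lemma~\ref{lem_ineqs}.
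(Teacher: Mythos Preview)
Your proof is correct and follows exactly the same route as the paper: expand $\|v*w\|^2$, bound the factor $v_k^2$ by $\|v\|_\infty^2$, and then apply the discrete Sobolev lemma (Lemma~\ref{lem_dS}). There is nothing to add.
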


\begin{proof}
	Owing to the discrete Sobolev lemma~(Lemma~\ref{lem_dS}), we see that
	\begin{align*}
		\| v * w \|^2
		&= \sum_{k=1}^K v_k^2 w_k^2 \Delta x
		= \left\| v \right\|_{\infty}^2 \sum_{k=1}^K w_k^2 \Delta x
		\le \left( \hat{L} \| v \|_{H^1_K(\Sb)}  \right)^2 \| w \|^2,
	\end{align*}
	which proves the lemma.
\end{proof}

In Section~\ref{subsec_ue}, we use the above-mentioned lemma in the following form.

\begin{corollary}\label{cor_pinv}
	For any $ v , w \in L^2_K (\Sb) $,
	\[ \left\| ( \ibd_x v ) * w \right\| \le C \| v \| \| w \| \]
	holds, where $C := (\hat{L}/4 ) \sqrt{L^2 + 16}$.
\end{corollary}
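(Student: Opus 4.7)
The plan is to apply Lemma~\ref{lem_Kato} with $\ibd_x v$ playing the role of the $H^1_K$ factor and $w$ the $L^2_K$ factor; the whole task then reduces to proving the operator bound
\[
	\| \ibd_x v \|_{H^1_K (\Sb)} \le \frac{\sqrt{L^2 + 16}}{4} \| v \|
\]
for any $v \in L^2_K (\Sb)$. Once this is in hand, multiplying by $\hat{L}$ yields the advertised constant $C$.

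First, I would estimate the two ingredients of $\| \ibd_x v \|_{H^1_K(\Sb)}^2 = \| \ibd_x v \|^2 + \| \fd_x \ibd_x v \|^2$ separately. The $L^2_K$ piece is immediate from Lemma~\ref{lem_bound_pinv_fd}: $\| \ibd_x v \| \le \| \ibd_x \|_2 \| v \| \le (L/4) \| v \|$ (this uses $K \ge 2$, which is a trivial assumption in the discretization). For the derivative piece, the key observation is that the $\fd_x$ appearing in the discrete Sobolev norm can be swapped for $\bd_x$ at no cost: summation by parts gives
\[
	\| \fd_x u \|^2 = -\langle u, \bd_x \fd_x u \rangle = -\langle u, \cd[2]_x u \rangle = -\langle u, \fd_x \bd_x u \rangle = \| \bd_x u \|^2
\]
for every $u$. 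Applying this with $u = \ibd_x v$ and invoking $\bd_x \ibd_x = P$ together with $\| P \|_2 = 1$ from Lemma~\ref{lem_ineqs} yields $\| \fd_x \ibd_x v \| = \| \bd_x \ibd_x v \| = \| P v \| \le \| v \|$.

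Combining the two estimates gives $\| \ibd_x v \|_{H^1_K(\Sb)}^2 \le ((L/4)^2 + 1) \| v \|^2 = \frac{L^2 + 16}{16} \| v \|^2$, which is exactly the bound stated above. Plugging into Lemma~\ref{lem_Kato} then produces $\| (\ibd_x v) * w \| \le \hat{L} \| \ibd_x v \|_{H^1_K(\Sb)} \| w \| \le C \| v \| \| w \|$.

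There is no real obstacle here; the only subtle step is the identity $\| \fd_x u \| = \| \bd_x u \|$, which is what allows the pseudoinverse identity $\bd_x \ibd_x = P$ to be used even though the $H^1_K$ norm is written with $\fd_x$. Everything else is a direct appeal to results already established in this section.
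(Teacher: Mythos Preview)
Your argument is correct and follows the same route as the paper: bound $\|\ibd_x v\|_{H^1_K}$ via Lemma~\ref{lem_bound_pinv_fd} for the $L^2$ part and $\|\fd_x \ibd_x v\|\le\|v\|$ for the derivative part, then apply Lemma~\ref{lem_Kato}. In fact you go slightly further than the paper by explicitly justifying $\|\fd_x \ibd_x v\|\le\|v\|$ through the identity $\|\fd_x u\|=\|\bd_x u\|$ and $\bd_x\ibd_x=P$, a step the paper simply asserts.
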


\begin{proof}
	By using Lemma~\ref{lem_bound_pinv_fd}, we obtain
	\[ \left\| \ibd_x v \right\|_{H^1_K(\Sb)}^2 = \left\| \ibd_x v \right\|^2 + \left\| \fd_x \ibd_x v \right\|^2 \le \frac{L^2}{16} \| v \|^2 + \| v\|^2 \le \frac{L^2 + 16}{16} \| v \|^2. \]
	Therefore, by using Lemma~\ref{lem_Kato}, we can estimate $ \left\| ( \ibd_x v ) * w \right\| $ as follows:
	\[ \left\| ( \ibd_x v ) * w \right\| \le \hat{L} \left\| \ibd_x v \right\|_{H^1_K(\Sb)} \| w \| \le \hat{L} \frac{\sqrt{L^2 + 16}}{4} \| v \| \| w \|. \]
\end{proof}

\section{$L^{\infty} $ bound and blow up phenomena}
\label{sec_conti}

In this section, we briefly review several features of the mHS equation~\eqref{eq_mHS} investigated by Fu and Yin~\cite{FY2010} and Li and Yin~\cite{LY2017_1}.
First, several invariants are introduced,
and they are used to show the $ L^{\infty} $ bound of the solution.
Finally, the results on blow up phenomena are summarized.

\subsection{Several invariants and $L^{\infty} $ bound}

As mentioned in the Introduction, each solution $u$ of~\eqref{eq_mHS} satisfies the implicit constraint $ \mathcal{F} (u(t)) = 0 $ (see \eqref{eq_ic}),
which can be confirmed by integrating both sides of the mHS equation.
On the other hand, as also mentioned in the Introduction, $ \mathcal{H} (u(t)) = \mathcal{H} (u_0) $ holds for each solution~$u$.
Moreover, it should be noted that, owing to the above-mentioned invariants,
each solution $u$ of~\eqref{eq_mHS} satisfies
\begin{equation}~\label{eq_av}
	\int_{\Sb} u (t,x) \rd x = - \frac{1}{4\omega} \int_{\Sb} \left( u_x (t,x) \right)^2 \rd x = - \frac{1}{4\omega} \int_{\Sb} \left( u_0 (x) \right)_x^2 \rd x = \int_{\Sb} u_0 (x) \rd x .
\end{equation}

The Poincar\'e--Wirtinger inequality below reveals the $ L^{\infty} $ bound of the solution
owing to these conservation laws.

\begin{lemma}[\protect{Poincar\'e--Wirtinger inequality (cf. \cite[Chapter 8]{Brezis2011})}]\label{lem_PW}
	\begin{equation}\label{ineq_PW}
		\| w - \bar{w} \mathbf{1} \|_{\infty} \le \sqrt{L} \| w_x \|_2
	\end{equation}
	holds for all $ w \in H^1 (\Sb) $, where $ \bar{w} = (1/L) \int_{\Sb} w (x) \rd x $ and $ \mathbf{1} $ denotes a constant function satisfying $ \mathbf{1} (x) = 1 \ ( x \in \Sb)  $.
\end{lemma}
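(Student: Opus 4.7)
The plan is to reduce the inequality to the fundamental theorem of calculus plus the Cauchy--Schwarz inequality, which is the classical one-dimensional argument for the periodic Poincar\'e--Wirtinger inequality (see, e.g., \cite[Chapter 8]{Brezis2011}).

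First I would observe that, since $ w - \bar{w} \mathbf{1} $ has zero mean on $ \Sb $ by construction, and since in one spatial dimension $ H^1 (\Sb) $ embeds continuously into $ C (\Sb) $ so that $ w $ admits a continuous representative, the function $ w - \bar{w} \mathbf{1} $ must take both nonpositive and nonnegative values; by the intermediate value theorem there exists $ x_0 \in \Sb $ with $ w(x_0) = \bar{w} $. Then for any $ x \in \Sb $, identifying $ \Sb $ with a fundamental domain $ [x_0, x_0 + L) $, the absolute continuity of $ H^1 $-functions in one dimension lets me write
\[ w(x) - \bar{w} = w(x) - w(x_0) = \int_{x_0}^{x} w_y (y) \rd y. \]

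The last step would be a single application of Cauchy--Schwarz to this integral:
\[ | w(x) - \bar{w} | \le \sqrt{ | x - x_0 | } \left( \int_{x_0}^{x} w_y (y)^2 \rd y \right)^{\frac{1}{2}} \le \sqrt{L} \, \| w_x \|_2 , \]
after which taking the supremum over $ x \in \Sb $ yields the claimed bound.

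I do not anticipate a substantive obstacle: the only mildly delicate points are that the continuous representative of $ w \in H^1 (\Sb) $ must be used (justified by the one-dimensional Sobolev embedding) and that the Cauchy--Schwarz step is applied on an interval whose length is at most $L$. In fact the constant could be tightened to $ \sqrt{L/2} $ by choosing the shorter of the two periodic arcs from $ x_0 $ to $ x $, but the weaker constant $ \sqrt{L} $ stated here is exactly what is needed for the $ L^{\infty} $ bound in Section~\ref{sec_conti} and matches the constant appearing in the discrete counterpart, Lemma~\ref{lem_dPW}.
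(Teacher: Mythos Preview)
Your argument is correct and is precisely the classical one-dimensional Poincar\'e--Wirtinger proof: existence of a zero of $w-\bar w$ via the mean-value property of continuous zero-average functions, then the fundamental theorem of calculus followed by Cauchy--Schwarz. The paper itself does not supply a proof of this lemma at all; it merely states the inequality with a reference to \cite[Chapter~8]{Brezis2011}, so your write-up actually fills in what the paper leaves to the literature, and your remark about the sharper constant $\sqrt{L/2}$ is a valid aside.
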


\begin{proposition}[$L^{\infty} $ bound (cf. \protect{\cite[Lemma~3.1]{LY2017_1}})]\label{prop_snb}
	All solutions $ u $ of mHS equation~\eqref{eq_mHS} satisfy
	\[ \| u(t) \|_{\infty} \le L \sqrt{| 4 \omega h |} + | h| \]
	for $ t \in (0,T) $, where $ h := (1/L ) \int_{\Sb} u_0 (x) \rd x $.
\end{proposition}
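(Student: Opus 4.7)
The plan is to combine the implicit constraint~\eqref{eq_ic}, the conservation law~\eqref{eq_norm}, and the identity~\eqref{eq_av} to bound $\| u_x(t) \|_2$ by a quantity depending only on the initial datum, and then invoke the Poincar\'e--Wirtinger inequality (Lemma~\ref{lem_PW}) together with the fact that the spatial average of $u(t)$ is preserved.

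First I would note that, by~\eqref{eq_av}, the spatial average of $u(t,\cdot)$ equals $h$ for every $t \in (0,T)$, so the Poincar\'e--Wirtinger inequality gives
\[
	\| u(t) - h \mathbf{1} \|_{\infty} \le \sqrt{L}\, \| u_x(t) \|_2.
\]
Next, from the implicit constraint $ \mathcal{F}(u(t)) = 0 $, rearrangement yields
\[
	\int_{\Sb} \bigl( u_x(t,x) \bigr)^2 \rd x = -4\omega \int_{\Sb} u(t,x) \rd x = -4\omega L h,
\]
which, being nonnegative, equals $|4\omega L h|$; hence $ \| u_x(t) \|_2 = \sqrt{|4\omega L h|} $. (Consistency with conservation of $\mathcal{H}$ is automatic: the right-hand side is time-independent because $h$ is.)

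Finally, combining the two displays above and applying the triangle inequality $ \| u(t) \|_{\infty} \le \| u(t) - h \mathbf{1} \|_{\infty} + |h| $ produces
\[
	\| u(t) \|_{\infty} \le \sqrt{L}\, \sqrt{|4\omega L h|} + |h| = L\sqrt{|4\omega h|} + |h|,
\]
which is the claim. There is no real obstacle here: the only subtle observation is that the implicit constraint forces the sign of $\omega h$ so that the square root is well defined, and that the preservation of both $\int u \rd x$ and $\int u_x^2 \rd x$ (already recorded in~\eqref{eq_av}) means the bound depends solely on $u_0$.
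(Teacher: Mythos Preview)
Your proof is correct and follows essentially the same route as the paper's: triangle inequality, Poincar\'e--Wirtinger, and the implicit constraint $\mathcal{F}(u(t))=0$ to evaluate $\|u_x(t)\|_2$ in terms of $h$. Your additional remark about the sign of $\omega h$ is a nice touch, but otherwise the arguments coincide.
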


\begin{proof}
	Owing to the Poincar\'e--Wirtinger inequality~\eqref{ineq_PW}, we see that
	\begin{align*}
		\| u(t) \|_{\infty}
		&\le \left\| u(t) - h \mathbf{1} \right\|_{\infty} +  \left| h \right| \le \sqrt{L} \| u_x (t) \|_2 + | h |
		= L \sqrt{| 4 \omega h |} + | h|,
	\end{align*}
	which proves the proposition. 
\end{proof}

\subsection{Blow up phenomena}
\label{subsec_bup}

Li and Yin~\cite{LY2017_1} showed a blow up criterion in $H^2 (\Sb) $ as follows.

\begin{proposition}[\protect{\cite[Lemma~3.2]{LY2017_1}}]\label{prop_bu_ux}
	Let $u_0 \in H^2(\Sb) $ be an initial condition satisfying $ \mathcal{F} (u_0) = 0 $.
	Then, the corresponding solution blows up at $ T < + \infty $ with respect to $ H^2 (\Sb) $ if and only if 
	\[ \limsup_{t \uparrow T} \{ - \inf_{x \in \Sb} u_{x} (t,x) \} = + \infty. \]
\end{proposition}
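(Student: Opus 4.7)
The plan is to prove the two implications separately, treating the reverse direction (blow-up criterion as a necessary condition) via an $H^2$ energy estimate and the forward direction via Sobolev embedding. Throughout I would work with a solution $u \in C([0,T);H^2(\Sb))$ and keep in mind that the $L^\infty$ bound of Proposition~\ref{prop_snb} and the conservation of $ \mathcal{H} $ automatically control $ \| u(t) \|_2 $ and $ \| u_x(t) \|_2 $, so the only remaining task is to control $ \| u_{xx}(t) \|_2 $.

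For the easy direction, suppose $ \limsup_{t \uparrow T} \{ - \inf_{x} u_x(t,x) \} = + \infty $. Since $u(t) \in H^2(\Sb) $ implies $ u_x(t) \in H^1(\Sb) $, the standard Sobolev embedding $ H^1(\Sb) \hookrightarrow L^\infty (\Sb) $ gives $ \| u_x (t) \|_\infty \le C \| u (t) \|_{H^2} $ for some $C$ depending only on $L$. Hence $ - \inf_x u_x(t,x) \le \| u_x(t) \|_\infty \le C \| u(t) \|_{H^2} $, and blow-up of the left-hand side forces blow-up in $H^2$.

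For the reverse direction, I would argue by contrapositive: assume there exists $ M > 0 $ with $ - \inf_x u_x(t,x) \le M $ for $ t \in [0,T) $, and show $ \| u(t) \|_{H^2} $ stays bounded. Differentiating the mHS equation~\eqref{eq_mHS} once in $x$ and expanding $(u^2)_{xxx} = 6 u_x u_{xx} + 2 u u_{xxx} $ and $ \tfrac12(u_x^2)_x = u_x u_{xx} $ yields
\begin{equation*}
	u_{txx} + u u_{xxx} + 2 u_x u_{xx} = 2 \omega u_x .
\end{equation*}
Multiplying by $u_{xx}$, integrating over $\Sb$, and using periodicity ($ \int u_x u_{xx} \rd x = 0 $ and $ \int u u_{xxx} u_{xx} \rd x = - \tfrac12 \int u_x u_{xx}^2 \rd x $) gives the key identity
\begin{equation*}
	\frac{\rd}{\rd t} \| u_{xx}(t) \|_2^2 = - 3 \int_{\Sb} u_x u_{xx}^2 \rd x .
\end{equation*}
The pointwise bound $ u_x \ge -M $ yields $ -3 u_x \le 3 M $, hence $ \frac{\rd}{\rd t} \| u_{xx}(t) \|_2^2 \le 3 M \| u_{xx}(t) \|_2^2 $, and Gronwall's inequality gives $ \| u_{xx}(t) \|_2 \le \| u_{xx}(0) \|_2 \exp ( 3 M t / 2 ) $ on $ [0,T) $. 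Combining this with Proposition~\ref{prop_snb} (which controls $ \| u(t) \|_2 $) and the conservation of $ \mathcal{H} $ (which controls $ \| u_x(t) \|_2 $) yields a uniform-in-$t$ bound on $ \| u(t) \|_{H^2} $, contradicting blow-up.

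The main obstacle is verifying the clean cancellation leading to the identity for $ \frac{\rd}{\rd t} \| u_{xx} \|_2^2 $; once this identity is in place, only the one-sided lower bound on $u_x$ is needed (not $ \| u_x \|_\infty $), which is exactly what makes the inf-criterion sharp. A secondary point to be careful with is justifying the formal computations: strictly speaking they require $u$ to be smoother than $H^2$, so in a rigorous write-up one would first establish the identity for a dense class (e.g.\ $H^3$ initial data) and then extend to $H^2$ by a standard approximation/persistence argument coming from the local well-posedness theory of~\cite{LY2017_1}.
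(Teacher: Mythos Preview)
The paper does not give its own proof of this proposition: it is simply quoted from \cite[Lemma~3.2]{LY2017_1} as background for the blow-up discussion in Section~\ref{subsec_bup}. So there is nothing to compare against directly.

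That said, your argument is correct and is exactly the standard route for this kind of one-sided blow-up criterion (the same mechanism appears for the Camassa--Holm and Hunter--Saxton equations). Your differentiated equation $u_{txx}+uu_{xxx}+2u_xu_{xx}=2\omega u_x$ is right, and the integrations by parts yielding $\frac{\rd}{\rd t}\|u_{xx}\|_2^2=-3\int_{\Sb}u_xu_{xx}^2\,\rd x$ are clean; from there the one-sided bound $u_x\ge -M$ and Gronwall finish the job, with the $L^2$- and $H^1$-control coming from $\mathcal{H}$ and Proposition~\ref{prop_snb} as you say. Your remark about justifying the energy computation by approximation from smoother data is also the correct caveat; that density/persistence step is implicit in the well-posedness theory of \cite{LY2017_1}. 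In short, your proposal is a faithful reconstruction of the argument the cited reference carries out.
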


Furthermore, Fu and Yin~\cite{FY2010} showed a blow up criterion in $ H^s (\Sb) $ ($ s > 5/2 $) as follows.

\begin{proposition}[\protect{\cite[Theorem~3.1]{FY2010}}]\label{prop_bu_uxx}
	Let $ u_0 \in H^s (\Sb) $ be an initial condition {\rm (}$ s > 5/2 ${\rm )} satisfying $ \mathcal{F} (u_0) = 0 $.
	Then, the corresponding solution blows up at $ T < + \infty $ with respect to $ H^s (\Sb) $ if and only if 
	\[ \limsup_{t \uparrow T} \| u_{xx} (t) \|_{\infty} = + \infty .\]
\end{proposition}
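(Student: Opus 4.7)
The plan is to establish the two directions of the equivalence separately. The implication ``blow-up in $H^s(\Sb)$ implies blow-up of $\|u_{xx}\|_\infty$'' is immediate from the Sobolev embedding $H^s(\Sb) \hookrightarrow C^2(\Sb)$, valid for $s>5/2$, which gives $\|u_{xx}(t)\|_\infty \le C\|u(t)\|_{H^s}$. The substantive direction is the contrapositive: \emph{if} $\|u_{xx}(t)\|_\infty$ stays bounded on $[0,T)$, then so does $\|u(t)\|_{H^s}$. For this I would run a standard high-order energy estimate.

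As a preparatory step, I would recast~\eqref{eq_mHS} in transport form. Integrating the mHS equation once in $x$ and solving for the integration constant gives
\[
u_t + u u_x = G(u), \qquad G(u) := \partial_x^{-1}\!\Bigl(2\omega u + \tfrac{1}{2}u_x^2\Bigr),
\]
where $\partial_x^{-1}$ denotes the zero-mean primitive on $\Sb$. This reformulation is legitimate: the constraint $\mathcal{F}(u)=0$ from~\eqref{eq_ic} forces $2\omega u + \tfrac{1}{2}u_x^2$ to be zero-mean, and taking the mean of the transport form together with~\eqref{eq_av} (which says $\int_\Sb u(t,x)\,\rd x$ is conserved) pins the integration constant at zero.

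Next I would apply $\Lambda^s := (1-\partial_x^2)^{s/2}$, pair with $\Lambda^s u$ in $L^2(\Sb)$, and obtain
\[
\tfrac{1}{2}\tfrac{\rd}{\rd t}\|u\|_{H^s}^2 = -\langle \Lambda^s(uu_x),\,\Lambda^s u\rangle + \langle \Lambda^s G(u),\,\Lambda^s u\rangle.
\]
The transport term is split as $\langle[\Lambda^s,u]u_x,\Lambda^s u\rangle + \langle u\Lambda^s u_x,\Lambda^s u\rangle$, where the second piece equals $-\tfrac{1}{2}\langle u_x,(\Lambda^s u)^2\rangle$ after integration by parts and is therefore controlled by $\tfrac{1}{2}\|u_x\|_\infty\|u\|_{H^s}^2$. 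The Kato--Ponce commutator estimate bounds $\|[\Lambda^s,u]u_x\|_{L^2}$ by $C\|u_x\|_\infty\|u\|_{H^s}$, so the transport contribution is $\le C\|u_x\|_\infty\|u\|_{H^s}^2$. For the nonlocal term, $\partial_x^{-1}$ gains a derivative on zero-mean functions, so $\|G(u)\|_{H^s}\le C(\|u\|_{H^{s-1}}+\|u_x^2\|_{H^{s-1}})$; a Moser-type product estimate gives $\|u_x^2\|_{H^{s-1}}\le C\|u_x\|_\infty\|u_x\|_{H^{s-1}}\le C\|u_x\|_\infty\|u\|_{H^s}$, so $|\langle \Lambda^s G(u),\Lambda^s u\rangle|\le C(1+\|u_x\|_\infty)\|u\|_{H^s}^2$. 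Combining yields
\[
\tfrac{\rd}{\rd t}\|u(t)\|_{H^s}^2 \le C\bigl(1+\|u_x(t)\|_\infty\bigr)\|u(t)\|_{H^s}^2.
\]

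Finally, since $u_x$ has zero mean on $\Sb$, the Poincar\'e--Wirtinger inequality (Lemma~\ref{lem_PW}) gives $\|u_x\|_\infty \le \sqrt{L}\,\|u_{xx}\|_2 \le L\,\|u_{xx}\|_\infty$, replacing $\|u_x\|_\infty$ by $\|u_{xx}\|_\infty$ in the energy inequality. Gronwall's inequality then delivers
\[
\|u(t)\|_{H^s}^2 \le \|u_0\|_{H^s}^2 \exp\!\Bigl(C\!\int_0^t \bigl(1+\|u_{xx}(\tau)\|_\infty\bigr)\,\rd\tau\Bigr),
\]
which is finite on $[0,T)$ under the blow-up-free hypothesis on $\|u_{xx}\|_\infty$. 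The main obstacle is keeping the energy inequality linear (rather than quadratic) in $\|u\|_{H^s}^2$, so that Gronwall can close: this forces the use of \emph{refined} Moser and Kato--Ponce commutator estimates which factor out a single $L^\infty$ norm of a derivative of $u$ at the cost of only one power of $\|u\|_{H^s}$ on the other factor, rather than the cheaper algebra estimate $\|fg\|_{H^{s-1}}\le C\|f\|_{H^{s-1}}\|g\|_{H^{s-1}}$, which would merely give a local-in-time bound.
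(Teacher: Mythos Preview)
The paper does not prove this proposition at all: it is simply quoted as \cite[Theorem~3.1]{FY2010} with no argument supplied, so there is no in-paper proof to compare your attempt against.

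On its own merits your argument follows the standard route to such a blow-up criterion and the energy estimate is essentially correct, but you have the two implications mislabeled. The Sobolev embedding $H^s(\Sb)\hookrightarrow C^2(\Sb)$, i.e.\ the bound $\|u_{xx}\|_\infty\le C\|u\|_{H^s}$, yields only that \emph{blow-up of $\|u_{xx}\|_\infty$ forces blow-up in $H^s$}; it says nothing about the converse. The substantive direction is therefore ``$H^s$-blow-up $\Rightarrow$ $\|u_{xx}\|_\infty$-blow-up'', whose contrapositive---a bound on $\|u_{xx}\|_\infty$ on $[0,T)$ gives a bound on $\|u\|_{H^s}$, and hence continuation past $T$ via the local well-posedness theory---is exactly what your Kato--Ponce/Moser/Gronwall computation establishes. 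So the work is right; only your opening sentence assigns the Sobolev embedding to the wrong half of the equivalence, and what you call ``the contrapositive'' is in fact the contrapositive of the direction you just (incorrectly) declared trivial.
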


Summing up, for an initial condition $ u_0 \in H^r (\Sb) $ {\rm (}$r>5/2${\rm )} and $ s \in (5/2,r] $, the blow up time $ T^{(s)} $ with respect to $ H^s(\Sb) $
satisfies $ T^{(r)} = T^{(s)} \le T^{(2)} $.
However, thus far, it has not been proved whether the inequality is strict.

On the other hand, Li and Yin~\cite{LY2017_1} found that
there is some initial condition such that the corresponding solution blows up in finite time in the sense of Proposition~\ref{prop_bu_ux}.

\begin{proposition}[\protect{\cite[Theorem~3.4]{LY2017_1}}]\label{prop_bu}
	Let $u_0 \in H^s (\Sb) $ be an initial condition {\rm (}$ s > 3 ${\rm )} satisfying $ \mathcal{F} (u_0) = 0 $ and $ |h| \ge 4 L^2 | \omega | $.
	Then, the corresponding solution satisfies $  \limsup_{t \uparrow T} \{ - \inf_{x \in \Sb} u_{x} (t,x) \} = + \infty $ for finite $T > 0 $, i.e., it blows up in finite time.
\end{proposition}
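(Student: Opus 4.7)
The plan is to recast~\eqref{eq_mHS} in a form amenable to the method of characteristics and then derive a Riccati-type differential inequality for $u_x$ along a well-chosen characteristic. Expanding $\frac{1}{2}(u^2)_{xx}=u_x^2+u\,u_{xx}$ rewrites~\eqref{eq_mHS} as
\[ u_{tx} + u\,u_{xx} = 2\omega u - \frac{1}{2} u_x^2. \]
Along the characteristic $\dot{x}(t)=u(t,x(t))$ issuing from any $x_0\in\Sb$, the chain rule applied to $\mu(t):=u_x(t,x(t))$ gives
\[ \dot{\mu}(t) = u_{tx}(t,x(t)) + u(t,x(t))\,u_{xx}(t,x(t)) = 2\omega\, u(t,x(t)) - \frac{1}{2}\mu(t)^2, \]
so the task reduces to controlling the sign of the forcing term $2\omega u$ along the characteristic.

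Next I would use the conservation laws to pin down the sign of $\omega u$ uniformly in $(t,x)$. The implicit constraint $\mathcal{F}(u_0)=0$ combined with preservation of $\mathcal{H}$ yields $\|u_x(t)\|_2^2=-4\omega h L$, which forces $\omega h<0$; in particular $u_0$ is non-constant since $|h|\ge 4L^2|\omega|$ implies $h\neq 0$. The Poincar\'e--Wirtinger inequality (Lemma~\ref{lem_PW}) then gives
\[ \|u(t)-h\mathbf{1}\|_\infty \le \sqrt{L}\,\|u_x(t)\|_2 = L\sqrt{|4\omega h|}. \]
The hypothesis $|h|\ge 4L^2|\omega|$ is algebraically equivalent to $L\sqrt{|4\omega h|}\le |h|$, so $u(t,x)$ has the same sign as $h$ for every $(t,x)$ in the lifespan of the solution. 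Because $\omega$ and $h$ have opposite signs, this forces $\omega u(t,x(t))\le 0$ along every characteristic, and the identity above collapses to the scalar differential inequality
\[ \dot{\mu}(t) \le -\frac{1}{2}\mu(t)^2. \]

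Finally I would start the characteristic at a point $x_0\in\Sb$ with $u_{0,x}(x_0)<0$; such a point exists because $u_{0,x}$ has zero mean and is not identically zero (as $\|u_{0,x}\|_2^2=-4\omega h L>0$). Setting $\mu_0:=\mu(0)<0$, comparison with the scalar Riccati ODE $\dot{y}=-y^2/2$ gives $1/\mu(t)\ge 1/\mu_0 + t/2$, forcing $\mu(t)\to-\infty$ at some finite $T\le -2/\mu_0$. Since $-\inf_{x\in\Sb}u_x(t,x)\ge -\mu(t)$, this establishes $\limsup_{t\uparrow T}\{-\inf_x u_x(t,x)\}=+\infty$, which is precisely the claimed finite-time blow up.

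The genuinely delicate step is extracting the pointwise sign property $\mathrm{sgn}(u(t,x))=\mathrm{sgn}(h)$ from the algebraic assumption $|h|\ge 4L^2|\omega|$: this is exactly what eliminates the potentially destabilizing term $2\omega u$ from the characteristic ODE. Once this sign property is in hand, the Riccati blow-up mechanism is essentially automatic, so the strength of the assumption $|h|\ge 4L^2|\omega|$ is spent entirely on the one step of turning the $L^\infty$ bound of Proposition~\ref{prop_snb} into a one-sided pointwise bound.
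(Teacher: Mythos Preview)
The paper does not prove this proposition; it is quoted from \cite[Theorem~3.4]{LY2017_1} without argument, so there is no in-paper proof to compare against. Your proposal reproduces what is essentially the standard approach for such blow-up results and is correct in all the substantive steps: the rewriting along characteristics, the crucial sign extraction $\omega u(t,x)\le 0$ from $|h|\ge 4L^2|\omega|$ via Poincar\'e--Wirtinger (indeed $|h|\ge 4L^2|\omega|$ is exactly $L\sqrt{|4\omega h|}\le|h|$), and the reduction to the Riccati inequality $\dot\mu\le -\tfrac12\mu^2$ are all sound. Your diagnosis that the entire strength of the hypothesis is spent on the sign step is exactly right.

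One small point to tighten: the Riccati bound shows that the maximal existence time satisfies $T_{\max}\le -2/\mu_0$, but does not by itself guarantee $\mu(t)\to-\infty$ as $t\uparrow T_{\max}$ in the event that $T_{\max}$ is strictly smaller (the solution could in principle lose $H^s$ regularity earlier for other reasons; cf.\ the discussion after Proposition~\ref{prop_bu_uxx}). To close the loop cleanly you should either invoke the blow-up criterion of Proposition~\ref{prop_bu_ux}, or---closer to how such arguments are usually carried out---track $m(t):=\inf_x u_x(t,x)$ directly instead of $u_x$ along a single characteristic. At a spatial minimum of $u_x$ one has $u_{xx}=0$, so the PDE gives $m'(t)=2\omega u(t,\xi(t))-\tfrac12 m(t)^2\le-\tfrac12 m(t)^2$ almost everywhere (via a Constantin--Escher-type lemma, for which the assumption $s>3$ supplies the needed regularity on $u_x$), and the Riccati mechanism then forces $m(t)\to-\infty$ in finite time, which is exactly the stated conclusion without any appeal to a separate blow-up criterion.
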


However, we do not know whether the blow up in the sense of Proposition~\ref{prop_bu_uxx} occurs.
This will be numerically investigated later on.

\section{Derivation of a stable numerical scheme}
\label{sec_des}

As shown in Proposition~\ref{prop_snb},
the $ L^{\infty} $ bound derives from the fact that
$ \int_{\Sb} u_x^2 (t,x) \rd x $ and $ \int_{\Sb} u(t,x) \rd x $ remain the same as time passes,
which is ensured by invariants $ \mathcal{F} $ and $ \mathcal{H} $.
Therefore, to replicate the $L^{\infty} $ bound,
we derive a numerical scheme rigorously preserving $ \mathcal{F} $ and $ \mathcal{H} $.

Section~\ref{subsec_reform} is devoted to the reformulation for achieving the discrete conservation.
Then, we derive a conservative finite difference scheme and prove its stability in Section~\ref{subsec_design}.

\subsection{Reformulation for invariant preservation}
\label{subsec_reform}

As mentioned in the Introduction, to achieve preservation of $ \mathcal{F} $ and $ \mathcal{H} $,
some special treatment is indispensable.
Toward this end,
we consider the underdetermined form~\eqref{eq_mHS2}.
Then, it is easy to observe that, for sufficiently smooth solutions,
the original problem~\eqref{eq_mHS} is equivalent to the initial value problem
\begin{equation}\label{eq_ivp_mHS2}
	\begin{cases}
		u_{txx} + \frac{1}{2} \left(u^2 \right)_{xxx} = 2 \omega u_x + \frac{1}{2} \left( u_x^2 \right)_x \qquad &( t \in (0,T) , x \in \Sb),\\
		\mathcal{F} (u(t)) = 0 \qquad & ( t \in (0,T) ),\\
		u(0,x) = u_0 (x) & (x \in \Sb) . \\
	\end{cases}
\end{equation}

On the other hand, as stated by Miyatake, Cohen, Furihata, and Matsuo~\cite{MCFM2017},
all solutions $u$ of~\eqref{eq_mHS2} (not~\eqref{eq_ivp_mHS2}) satisfy $ \mathcal{H} (u(t)) = \mathcal{H} (u(0)) $ (see also~Lenells~\cite{L2008}).
To confirm this, we rewrite~\eqref{eq_mHS2} as
\begin{equation}\label{eq_mHS2_con}
	u_{txx} = \mathcal{A} (u(t)) u,
\end{equation}
where the linear operator $ \mathcal{A} (v) $ is defined as $  \mathcal{A} (v) := (\omega - v_{xx}) \partial_x + \partial_x ( \omega - v_{xx} ) $,
and we see that
\begin{align*}
	\frac{\rd}{\rd t} \mathcal{H} (u(t))
	&= \int_{\Sb} u_x u_{tx} \rd x = - \int_{\Sb} u u_{txx} \rd x
	= - \int_{\Sb} u \mathcal{A} (u(t)) u \rd x = 0,
\end{align*}
where the last equality comes from the skew-symmetry of the linear operator $ \mathcal{A} (u(t)) $.

\begin{remark}
	The spatial differential operator $ \partial_x$ in the linear operator $ \mathcal{A} (u(t)) $ operates on $ ( \omega - u_{xx} ) u $.
	Although this convention may seem confusing,
	we employ this notation to emphasize the importance of the skew-symmetry of the linear operator $\mathcal{A} (u(t))$.
\end{remark}

\begin{remark}
	In view of the regularity, the initial value problems~\eqref{eq_mHS} and~\eqref{eq_ivp_mHS2} are certainly different.
	However, as the main objective of this paper is to show the convergence rate of our numerical scheme,
	we restrict ourselves to sufficiently smooth solutions so that this discrepancy will not cause any restriction.
\end{remark}

\subsection{Derivation of a stable numerical scheme for mHS equation}
\label{subsec_design}

To replicate the preservation of $\mathcal{H} $, we employ the discretization of ~\eqref{eq_mHS2_con} proposed by Miyatake, Cohen, Furihata, and Matsuo~\cite{MCFM2017}:
\begin{equation}\label{eq_dvdm_mHS2_con}
	\cd[2]_x \fd_t \ud{m}{k} = \mathcal{A}_{\rd} \big( \ud{m+1/2}{} \big) \ud{m+1/2}{k},
\end{equation}
where $ \mathcal{A}_{\rd} \left( v \right) := \Big( \omega -  \cd[2]_x v \Big) \cd_x  + \cd_x \Big( \omega - \cd[2]_x v \Big) $ and $ \ud{m+1/2}{k} := \fa_t \ud{m}{k} $.
As mentioned in \cite{MCFM2017}, the conservation of the discrete counterpart $ \mathcal{H}_{\rd} $ of $ \mathcal{H} $ is ensured,
where
\begin{equation}\label{eq_norm_d}
	\mathcal{H}_{\rd} (v) := \frac{1}{2} \sum_{k=1}^K ( \fd_x v_k )^2 \Delta x .
\end{equation}
Although it has been proved in \cite{MCFM2017}, we show it here for the readers' convenience:
\begin{align*}
	\fd_t \mathcal{H}_{\rd} \left( \ud{m}{} \right)
	&= \sum_{k=1}^K \left( \fa_t \fd_x \ud{m}{k} \right) \left( \fd_t \fd_x \ud{m}{k} \right) \Delta x 
	= - \sum_{k=1}^K \ud{m+1/2}{k} \left( \cd[2]_x \fd_t \ud{m}{k} \right) \Delta x \\
	&= - \sum_{k=1}^K \ud{m+1/2}{k} \mathcal{A}_{\rd} \big( \ud{m+1/2}{} \big) \ud{m+1/2}{k} \Delta x
	= 0,
\end{align*}
where the last equality is due to the skew-symmetry of the linear operator $  \mathcal{A}_{\rd} \big( \ud{m+1/2}{} \big) $.

The discretization~\eqref{eq_dvdm_mHS2_con} is obviously underdetermined, as is~\eqref{eq_mHS2_con}.
Hence, we should impose some ``constraint'' to make it uniquely solvable.
By employing some discrete counterpart of $ \mathcal{F} (u(t)) = 0 $ in \eqref{eq_ivp_mHS2} as such a ``constraint,''
we achieve the discrete preservation of $ \mathcal{F} $ and $ \mathcal{H} $.
In view of~\eqref{eq_av}, we define $ \mathcal{F}_{\rd} $ as
\begin{equation}\label{eq_ic_d}
	\mathcal{F}_{\rd} (v) := \sum_{k=1}^K \left( 2 \omega v_k + \frac{1}{2} \left( \fd_x v_k \right)^2 \right) \Delta x,
\end{equation}
as a discrete counterpart of $ \mathcal{F} $.

Thus, our numerical scheme is the following one-step method:
\begin{equation}\label{eq_proposed}
	\begin{cases}
		\cd[2]_x \fd_t \ud{m}{k} = \mathcal{A}_{\rd} \big( \ud{m+1/2}{} \big) \ud{m+1/2}{k} \quad & (m=0,1,\dots,M-1;k \in \mathbb{Z}),\\
		\mathcal{F}_{\rd} \big( \ud{m+1}{} \big) = \mathcal{F}_{\rd} \big( \ud{0}{} \big) & (m=0,1,\dots,M-1),\\
		\ud{0}{k} = u_0 ( k \Delta x ) & (k \in \mathbb{Z}),\\
		\ud{m}{k+K} = \ud{m}{k} & ( m = 0,1,\dots,M-1, k \in \mathbb{Z} ).
	\end{cases}
\end{equation}
Here, since $ \mathcal{F}_{\rd} \big( \ud{0}{} \big) \neq 0 $ even when $ \mathcal{F} (u_0) = 0 $,
we use the constraint $ \mathcal{F}_{\rd} \big( \ud{m}{} \big) = \mathcal{F}_{\rd} \big( \ud{0}{} \big) $ as a discrete counterpart of $ \mathcal{F} (u(t)) = \mathcal{F} (u_0) = 0 $ (see Remark~\ref{rem_equiv}).

It should be noted that, owing to discrete invariants $ \mathcal{H}_{\rd} $ and $ \mathcal{F}_{\rd} $,
each solution $\ud{m}{k} $ of~\eqref{eq_proposed} satisfies
\begin{align*}
	\sum_{k=1}^K \ud{m}{k} \Delta x
	&= \frac{1 }{2 \omega } \left( - \frac{1}{2} \sum_{k=1}^K \left( \fd_x \ud{m}{k} \right)^2 \Delta x +  \mathcal{F}_{\rd} \big( \ud{0}{} \big) \right) \\
	&= \frac{1 }{2 \omega } \left( - \frac{1}{2} \sum_{k=1}^K \left( \fd_x \ud{0}{k} \right)^2 \Delta x +  \mathcal{F}_{\rd} \big( \ud{0}{} \big) \right)
	= \sum_{k=1}^K \ud{0}{k} \Delta x .
\end{align*}

Therefore, the discrete Poincar\'e--Wirtinger inequality (Lemma~\ref{lem_dPW}) reveals the discrete $ L^{\infty} $ bound of the numerical solution.

\begin{theorem}[$L^{\infty} $ bound]\label{thm_snb_d}
	All solutions $ \ud{m}{k} $ of \eqref{eq_proposed} satsify
	\[ \big\| \ud{m}{} \big\|_{\infty} \le L \sqrt{| 4 \omega  \hd |} + | \hd | \]
	for $ m = 0,1,\dots,M $, where $ \hd := (1 / L ) \sum_{k=1}^K \ud{0}{k} \Delta x $.
\end{theorem}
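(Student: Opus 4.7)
The plan is to mirror the proof of Proposition~\ref{prop_snb} at the discrete level, using Lemma~\ref{lem_dPW} (discrete Poincar\'e--Wirtinger) in place of Lemma~\ref{lem_PW}. The only ingredients needed beyond this inequality are the two conservation laws $\mathcal{H}_{\rd}(\ud{m}{}) = \mathcal{H}_{\rd}(\ud{0}{})$ (established immediately below~\eqref{eq_norm_d}) and $\mathcal{F}_{\rd}(\ud{m}{}) = \mathcal{F}_{\rd}(\ud{0}{})$ (imposed by the scheme~\eqref{eq_proposed}).

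First, I would split and apply the triangle inequality together with Lemma~\ref{lem_dPW}:
\begin{equation*}
	\big\| \ud{m}{} \big\|_{\infty} \le \big\| \ud{m}{} - \bar{u}^{(m)}\mathbf{1} \big\|_{\infty} + \big| \bar{u}^{(m)} \big| \le \sqrt{L}\, \big\| \fd_x \ud{m}{} \big\| + \big| \bar{u}^{(m)} \big|,
\end{equation*}
where $ \bar{u}^{(m)} := (1/L) \sum_{k=1}^{K} \ud{m}{k} \Delta x $. Combining the two conservation laws through the definition~\eqref{eq_ic_d} -- this is exactly the identity displayed immediately before the theorem -- gives $ \bar{u}^{(m)} = \bar{u}^{(0)} = \hd $ for every $ m $, so the second term is simply $ |\hd| $.

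Second, I would control the gradient term. By $ \mathcal{H}_{\rd} $-conservation, $ \big\| \fd_x \ud{m}{} \big\|^2 = 2 \mathcal{H}_{\rd}(\ud{0}{}) $. Evaluating~\eqref{eq_ic_d} at $ m = 0 $ and using $ \bar{u}^{(0)} = \hd $ together with $ \mathcal{F}_{\rd}(\ud{0}{}) = 0 $ -- the direct discrete analog of the continuous implicit constraint $ \mathcal{F}(u_0) = 0 $ -- yields $ \mathcal{H}_{\rd}(\ud{0}{}) = - 2 \omega L \hd $, and hence $ \big\| \fd_x \ud{m}{} \big\|^2 = - 4 \omega L \hd \le 4 L | \omega \hd | $. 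Consequently $ \sqrt{L}\, \big\| \fd_x \ud{m}{} \big\| \le L \sqrt{ | 4 \omega \hd | } $, and adding the two contributions produces the claimed bound.

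Once the two discrete invariants are in hand, the argument is essentially a line-by-line transcription of the continuous proof, so no genuine obstacle arises. The one point worth flagging is bookkeeping: the scheme enforces $ \mathcal{F}_{\rd}(\ud{m}{}) = \mathcal{F}_{\rd}(\ud{0}{}) $ rather than $ \mathcal{F}_{\rd}(\ud{m}{}) = 0 $, so the bound in the stated form tacitly assumes that the initial grid function has been prepared so that $ \mathcal{F}_{\rd}(\ud{0}{}) = 0 $, the appropriate discrete counterpart of the continuous implicit constraint; under that convention the proof is immediate.
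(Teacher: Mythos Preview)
Your proposal is correct and follows essentially the same route as the paper: triangle inequality, discrete Poincar\'e--Wirtinger (Lemma~\ref{lem_dPW}), and the two discrete conservation laws, exactly mirroring Proposition~\ref{prop_snb}. Your closing caveat is well taken: the final step $\sqrt{L}\,\|\fd_x\ud{m}{}\| = L\sqrt{|4\omega\hd|}$ in the paper's own proof tacitly uses $\mathcal{F}_{\rd}(\ud{0}{})=0$ (which also forces $\omega\hd\le 0$), so you have correctly identified an assumption that the paper leaves implicit.
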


\begin{proof}
	Owing to the discrete Poincar\'e--Wirtinger inequality~\eqref{ineq_dPW}, we see that
	\begin{align*}
		\big\| \ud{m}{} \big\|_{\infty}
		&\le \big\| \ud{m}{} - \hd \mathbf{1} \big\|_{\infty} +  \left| \hd \right| \le \sqrt{L} \big\| \fd_x \ud{m}{} \big\|_2 + | \hd |
		= L \sqrt{| 4 \omega  \hd |} + | \hd |,
	\end{align*}
	which proves the theorem. 
\end{proof}

\begin{remark}\label{rem_equiv}
	Owing to the definition of the discrete constraint in the scheme~\eqref{eq_proposed},
	our method can be regarded as a discretization of the initial value problem
	\begin{equation}\label{eq_mHS_ode}
		\begin{cases}
			u_{txx} + \frac{1}{2} \left(u^2 \right)_{xxx} = 2 \omega u_x + \frac{1}{2} \left( u_x^2 \right)_x \qquad & (t \in (0,T), x \in \Sb ), \\
			\mathcal{F}(u(t)) = \mathcal{F} (u_0 ) & (x \in \Sb ), \\
			u (0,x) = u_0 (x) & (x \in \Sb),
		\end{cases}
	\end{equation}
	even when $ \mathcal{F} (u_0) \neq 0 $.
	Note that the case $ \mathcal{F} (u_0) = 0 $ corresponds to the target problem~\eqref{eq_mHS}.
	Moreover, although we focus on the case $ \mathcal{F} (u_0) = 0 $ in this paper,
	the mathematical analysis in the following sections is valid for the (artificial) general case~\eqref{eq_mHS_ode}.
	
	As shown in Section~\ref{sec_relation} in the Appendix of the present paper,
	our method actually coincides with the scheme of Miyatake, Cohen, Furihata, and Matsuo~\cite{MCFM2017}	as a numerical method for~\eqref{eq_mHS_ode}.
	Therefore, the results reported in the present paper are also valid for their numerical scheme.
	In other words, the scheme of Miyatake, Cohen, Furihata, Matsuo~\cite{MCFM2017} is uniquely solvable and its numerical solutions are convergent to appropriate traveling wave solutions.
\end{remark}

\section{Unique existence and convergence rate}
\label{sec_anal}

In this section, we prove the unique existence of the numerical solution of the scheme~\eqref{eq_proposed}
and show its convergence rate.

\subsection{Unique existence of the numerical solution}
\label{subsec_ue}

Here, we show the unique existence of the numerical solution of~\eqref{eq_proposed}.
However, if we naively use the contraction mapping theorem,
we obtain some severe sufficient condition for unique existence owing to a large number of difference operators.
In this section, we show that
the appropriate reformulation~\eqref{eq_proposed_v_int} and Corollary~\ref{cor_pinv} (see Remark~\ref{rem_pinv})
enable us to achieve a mild sufficient condition $ \Delta t = O (\Delta x ) $.

Toward this end, we consider a reformulation
by introducing a new variable $ \vd{m}{k} := \bd_x \ud{m}{k} $.
Then, $ \ud{m}{k} = \ibd_x \vd{m}{k} + \hd $ holds owing to the invariant $ \sum_{k=1}^K \ud{m}{k} \Delta x $
(recall that $ \hd = (1/L ) \sum_{k=1}^K \ud{0}{k} \Delta x $).
Thus, we consider the reformulation
\begin{align*}
	\fd_x \fd_t \vd{m}{k}
	&= 2 \omega \fa_x \vd{m+1/2}{k} - \frac{1}{2} \fd_x \left( \vd{m+1/2}{k} \right)^2 
	 - \cd_x \left( \left( \ibd_x \vd{m+1/2}{k} + \hd \right) \left( \fd_x \vd{m+1/2}{k} \right) \right).
\end{align*}
Since $ \fd_t \vd{m}{} \in \Car ( \fd_x )  $ holds, operating with $ \ifd_x $ on both sides yields
 \begin{align}
 	\fd_t \vd{m}{k}
 	&= 2 \omega \ifd_x \fa_x \vd{m+1/2}{k} - \frac{1}{2} P \left( \vd{m+1/2}{k} \right)^2 
 	 - P \ba_x \left( \left( \ibd_x \vd{m+1/2}{k} + \hd \right) \left( \fd_x \vd{m+1/2}{k} \right) \right) \label{eq_proposed_v_int}
 \end{align}
(recall that $ P $ is the orthogonal projector onto the set of zero-mean vectors).

The following lemma shows the validity of this reformulation.

\begin{lemma}\label{lem_equiv_d}
	If $ \ud{m}{k} $ is a solution of the scheme~\eqref{eq_proposed},
	then $ \vd{m}{k} := \bd_x \ud{m}{k} $ solves~\eqref{eq_proposed_v_int}.
	Conversely, if $ \vd{m}{k} $ is a solution of~\eqref{eq_proposed_v_int} with the initial condition $ \vd{0}{k} = \bd_x \ud{0}{k} $,
	then $ \ud{m}{k} := \ibd_x \vd{m}{k} + \hd $ solves~\eqref{eq_proposed}.
\end{lemma}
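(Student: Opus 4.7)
My plan is to prove both directions by direct algebraic manipulation, exploiting the commutativity of the circulant difference operators together with the identities $\ibd_x\bd_x = \bd_x\ibd_x = P$ and $\ifd_x\fd_x = \fd_x\ifd_x = P$. For the forward direction, I would set $\vd{m}{k} := \bd_x\ud{m}{k}$ and aim to rewrite the evolution equation in \eqref{eq_proposed} purely in terms of $v$. Commutativity immediately gives $\cd[2]_x\fd_t\ud{m}{k} = \fd_x\fd_t\vd{m}{k}$ on the left. On the right, the crucial substitutions are $\cd[2]_x\ud{m+1/2}{k} = \fd_x\vd{m+1/2}{k}$, $\cd_x\ud{m+1/2}{k} = \fa_x\vd{m+1/2}{k}$ (from $\cd_x = \fa_x\bd_x$), and $\ud{m+1/2}{k} = \ibd_x\vd{m+1/2}{k} + \hd$; the third uses conservation of the mean $\sum_k\ud{m}{k}\Delta x = L\hd$, which is precisely the short computation displayed right after \eqref{eq_proposed}. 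After expanding $\mathcal{A}_{\rd}(\ud{m+1/2}{})\ud{m+1/2}{k}$ with these substitutions and using the Leibniz rule in the form $(\fa_x v)(\fd_x v) = \tfrac{1}{2}\fd_x(v^2)$, the right-hand side becomes $2\omega\fa_x\vd{m+1/2}{k} - \tfrac{1}{2}\fd_x(\vd{m+1/2}{k})^2 - \cd_x\bigl((\ibd_x\vd{m+1/2}{k}+\hd)(\fd_x\vd{m+1/2}{k})\bigr)$; since $\fd_t\vd{m}{}\in\Range(\bd_x) = \Car(\fd_x)$, applying $\ifd_x$ together with $\ifd_x\fd_x = P$ and $\ifd_x\cd_x = P\ba_x$ then yields \eqref{eq_proposed_v_int}.

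For the converse, given $\vd{m}{k}$ solving \eqref{eq_proposed_v_int} with $\vd{0}{k} = \bd_x\ud{0}{k}$, I would first show by induction that $\vd{m}{}\in\Range(P)$ for every $m$: this holds at $m=0$ since $\Range(\bd_x) = \Range(P)$, and in the inductive step every term on the right-hand side of \eqref{eq_proposed_v_int} lies in $\Range(P)$, being either explicitly projected by $P$ or lying in $\Range(\ifd_x) = \Car(\fd_x) = \Range(P)$. Setting $\ud{m}{k} := \ibd_x\vd{m}{k} + \hd$, the correct initial condition follows from $\ibd_x\bd_x\ud{0}{} = P\ud{0}{} = \ud{0}{} - \hd\mathbf{1}$. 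Then I would apply $\fd_x$ to both sides of \eqref{eq_proposed_v_int}, use $\fd_x\ifd_x\fd_t\vd{m}{} = P\fd_t\vd{m}{} = \fd_t\vd{m}{}$ on the left, and retrace the algebra of the forward direction on the right to recover $\cd[2]_x\fd_t\ud{m}{k} = \mathcal{A}_{\rd}(\ud{m+1/2}{})\ud{m+1/2}{k}$.

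The last thing to verify is the discrete constraint $\mathcal{F}_{\rd}(\ud{m+1}{}) = \mathcal{F}_{\rd}(\ud{0}{})$. Because $\ibd_x\vd{m}{}$ has zero mean, $\ud{m}{}$ has mean $\hd$ for every $m$; moreover, an index-shift argument gives $\sum_k(\fd_x\ud{m}{k})^2\Delta x = \sum_k(\bd_x\ud{m}{k})^2\Delta x = \|\vd{m}{}\|^2$, so $\mathcal{F}_{\rd}(\ud{m}{}) = 2\omega L\hd + \tfrac{1}{2}\|\vd{m}{}\|^2$ and it suffices to check that $\|\vd{m}{}\|^2$ is conserved in $m$. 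This is exactly the $\mathcal{H}_{\rd}$-conservation calculation of Section~\ref{subsec_design} applied to the recovered evolution equation: take the inner product with $\ud{m+1/2}{}$ and invoke the skew-symmetry of $\mathcal{A}_{\rd}(\ud{m+1/2}{})$ together with summation by parts.

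The hard part will be the algebraic expansion of $\mathcal{A}_{\rd}(\ud{m+1/2}{})\ud{m+1/2}{k}$ in the forward direction, where the Leibniz rule, the commutativity and adjoint relations among the various difference and average operators, and the substitutions $\cd_x u = \fa_x v$, $\cd[2]_x u = \fd_x v$ all have to be dovetailed without error. A more conceptual subtlety is that in \eqref{eq_proposed} the mean of $\ud{m+1}{}$ is pinned down implicitly by the algebraic constraint on $\mathcal{F}_{\rd}$, whereas \eqref{eq_proposed_v_int} is a fully determined ODE in $v$ alone; the $\mathcal{F}_{\rd}$-constraint then re-emerges for free only thanks to the skew-symmetric Hamiltonian structure inherited by the reformulation, which is precisely what the verification above is designed to bring out.
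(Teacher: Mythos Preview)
Your proposal is correct and follows essentially the same route as the paper: the forward direction is the algebra displayed just before the lemma, and the converse hinges on the inductive zero-mean property of $\vd{m}{}$ (every term on the right of \eqref{eq_proposed_v_int} lies in $\Range(P)$), after which the manipulations reverse. The paper's write-up is terser---it states only that it suffices to check $\bd_x\ud{m}{k}=\vd{m}{k}$ and $\fd_x\ifd_x\fa_x\vd{m+1/2}{k}=\fa_x\vd{m+1/2}{k}$, leaving the recovery of the $\mathcal{F}_{\rd}$-constraint implicit (it follows from the constant mean $\hd$ together with the $\mathcal{H}_{\rd}$-conservation already proved in Section~\ref{subsec_design})---whereas you spell this last step out explicitly; but the underlying argument is the same.
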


\begin{proof}
	The former part has already been confirmed above.
	For the latter part, we assume that $ \vd{m}{k} $ is the solution of~\eqref{eq_proposed_v_int} satisfying $ \vd{0}{k} = \bd_x \ud{0}{k} $ and
	define $ \ud{m}{k} = \ibd_x \vd{m}{k} + \hd $.
	Then, it is sufficient to prove that $ \bd_x \ud{m}{k} = \vd{m}{k} $ and $ \fd_x \ifd_x \fa_x \vd{m+1/2}{k} = \fa_x \vd{m+1/2}{k} $.
	In other words, we should confirm that $ \sum_{k=1}^K \vd{m}{k} \Delta x = 0 $ holds for any $ m = 0,1,\dots,M $.
	Since $ \sum_{k=1}^K \vd{0}{k} = 0 $ is satisfied by definition,
	it suffices to confirm that $ \fd_t \vd{m}{k} $ is a zero-mean vector for any $m$,
	which is obvious by $ \Range ( \ifd_x ) = \Range (P) = \{ v \in \RR^K \mid \sum_{k=1}^K v_k \Delta x = 0 \}$. 
\end{proof}

To prove the unique existence of the solution of~\eqref{eq_proposed_v_int},
we use the contraction mapping theorem.
Toward this end, we introduce
\begin{align}
	\phi_{v} (w) &= v + \omega \Delta t \ifd_x \fa_x w - \frac{\Delta t}{4} P \psi (w), \label{eq_fpi} \\
	\psi (w) &= w^2 + 2 \ba_x \left( \left( \ibd_x w + \hd \mathbf{1} \right) * \left( \fd_x w \right) \right) , \notag
\end{align}
where $ w^2 := w * w $.
Then, it holds that
\[ \text{$ w^{\ast} $ is a fixed point of $ \phi_{\vd{m}{}} $} \iff \text{$ \vd{m+1}{} = 2 w^{\ast} - \vd{m}{} $ is a solution of \eqref{eq_proposed_v_int}}. \]

The following lemma provides us with a sufficient condition for
ensuring that $ \phi_v $ maps some closed ball into itself.

\begin{lemma}\label{lem_ball}
	Let $ B (pr) $ be the ball defined as $ B (pr) = \{ w \mid \| w \| \le pr \}$ {\rm (}$p>1$, $ r = \| v \|_2 ${\rm )}.
	If mesh sizes $ \Delta t $ and $ \Delta x $ satisfy
	\[ \Delta t \le \epsilon_1 (p,r):= \frac{4 (p-1) \Delta x}{ p} \left(  \left| \omega \right| L \Delta x + p r \sqrt{\Delta x} + 4 \left| \hd \right| + 4C p r \right)^{-1} , \]
	then $ \phi_{v} (B (pr)) \subseteq B (pr) $ holds.
	Here, $ C \in \RR $ is a constant defined in Corollary~\ref{cor_pinv}.
\end{lemma}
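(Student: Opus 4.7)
The plan is to apply the triangle inequality directly to the definition \eqref{eq_fpi} of $\phi_v$ and prove that $\|\phi_v(w)-v\|\le (p-1)r$ uniformly for $w\in B(pr)$; combined with $\|v\|=r$, this yields $\phi_v(w)\in B(pr)$. Splitting
\[ \| \phi_v (w) \| \le \| v \| + |\omega| \Delta t \, \| \ifd_x \fa_x w \| + \frac{\Delta t}{4} \| P \psi (w) \|, \]
I would bound the two remainder terms separately and then match the result against the stated expression for $\epsilon_1(p,r)$.

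For the linear term, Lemma~\ref{lem_bound_pinv_fd} gives $\|\ifd_x\|_2 \le L/4$ and Lemma~\ref{lem_ineqs} gives $\|\fa_x\|_2 = 1$, hence $|\omega|\Delta t\,\|\ifd_x\fa_x w\|\le |\omega| L\Delta t\,pr/4$. For the nonlinear term I would use $\|P\|_2 = \|\ba_x\|_2 = 1$ together with the three ingredients
\[ \| w * w \| \le \frac{\| w \|^2}{\sqrt{\Delta x}}, \qquad \| \fd_x w \| \le \frac{2}{\Delta x} \| w \|, \qquad \bigl\| ( \ibd_x w ) * ( \fd_x w ) \bigr\| \le C \| w \| \| \fd_x w \|, \]
the first two from Lemma~\ref{lem_ineqs} and the last from Corollary~\ref{cor_pinv}; together with the trivial identity $\|(\hd\mathbf{1})*(\fd_x w)\| = |\hd|\,\|\fd_x w\|$, these yield
\[ \| \psi (w) \| \le \frac{\| w \|^2}{\sqrt{\Delta x}} + \frac{4 C \| w \|^2}{\Delta x} + \frac{4 |\hd|\,\| w \|}{\Delta x}. \]

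Substituting $\|w\|\le pr$, collecting terms, and demanding that the sum of the two remainder bounds not exceed $(p-1)r$ gives
\[ \frac{\Delta t\, pr}{4} \left( |\omega| L + \frac{pr}{\sqrt{\Delta x}} + \frac{4Cpr}{\Delta x} + \frac{4|\hd|}{\Delta x} \right) \le (p-1)r, \]
which, after multiplying numerator and denominator by $\Delta x$, is exactly $\Delta t\le \epsilon_1(p,r)$. The one delicate point is estimating the cross term $(\ibd_x w)*(\fd_x w)$: using the H\"older-type bound $\|v*w\|\le \|v\|\|w\|/\sqrt{\Delta x}$ combined with $\|\ibd_x w\|\le (L/4)\|w\|$ would introduce an extra $\Delta x^{-1/2}$ factor and force the much stricter restriction $\Delta t = O(\Delta x^{3/2})$. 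Corollary~\ref{cor_pinv}, which replaces this singular factor by the finite constant $\hat{L}$ via the discrete Sobolev embedding, is what preserves the mild condition $\Delta t = O(\Delta x)$ advertised in the introduction, and matching it against the given form of $\epsilon_1(p,r)$ is essentially a bookkeeping exercise.
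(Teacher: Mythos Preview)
Your proof is correct and follows essentially the same approach as the paper's: the same triangle-inequality split of $\phi_v$, the same bounds on $\|\psi(w)\|$ via Lemma~\ref{lem_ineqs} and Corollary~\ref{cor_pinv}, and the same final inequality rearranged into $\Delta t\le\epsilon_1(p,r)$. Your closing remark on why Corollary~\ref{cor_pinv} is needed to avoid the $\Delta t=O((\Delta x)^{3/2})$ restriction also matches the paper's Remark~\ref{rem_pinv}.
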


\begin{proof}
	Let $ w $ be an element of $B(pr) $.
	Then, by using Lemmas~\ref{lem_ineqs} and \ref{lem_bound_pinv_fd} and Corollary~\ref{cor_pinv}, we see that
	\begin{align*}
		\| \psi (w) \|
		&\le \left\| w^2 \right\| + 2 \left\| \left( \ibd_x w \right) * \left( \fd_x w \right) \right\| +2 \left| \hd \right| \left\| \fd_x w \right\| 
		\le \frac{1}{\sqrt{\Delta x }} p^2 r^2 + 2 C \| w \| \left\| \fd_x w \right\| + \frac{4\left| \hd \right|}{\Delta x} p r \\
		&\le \frac{1}{\sqrt{\Delta x }} p^2 r^2 + 2 C \frac{2}{\Delta x} p^2 r^2 + \frac{4\left| \hd \right|}{\Delta x} p r 
		= \frac{pr}{\Delta x} \left( p r \sqrt{\Delta x} + 4 \left| \hd \right| + 4 C p r \right).
	\end{align*}
	Therefore, the norm $ \| \phi_v (w) \| $ can be estimated by
	\begin{align*}
		\| \phi_v (w) \|
		&\le \| v \| + \left| \omega \right| \Delta t \frac{L}{4} \|w\| + \frac{\Delta t}{4} \| \psi (w) \| 
		\le r + \frac{p r \Delta t}{4 \Delta x} \left( \left| \omega \right| L \Delta x + p r \sqrt{\Delta x} + 4 \left| \hd \right| + 4 C p r \right).
	\end{align*}
	When the assumption is satisfied, the right-hand side is bounded by $pr$, which proves the lemma. 
\end{proof}

\begin{remark}\label{rem_pinv}
	It should be noted that Corollary~\ref{cor_pinv} provides us with $ \epsilon_1 (p,r) = O (\Delta x) $.
	In fact, if we employ the standard argument using $ \| v * w \| \le ( 1 / \sqrt{\Delta x} ) \| v \| \| w \| $ for the term
	$ \left\| \left( \ibd_x w \right) * \left( \fd_x w \right) \right\| $,
	the term $ O ( ( \Delta x )^{-3/2} ) $ appears. Hence, we have $ \epsilon_1 (p,r) = O( (\Delta x)^{3/2} ) $.
\end{remark}

The following lemma is to show the sufficient condition for $ \phi_v $ being a contraction mapping.

\begin{lemma}\label{lem_contraction}
	If mesh sizes $ \Delta t $ and $ \Delta x $ satisfy
	\[ \Delta t < \epsilon_2 (p,r):= 4 \Delta x \left( \left| \omega \right| L \Delta x + 2 p r \sqrt{\Delta x} +  4 \left| \hd \right| + 8 C p r \right)^{-1}, \]
	then $ \phi_v : B(pr) \to B(pr) $ is a contraction mapping.
	Here, $ C \in \RR $ is a constant defined in Corollary~\ref{cor_pinv}.
\end{lemma}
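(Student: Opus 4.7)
The plan is to follow the same template as the proof of Lemma~\ref{lem_ball}: bound $\| \phi_v(w_1) - \phi_v(w_2) \|$ in terms of $\|\delta\|$, where $\delta := w_1 - w_2$, with $w_1, w_2 \in B(pr)$. Using the definition \eqref{eq_fpi} together with $\|P\|_2 = 1$, one immediately gets
\[
\| \phi_v(w_1) - \phi_v(w_2) \|
\le |\omega|\Delta t \, \|\ifd_x \fa_x \delta\| + \frac{\Delta t}{4}\,\|\psi(w_1) - \psi(w_2)\|,
\]
so the entire task reduces to estimating $\|\psi(w_1) - \psi(w_2)\|$. For the first term, Lemma~\ref{lem_bound_pinv_fd} and $\|\fa_x\|_2 = 1$ give $\|\ifd_x \fa_x \delta\| \le (L/4)\|\delta\|$, producing the $|\omega|L\Delta x$ contribution (after dividing through by $\Delta x$).

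For the nonlinear term, I would split
\[
\psi(w_1) - \psi(w_2)
= \delta * (w_1 + w_2) + 2\ba_x\bigl[ (\ibd_x \delta)* (\fd_x w_1) + (\ibd_x w_2 + \hd \mathbf{1}) * (\fd_x \delta) \bigr],
\]
the classical telescoping of $w_1^2 - w_2^2$ and of the bilinear product. Then I apply, term by term, the tools from Lemma~\ref{lem_ineqs} and Corollary~\ref{cor_pinv}. Concretely: for $\delta * (w_1 + w_2)$ use $\|v*w\| \le \|v\|\|w\|/\sqrt{\Delta x}$ together with $\|w_1+w_2\| \le 2pr$ to get $\frac{2pr}{\sqrt{\Delta x}}\|\delta\|$; for $(\ibd_x \delta)*(\fd_x w_1)$ apply Corollary~\ref{cor_pinv} directly, obtaining $C\|\delta\|\|\fd_x w_1\| \le (2Cpr/\Delta x)\|\delta\|$; for the $\ibd_x w_2$ piece of the last term, again Corollary~\ref{cor_pinv} gives $C\|w_2\|\|\fd_x \delta\| \le (2Cpr/\Delta x)\|\delta\|$; finally $\hd\mathbf{1}*(\fd_x \delta)$ is bounded by $|\hd|\|\fd_x \delta\| \le (2|\hd|/\Delta x)\|\delta\|$. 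Using $\|\ba_x\|_2 = 1$ and summing yields
\[
\| \psi(w_1) - \psi(w_2) \|
\le \frac{\|\delta\|}{\Delta x}\Bigl( 2pr\sqrt{\Delta x} + 8Cpr + 4|\hd|\Bigr).
\]

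Combining the two estimates gives a Lipschitz constant of $\frac{\Delta t}{4\Delta x}(|\omega|L\Delta x + 2pr\sqrt{\Delta x} + 4|\hd| + 8Cpr)$, and demanding that this be strictly less than $1$ yields exactly the stated $\Delta t < \epsilon_2(p,r)$, so together with Lemma~\ref{lem_ball} (which ensures $\phi_v$ maps $B(pr)$ into itself) the Banach fixed-point hypotheses are met.

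The main obstacle — and the reason for the $\Delta t = O(\Delta x)$ scaling rather than the naive $O(\Delta x^{3/2})$ — is the cross term $(\ibd_x \delta)*(\fd_x w_1)$ (and symmetrically $(\ibd_x w_2)*(\fd_x \delta)$). If one naively invokes $\|v*w\| \le \|v\|\|w\|/\sqrt{\Delta x}$ together with $\|\ibd_x\|_2 \le L/4$ and $\|\fd_x\|_2 \le 2/\Delta x$, one loses an extra $(\Delta x)^{-1/2}$ factor; the key point is to exploit the $H^1_K$-smoothing provided by $\ibd_x$ via Corollary~\ref{cor_pinv} to absorb the would-be $1/\sqrt{\Delta x}$ into the $L^\infty$ bound of $\ibd_x \delta$, as in Remark~\ref{rem_pinv}. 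Once this is recognized, the remaining computation is a routine matter of assembling the four pieces above.
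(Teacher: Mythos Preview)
Your proof is correct and follows essentially the same route as the paper: estimate $\|\psi(w_1)-\psi(w_2)\|$ via a telescoping of the bilinear term, apply Corollary~\ref{cor_pinv} to the two cross terms, and assemble the Lipschitz constant $\frac{\Delta t}{4\Delta x}\bigl(|\omega|L\Delta x + 2pr\sqrt{\Delta x} + 4|\hd| + 8Cpr\bigr)$. The only cosmetic difference is that the paper uses the symmetric splitting
\[
(\ibd_x w_1)\!*\!(\fd_x w_1) - (\ibd_x w_2)\!*\!(\fd_x w_2)
= \Bigl(\ibd_x \tfrac{w_1+w_2}{2}\Bigr)\!*\!\bigl(\fd_x\delta\bigr) + \bigl(\ibd_x\delta\bigr)\!*\!\Bigl(\fd_x\tfrac{w_1+w_2}{2}\Bigr),
\]
whereas you use the asymmetric one $(\ibd_x\delta)\!*\!(\fd_x w_1) + (\ibd_x w_2)\!*\!(\fd_x\delta)$; both yield identical bounds here since $\|w_1\|,\|w_2\|,\|\tfrac{w_1+w_2}{2}\|\le pr$.
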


\begin{proof}
	Let $ w_1 , w_2 $ be elements of $B (p,r)$.
	By using
	\begin{align*}
		\left\| w_1^2 - w_2^2 \right\|
		&= \left\| \left( w_1 + w_2 \right) * \left( w_1 - w_2 \right) \right\|
		\le \frac{2pr}{\sqrt{\Delta x}} \| w_1 - w_2 \|
	\end{align*}
	and
	\begin{align*}
		& \left\| \left( \ibd_x w_1 + \hd \mathbf{1} \right) * \left( \fd_x w_1 \right) - \left( \ibd_x w_2 + \hd \mathbf{1} \right) * \left( \fd_x w_2 \right) \right\| \\
		\le{}& \left\| \left( \ibd_x w_1 \right) * \left( \fd_x w_1 \right) - \left( \ibd_x w_2 \right) * \left( \fd_x w_2 \right) \right\| + \left| \hd \right| \| \fd_x ( w_1 - w_2 ) \| \\
		\le{}& \left\| \left( \ibd_x \frac{w_1 + w_2}{2} \right) * \left( \fd_x (w_1-w_2) \right) \right\| + \left\| \left( \ibd_x (w_1-w_2) \right) * \left( \fd_x \frac{w_1+w_2}{2} \right) \right\| 
		 + \frac{2 \left| \hd \right|  }{\Delta x} \| w_1 - w_2\| \\
		\le{}& C \left\| \frac{w_1+w_2}{2} \right\| \left\| \fd_x (w_1 - w_2) \right\| +C \left\| w_1 - w_2 \right\| \left\| \fd_x \frac{w_1 + w_2}{2} \right\| + \frac{2 \left| \hd \right|  }{\Delta x} \| w_1 - w_2\| \\
		\le{}& \frac{4 C pr + 2 \left| \hd \right|}{\Delta x} \| w_1 - w_2 \|,
	\end{align*}
	we obtain
	\begin{align*}
		\| \phi_v (w_1) - \phi_v (w_2) \|
		&\le \left( \left| \omega \right| \frac{L}{4} \Delta t  + \frac{\Delta t}{4} \left( \frac{2pr}{\sqrt{\Delta x}} + 2 \frac{4 C pr + 2 \left| \hd \right|}{\Delta x}   \right) \right) \| w_1 - w_2 \|\\
		&\le \frac{\Delta t}{4 \Delta x} \left( \left| \omega \right| L \Delta x + 2 p r \sqrt{\Delta x} +  4 \left| \hd \right| + 8 C p r \right) \| w_1 - w_2 \|.
	\end{align*}
	Thus, under the assumption of the lemma, $ \phi_v : B(pr) \to B(pr) $ is a contraction mapping.
\end{proof}

By Lemmas~\ref{lem_equiv_d}, \ref{lem_ball}, and \ref{lem_contraction},
we obtain the following unique existence theorem.
It should be noted that, owing to the conservation law with respect to $ \mathcal{H}_{\rd} $,
$ \epsilon_1 (p,r) $ and $ \epsilon_2 (p,r) $ can be computed from the initial condition for any fixed $p > 1$.

\begin{theorem}\label{thm_ue}
	Let $ p $ and $r$ be real numbers satisfying $ p > 1 $ and $ r = \big\| \bd_x \ud{0}{} \big\| $.
	Then, if $ \Delta t $ satisfies $\Delta t < \min \{ \epsilon_1 (p,r) ,\epsilon_2 (p,r) \}$, the scheme~\eqref{eq_proposed} has a unique solution
	$ \ud{m}{k} \ ( m = 1, \dots, M; k=1,\dots ,K) $.
\end{theorem}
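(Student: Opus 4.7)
The plan is to prove the theorem by induction on $m$, with the fixed-point operator $\phi_{\vd{m}{}}$ from~\eqref{eq_fpi} as the engine. The base case is immediate since $\ud{0}{k} = u_0(k \Delta x)$ is prescribed. For the inductive step, I would first invoke Lemma~\ref{lem_equiv_d}: constructing $\ud{m+1}{}$ from $\ud{m}{}$ reduces to producing a fixed point $w^{\ast}$ of $\phi_{\vd{m}{}}$ with $\vd{m}{} := \bd_x \ud{m}{}$, after which $\vd{m+1}{} := 2 w^{\ast} - \vd{m}{}$ satisfies~\eqref{eq_proposed_v_int} and $\ud{m+1}{} := \ibd_x \vd{m+1}{} + \hd$ yields one step of~\eqref{eq_proposed}.

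The key structural reason the induction closes with the \emph{fixed} radius $r = \| \bd_x \ud{0}{} \|$ is the discrete conservation of $\mathcal{H}_{\rd}$ established in Section~\ref{subsec_design}: since $\| \bd_x v \| = \| \fd_x v \|$ on periodic vectors, one obtains $\| \vd{m}{} \| = \sqrt{2 \mathcal{H}_{\rd}(\ud{m}{})} = \sqrt{2 \mathcal{H}_{\rd}(\ud{0}{})} = r$ at every step that has already been constructed. Thus whenever the inductive step starts with $\vd{m}{}$ of norm $r$, Lemma~\ref{lem_ball} under $\Delta t \le \epsilon_1(p,r)$ gives $\phi_{\vd{m}{}} : B(pr) \to B(pr)$, Lemma~\ref{lem_contraction} under $\Delta t < \epsilon_2(p,r)$ promotes this self-map to a strict contraction, and the Banach fixed-point theorem delivers a unique $w^{\ast} \in B(pr)$.

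For uniqueness of $\ud{m+1}{}$, suppose $\tud{m+1}{}$ were another solution of~\eqref{eq_proposed} from $\ud{m}{}$. Discrete $\mathcal{H}_{\rd}$-conservation forces $\| \bd_x \tud{m+1}{} \| = r$, while the constraint $\mathcal{F}_{\rd}(\tud{m+1}{}) = \mathcal{F}_{\rd}(\ud{0}{})$ combined with $\mathcal{F}_{\rd}(v) = 2\omega \sum_k v_k \Delta x + \mathcal{H}_{\rd}(v)$ pins the mean of $\tud{m+1}{}$ at $\hd$. Consequently $\tilde w := (\vd{m}{} + \bd_x \tud{m+1}{})/2$ satisfies $\| \tilde w \| \le r \le pr$ and is a fixed point of $\phi_{\vd{m}{}}$; uniqueness in $B(pr)$ gives $\tilde w = w^{\ast}$, whence $\bd_x \tud{m+1}{} = \bd_x \ud{m+1}{}$, and matching means yield $\tud{m+1}{} = \ud{m+1}{}$.

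The main obstacle, beyond the mechanical application of Banach's theorem via Lemmas~\ref{lem_ball} and~\ref{lem_contraction}, is closing the induction loop cleanly. One must verify that the freshly constructed $\vd{m+1}{}$ has norm exactly $r$ so that $\epsilon_1(p,r)$ and $\epsilon_2(p,r)$ remain in force at every step, and that the constraint line of~\eqref{eq_proposed}, which is not imposed directly in~\eqref{eq_proposed_v_int}, is nevertheless automatic once $\ud{m+1}{} := \ibd_x \vd{m+1}{} + \hd$ is combined with the discrete $\mathcal{H}_{\rd}$-conservation of Section~\ref{subsec_design}. Both bookkeeping points are settled by the same conservation argument, after which the theorem follows by induction on $m$.
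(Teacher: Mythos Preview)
Your proposal is correct and follows the same route as the paper, which simply cites Lemmas~\ref{lem_equiv_d}, \ref{lem_ball}, and \ref{lem_contraction} together with the conservation of $\mathcal{H}_{\rd}$ to close the induction. In fact you supply more detail than the paper does, especially in the uniqueness step where you explicitly argue that any competing solution must land back in $B(pr)$ via the conservation laws; the paper leaves this implicit.
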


\begin{remark}\label{rem_dt}
	When we execute the scheme by using the fixed point iteration for~\eqref{eq_fpi},
	we should determine the step size $ \Delta t $ satisfying $ \Delta t < \min \{ \epsilon_1 (p,r) ,\epsilon_2 (p,r) \} $ for some $p>1$.
	As an extremely small $ \Delta t $ is not preferable, $ \Delta t $ should be chosen to be slightly below $ \max_{p \in (1,\infty)} \{ \min \{ \epsilon_1 (p,r) ,\epsilon_2 (p,r) \} \} $.
	This can be done by solving the one-dimensional nonlinear equation $ \epsilon_1 (p,r) = \epsilon_2 (p,r) $ when $ \Delta x $, $ \vd{0}{} $ and $ L $ are fixed.
	
	Nevertheless, there is another reasonable choice of $ \Delta t $.
	Since $ \epsilon_1 (2,r) < \epsilon_2 (2,r) $ holds, $ \Delta t \le \epsilon_1 ( 2 , r )  $ is a sufficient condition of unique solvability.
	Moreover, in addition to the fact that $ \epsilon_2 (p,r) $ is monotone decreasing in the regime $ p \in (1,\infty) $,
	we see that
	\begin{align*}
		\epsilon_1 (p,r) &\approx \frac{p-1}{p \left( \left| \hd \right| + C p r  \right)} \Delta x
	\end{align*}
	when $ \Delta x $ is sufficiently small, which implies that $ \epsilon_1 (p,r) $ is expected to be maximum around $ p = 1 + \sqrt{1+| \hd|/(Cr)} > 2$.
	Thus, if $ | \hd | / ( Cr) = r / (4 | \omega | C L ) $ is relatively small (i.e., the initial value $u_0$ is sufficiently calm; see also Proposition~\ref{prop_bu}),
	 the choice $ p = 2 $ (and accordingly, $ \Delta t \approx \epsilon_1 (2,r) $) is nearly optimal.
\end{remark}

\subsection{Convergence analysis}
\label{subsec_conv}

First, we estimate the local truncation error to conduct convergence analysis.
Among several reformulations of our scheme,
\eqref{eq_proposed_v_int} seems to be fit for convergence analysis
because it is in the standard form of evolutionary equations, i.e.,
there is no singular operator operating on $ \fd_t \vd{m}{k} $.
However, owing to the presence of nonlocal operators, such as $ \ifd_x $,
the estimation of the local truncation error turns out to be problematic.

Therefore, we deal with the local truncation error of the original form~\eqref{eq_proposed}.
Later, we show that it is sufficient to conduct convergence analysis owing to discrete conservation laws.

\begin{lemma}\label{lem_lte}
	Let $u$ be a solution of~\eqref{eq_mHS} satisfying $ u ( t) \in C^7 (\Sb) $ and $ u(\cdot, x) \in C^3 (0,T) $.
	Then, for any $ m = 1, \dots , M , k = 1,\dots, K $, and sufficiently small $ \Delta t $ and $ \Delta x $, $ \tud{m}{k} := u( m \Delta t , k \Delta x ) $ satisfies
	\[ \cd[2]_x \fd_t \tud{m}{k} = \mathcal{A}_{\rd} \big( \tud{m+1/2}{} \big) \tud{m+1/2}{k} + \taud{m+1/2}{k} , \]
	where $ \big\| \taud{m+1/2}{} \big\| \le b ( (\Delta x )^2 + (\Delta t )^2 ) $ for some positive constant $ b \in \RR$ depending only on the exact solution~$u$.
\end{lemma}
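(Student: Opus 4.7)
The plan is to Taylor-expand both sides around the midpoint $(t^{m+1/2},x_k)$ with $t^{m+1/2}:=(m+\tfrac12)\Delta t$, then subtract using the fact established in Section~\ref{subsec_reform} that the exact solution satisfies $u_{txx}=\mathcal{A}(u)u$. Writing $P_k:=(t^{m+1/2},x_k)$, the standard midpoint expansions give, uniformly in $k$,
\[ \fd_t\tud{m}{k}=u_t(P_k)+O((\Delta t)^2\|u_{ttt}\|_\infty), \qquad \fa_t\tud{m}{k}=u(P_k)+O((\Delta t)^2\|u_{tt}\|_\infty), \]
while the central differences satisfy $\cd_x f_k=f_x(x_k)+O((\Delta x)^2\|f'''\|_\infty)$ and $\cd[2]_xf_k=f_{xx}(x_k)+O((\Delta x)^2\|f^{(4)}\|_\infty)$ for smooth $f$. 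Combining these in a joint two-dimensional Taylor expansion of the six grid values of $\cd[2]_x\fd_t\tud{m}{k}$ yields
\[ \cd[2]_x\fd_t\tud{m}{k}=u_{txx}(P_k)+O((\Delta t)^2)+O((\Delta x)^2). \]

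For the right-hand side, I would write $u^\ast(x):=u(t^{m+1/2},x)$, substitute $\tud{m+1/2}{k}=u^\ast(x_k)+R_k$ with $R_k=O((\Delta t)^2)$, and expand $\mathcal{A}_{\rd}(\tud{m+1/2}{})\tud{m+1/2}{k}$ as the sum of two pieces: the pointwise product $(\omega-\cd[2]_x\tud{m+1/2}{k})\cd_x\tud{m+1/2}{k}$ and the outer central difference $\cd_x\bigl((\omega-\cd[2]_x\tud{m+1/2}{})*\tud{m+1/2}{}\bigr)_k$. In each piece, replace every $\cd^{(i)}_x$ of a smooth grid function by its continuous derivative plus a Taylor remainder of size $O((\Delta x)^2)$; the error terms propagate through the products multiplicatively, but since each factor is either bounded or itself of the form (smooth continuous function)$+O((\Delta t)^2+(\Delta x)^2)$, the net contribution stays $O((\Delta t)^2+(\Delta x)^2)$. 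The leading-order terms add up to $\mathcal{A}(u^\ast)u^\ast(x_k)=u_{txx}(P_k)$, so after cancellation with the leading term from the LHS one obtains $|\taud{m+1/2}{k}|\le b'((\Delta t)^2+(\Delta x)^2)$ for a constant $b'$ depending only on a finite collection of $L^\infty$ norms of derivatives of $u$. Taking the discrete $L^2$ norm introduces a harmless factor of $\sqrt{L}$ which is absorbed into the final constant $b$.

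The main obstacle is verifying that the stated regularity $u(t)\in C^7(\Sb)$ is \emph{exactly} what is needed, and is not wasteful. The critical term is the outer $\cd_x$ acting on the product $(\omega-\cd[2]_xu^\ast)\,u^\ast$: the replacement $\cd[2]_xu^\ast=u^\ast_{xx}+\tfrac{(\Delta x)^2}{12}u^{\ast}_{xxxx}+O((\Delta x)^4)$ introduces an $O((\Delta x)^2)$ perturbation proportional to $u^\ast\cdot u^{\ast}_{xxxx}$, and the outer $\cd_x$ then converts this into $-\tfrac{(\Delta x)^2}{12}\partial_x(u^\ast u^{\ast}_{xxxx})(x_k)+O((\Delta x)^4)$, whose pointwise control requires $u^\ast\cdot u^{\ast}_{xxxx}\in C^3$, i.e.\ $u^\ast\in C^7$. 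Meanwhile $u(\cdot,x)\in C^3$ is the minimum temporal smoothness for the midpoint error $O((\Delta t)^2\|u_{ttt}\|_\infty)$ to make sense, and a standard mixed-derivative bound (invoking joint continuity of $u_{tt},u_{ttt}$, which is implicit in the statement) ensures that the time errors survive spatial differencing without deterioration.
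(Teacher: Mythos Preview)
Your proposal is correct and follows precisely the paper's approach—a Taylor expansion about the midpoint $((m+\tfrac12)\Delta t,\,k\Delta x)$—which the paper records in a single sentence without further detail. (Your side remark that $C^7$ is \emph{exactly} needed slightly overcounts: for the $(\Delta x)^2$-level correction $H=u^\ast u^\ast_{xxxx}$ you only need $\cd_x H$ bounded, i.e.\ $H\in C^1$, not $H\in C^3$; but this does not affect the validity of the argument under the stated hypothesis.)
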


\begin{proof}
	The Taylor expansion around the point $ ( (m+1/2) \Delta t, k \Delta x ) $ yields the lemma. 
\end{proof}

Owing to the above-mentioned lemma, by introducing $ \ed{m}{k} := \ud{m}{k} - \tud{m}{k} $, we see that
\begin{equation}\label{eq_lerror}
	\cd[2]_x \fd_t \ed{m}{k} = 2 \omega \cd_x \ed{m+1/2}{k} - \left( \eta_k \big(\ud{m+1/2}{} \big)- \eta_k \big( \tud{m+1/2}{} \big) \right) - \taud{m}{k},
\end{equation}
where
\[ \eta_k (w) : = \left( \cd[2]_x w_k \right) \left( \cd_x w_k \right) + \cd_x \left( w_k  \left( \cd[2]_x w_k \right) \right). \]
Then, the following lemma holds.

\begin{lemma}\label{lem_db}
	When $ \Delta t $ and $ \Delta x $ are sufficiently small,
	\[ \big\| \fd_x \ed{m}{} \big\|^2 \le a \Delta t \big\| \ed{m}{} \big\|_{H^1_K (\Sb)}^2  + 2 a \Delta t \sum_{j=0}^{m-1} \big\| \ed{j}{} \big\|_{H^1_K (\Sb)}^2  + b L T ((\Delta x)^2 + (\Delta t)^2 )^2 \]
	holds for some positive constant $a \in \RR$ depending only on the exact solution $u$ and all $ m = 0,1,\dots, M $,
	where $ b $ is a positive constant in Lemma~\ref{lem_lte}.
\end{lemma}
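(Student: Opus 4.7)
The plan is to take the $L^2_K(\Sb)$ inner product of the error equation~\eqref{eq_lerror} with $\ed{m+1/2}{}$. By the same summation-by-parts computation that proved the discrete conservation of $\mathcal{H}_{\rd}$ in Section~\ref{subsec_design}, the left-hand side equals $-\fd_t \mathcal{H}_{\rd}(\ed{m}{})$, while on the right the term $2\omega\langle \ed{m+1/2}{}, \cd_x \ed{m+1/2}{}\rangle$ vanishes by skew-symmetry of $\cd_x$. For the nonlinear contribution, rather than expanding $\eta_k(\ud{m+1/2}{}) - \eta_k(\tud{m+1/2}{})$ term by term, it is cleaner to observe that $\eta(v) = 2\omega\cd_x v - \mathcal{A}_{\rd}(v)v$ and split
\[
\mathcal{A}_{\rd}(\ud{m+1/2}{})\ud{m+1/2}{} - \mathcal{A}_{\rd}(\tud{m+1/2}{})\tud{m+1/2}{} = \mathcal{A}_{\rd}(\ud{m+1/2}{})\ed{m+1/2}{} + \bigl(\mathcal{A}_{\rd}(\ud{m+1/2}{}) - \mathcal{A}_{\rd}(\tud{m+1/2}{})\bigr)\tud{m+1/2}{}.
\]
The first piece contributes nothing after pairing with $\ed{m+1/2}{}$ by skew-symmetry of $\mathcal{A}_{\rd}(\ud{m+1/2}{})$; the second, using the affine dependence $\mathcal{A}_{\rd}(u) - \mathcal{A}_{\rd}(\tilde u) = -(\cd[2]_x e)\cd_x - \cd_x \circ (\cd[2]_x e)$ and one summation by parts against $\cd_x$, collapses to the commutator-type expression $\langle \ed{m+1/2}{}(\cd_x \tud{m+1/2}{}) - \tud{m+1/2}{}(\cd_x \ed{m+1/2}{}),\, \cd[2]_x \ed{m+1/2}{}\rangle$.

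Writing $e = \ed{m+1/2}{}$ and $\tilde u = \tud{m+1/2}{}$, I would then estimate this commutator by using $\cd[2]_x = \bd_x\fd_x$, applying the summation-by-parts $\langle f, \bd_x\fd_x e\rangle = -\langle \fd_x f, \fd_x e\rangle$, and expanding via the discrete Leibniz rule for $\fd_x$. This produces four pieces. Three of them, namely $-\langle(\fa_x e)(\cd_x\fd_x \tilde u),\fd_x e\rangle$, $-\langle(\fd_x e)(\fa_x\cd_x \tilde u),\fd_x e\rangle$, and $\langle(\fd_x \tilde u)(\fa_x\cd_x e),\fd_x e\rangle$, are each bounded by $C(\tilde u)\|e\|_{H^1_K(\Sb)}^2$ via Cauchy--Schwarz, Lemma~\ref{lem_ineqs}, and the elementary inequality $\|\cd_x e\| \le 2\|\fd_x e\|$ (immediate from Lemma~\ref{lem_eig}), with $C(\tilde u)$ depending only on uniform-norms of low-order discrete derivatives of $\tilde u$. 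The main obstacle is the fourth piece $\langle(\fa_x \tilde u)(\cd_x\fd_x e),\fd_x e\rangle$, which at first sight cannot be controlled by $\|\fd_x e\|^2$ alone because $\cd_x\fd_x e$ is one order higher than $\fd_x e$.

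This obstacle is resolved by the discrete product identity
\[
w\,(\cd_x w) = \tfrac{1}{2}\cd_x(w^2) - \tfrac{(\Delta x)^2}{4}\fd_x\!\bigl[(\bd_x w)^2\bigr],
\]
a direct analogue of $w w_x = \tfrac{1}{2}(w^2)_x$ that is verified by elementary expansion. Applied with $w = \fd_x e$, the first part becomes $-\tfrac{1}{2}\langle\cd_x\fa_x \tilde u,(\fd_x e)^2\rangle$ after skew-symmetry of $\cd_x$, bounded by $\tfrac{1}{2}\|\cd_x\fa_x \tilde u\|_\infty\|\fd_x e\|^2$; the second part becomes $\tfrac{(\Delta x)^2}{4}\langle\bd_x\fa_x \tilde u,(\cd[2]_x e)^2\rangle$ after one summation by parts against $\fd_x$, and the prefactor $(\Delta x)^2$ is exactly what absorbs the inverse estimate $\|\cd[2]_x e\| \le (2/\Delta x)\|\fd_x e\|$, producing the bound $\|\bd_x\fa_x \tilde u\|_\infty\|\fd_x e\|^2$. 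Assembling everything yields $\fd_t\mathcal{H}_{\rd}(\ed{m}{}) \le C(\tilde u)\|\ed{m+1/2}{}\|_{H^1_K(\Sb)}^2 + \tfrac{1}{2}\|\taud{m+1/2}{}\|^2$ after disposing of the truncation pairing by Young's inequality. Multiplying by $\Delta t$, summing from $j = 0$ to $m-1$, using $\ed{0}{} = 0$ (so $\mathcal{H}_{\rd}(\ed{0}{}) = 0$), and applying $\|\ed{j+1/2}{}\|_{H^1_K(\Sb)}^2 \le \|\ed{j}{}\|_{H^1_K(\Sb)}^2 + \|\ed{j+1}{}\|_{H^1_K(\Sb)}^2$ together with Lemma~\ref{lem_lte} for the truncation sum produces the stated inequality with $a$ a suitable multiple of $C(\tilde u)$.
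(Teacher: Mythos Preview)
Your argument is correct and arrives at exactly the same commutator expression
\[
\bigl\langle\, e\,(\cd_x \tilde u) - \tilde u\,(\cd_x e),\ \cd[2]_x e \,\bigr\rangle
\]
that the paper obtains; you reach it via the skew-symmetry of $\mathcal{A}_{\rd}(\ud{m+1/2}{})$, while the paper expands $\eta(u)-\eta(\tilde u)$ through the polarization $\tfrac{u+\tilde u}{2}$ and observes that the purely-$e$ remainder cancels. These are genuinely the same identity seen from two sides. The real difference is in how the ``dangerous'' half $-\langle \tilde u\,(\cd_x e), \cd[2]_x e\rangle$ is estimated. The paper keeps this term intact and uses the exact discrete identity $(\cd[2]_x e)(\cd_x e)=\tfrac12\,\bd_x\bigl[(\fd_x e)^2\bigr]$, so one summation by parts gives $\tfrac12\langle(\fd_x e)^2,\fd_x\tilde u\rangle$ with no correction term at all. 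You instead integrate by parts first and then, for the resulting term $\langle(\fa_x\tilde u)(\cd_x\fd_x e),\fd_x e\rangle$, invoke the product identity $w\,\cd_x w=\tfrac12\cd_x(w^2)-\tfrac{(\Delta x)^2}{4}\fd_x[(\bd_x w)^2]$ with $w=\fd_x e$, and close the $(\Delta x)^2$-correction by the inverse estimate $\|\cd[2]_x e\|\le(2/\Delta x)\|\fd_x e\|$. Both routes give the same final bound; the paper's is marginally cleaner (no correction, no inverse inequality), while yours makes the cancellation mechanism via skew-symmetry of $\mathcal{A}_{\rd}$ more transparent. Two small remarks: your bound $\|\cd_x e\|\le 2\|\fd_x e\|$ can be sharpened to $\|\cd_x e\|\le\|\fd_x e\|$ since $\cd_x=\ba_x\fd_x$ and $\|\ba_x\|_2=1$; and $\bd_x\fa_x\tilde u=\cd_x\tilde u$, so the coefficient in your correction term is just $\|\cd_x\tilde u\|_\infty$.
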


\begin{proof}
	Since we see that
	\begin{align*}
		\fd_t \big\| \fd_x \ed{m}{} \big\|^2
		&= \left\langle  \fd_x \ed{m+1/2}{} ,  \fd_x \fd_t \ed{m}{} \right\rangle \\
		&= - \left\langle \ed{m+1/2}{} , \cd[2]_x \fd_t \ed{m}{} \right\rangle \\
		&= - \left\langle \ed{m+1/2}{} , 2 \omega \cd_x \ed{m+1/2}{} \right\rangle + \left\langle \ed{m+1/2}{}, \eta \big( \ud{m}{} \big) - \eta \big( \tud{m}{} \big) \right\rangle 
		 + \left\langle \ed{m+1/2}{} , \taud{m}{} \right\rangle \\
		&= \left\langle \ed{m+1/2}{}, \eta \big( \ud{m}{} \big) - \eta \big( \tud{m}{} \big) \right\rangle + \frac{\big\| \ed{m+1/2}{} \big\|^2 + \big\| \taud{m}{} \big\|^2 }{2},
	\end{align*}
	we cope with the first term on the right-hand side.
	In the remainder of this proof, we abbreviate $ \ed{m+1/2}{} $ by omitting the superscript $ (m+1/2) $
	(similar abbreviations are used for $ \ud{m+1/2}{}$ and $ \tud{m+1/2}{} $).
	Here, by using
	\begin{align*}
		&\left\langle e , \left( \cd[2]_x u \right) * \left( \cd_x u \right)  - \left( \cd[2]_x \tilde{u} \right) * \left( \cd_x \tilde{u} \right) \right\rangle \\
		={}& \left\langle e, \left( \cd[2]_x \frac{u + \tilde{u} }{2} \right) *  \left( \cd_x e \right) + \left( \cd[2]_x e \right) * \left( \cd_x \frac{u + \tilde{u} }{2} \right) \right\rangle \\
		={}& \sum_{k=1}^K \left( e_k \right) \left( \cd_x e_k \right) \left( \cd[2]_x \frac{u_k + \tilde{u}_k}{2} \right) \Delta x + \sum_{k=1}^K \left( e_k \right) \left( \cd[2]_x e_k \right) \left( \cd_x \frac{u_k + \tilde{u}_k}{2} \right) \Delta x
	\end{align*}
	and
	\begin{align*}
		&\left\langle e,  \cd_x \left( u * \left( \cd[2]_x u \right) - \tilde{u} * \left( \cd[2]_x \tilde{u} \right) \right) \right\rangle \\
		={}& - \left\langle \cd_x e , \frac{u + \tilde{u}}{2} * \left( \cd[2]_x e \right) + e * \left( \cd[2]_x \frac{u +\tilde{u}}{2} \right) \right\rangle \\
		={}& - \sum_{k=1}^K \left( \cd_x e_k \right) \left( \cd[2]_x e_k \right) \left( \frac{u_k + \tilde{u}_k }{2} \right) \Delta x + \sum_{k=1}^K \left( \cd_x e_k \right) e_k \left( \cd[2]_x \frac{u_k + \tilde{u}_k}{2} \right) \Delta x,
	\end{align*}
	we obtain
	\begin{align*}
	\left\langle \ed{m+1/2}{}, \eta \big( \ud{m}{} \big) - \eta \big( \tud{m}{} \big) \right\rangle 
	&= \sum_{k=1}^K \left( \cd[2]_x e_k \right) \left( e_k \left( \cd_x \frac{u_k + \tilde{u}_k }{2} \right) - \left( \cd_x e_k \right) \left( \frac{u_k + \tilde{u}_k }{2} \right) \right) \Delta x \\
	&= \sum_{k=1}^K \left( \cd[2]_x e_k \right) \left( e_k \left( \cd_x \tilde{u}_k \right) - \left( \cd_x e_k \right) \tilde{u}_k \right) \Delta x.
	\end{align*}
	The upper bound of the first term can be calculated by
	\begin{align*}
		\left| \sum_{k=1}^K \left( \cd[2]_x e_k \right) \left( e_k \left( \cd_x \tilde{u}_k \right) \right) \Delta x \right| 
		&= \left| - \sum_{k=1}^K \left( \fd_x e_k \right) \fd_x \left( e_k \left( \cd_x \tilde{u}_k \right) \right) \Delta x \right| \\
		&=  \left|- \sum_{k=1}^K \left( \fd_x e_k \right) \left( \left( \fd_x e_k \right) \left( \cd_x \tilde{u}_k \right) - e_{k+1} \left( \fd_x \cd_x \tilde{u}_k \right) \right) \Delta x \right| \\
		&\le  \left\| \fd_x e \right\|^2 \sup_{t \in [0,T] } \| u_x (t) \|_{\infty} + \frac{ \left\| \fd_x e \right\|^2 + \left\| e \right\|^2  }{2} \sup_{t \in [0,T]} \| u_{xx} (t) \|_{\infty},
	\end{align*}
	and that of the second term can be calculated by
	\begin{align*}
		\left|- \sum_{k=1}^K \left( \cd[2]_x e_k \right)  \left( \cd_x e_k \right) \tilde{u}_k \Delta x \right|
		&= \left| -\sum_{k=1}^K \left( \frac{1}{2} \bd_x \left( \fd_x e_k \right)^2 \right) \tilde{u}_k \Delta x \right| 
		= \left| \frac{1}{2} \sum_{k=1}^K \left( \fd_x e_k \right)^2 \left( \fd_x \tilde{u}_k \right) \Delta x \right| \\
		&\le \frac{1}{2} \left\| \fd_x e \right\|^2 \sup_{t \in [0,T]} \| u_x (t) \|_{\infty} .
	\end{align*}
	Therefore, by introducing $ D := \max \left\{ \sup_{t \in [0,T] } \| u_x (t) \|_{\infty}, \sup_{t \in [0,T]} \| u_{xx} (t) \|_{\infty} \right\} $,
	we see that
	\[ \fd_t \big\| \fd_x \ed{m}{} \big\|^2 \le \frac{1+D}{2} \big\| \ed{m+1/2}{} \big\|^2 + 2D \big\| \fd_x \ed{m+1/2}{} \big\|^2 + \frac{1}{2} \big\| \taud{m}{} \big\|^2. \]
	Summing up these inequalities for $m$ yields
	\begin{align*}
		\big\| \fd_x \ed{m}{} \big\|^2
		&\le \big\| \fd_x \ed{0}{} \big\|^2 + a \Delta t \big\| \ed{m}{} \big\|_{H^1_K (\Sb)}^2  + 2 a \Delta t \sum_{j=0}^{m-1} \big\| \ed{m}{} \big\|_{H^1_K (\Sb)}^2 
		 + \frac{b}{2} LT ((\Delta x)^2 + (\Delta t)^2 )^2 ,
	\end{align*}
	where $ a = \max \{ (1+D)/2, 2 D \} $.
	Since $ \fd_x \ed{0}{k} = 0 $ holds by the definition of $ \ud{0}{k} $, the inequality proves the lemma.
\end{proof}

Since we have some bound for $ \big\| \fd_x \ed{m}{} \big\| $ that is suitable for the discrete Gronwall lemma,
it is sufficient to obtain a similar bound for $ \big\| \ed{m}{} \big\| $.
However, the standard procedure , which requires the estimation of  $ \fd_t \big\| \ed{m}{} \big\| $, seems to be difficult
because we only have the estimate of the local truncation error with respect to the original form~\eqref{eq_proposed},
which involves the term $ \cd[2]_x \fd_t \ud{m}{k} $.
Therefore, instead of $ \fd_t \big\| \ed{m}{} \big\| $,
we use the estimate of the average error below.

\begin{lemma}\label{lem_ae}
	Let $ \bar{e}^{(m)} $ be the average error defined as $ \bar{e}^{(m)} := (1/L) \sum_{k=1}^K \ed{m}{k} \Delta x $.
	Then,
	\[ \left| \bar{e}^{(m)} \right| \le c ( \Delta x )^2 \]
	holds for some positive constant $c$ depending only on the exact solution $u$.
\end{lemma}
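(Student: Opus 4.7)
The plan is to exploit the alignment between the discrete and continuous conservation laws for the zeroth moment. Immediately after stating the scheme~\eqref{eq_proposed}, we observed that the simultaneous preservation of $\mathcal{H}_\rd$ and $\mathcal{F}_\rd$ forces $\sum_{k=1}^K \ud{m}{k}\Delta x = \sum_{k=1}^K \ud{0}{k}\Delta x = L\hd$ for every $m$. On the continuous side, equation~\eqref{eq_av} gives $\int_\Sb u(t,x)\,\rd x = \int_\Sb u_0(x)\,\rd x$ for every $t$. Thus both the numerical and the exact solutions freeze their spatial means at the initial value, and the entire discrepancy $\bar{e}^{(m)}$ must come from a pair of quadrature mismatches evaluated at fixed times.

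Concretely, I would subtract and add the exact mean integrals to write
\[ L\bar{e}^{(m)} = \left[ \sum_{k=1}^K u_0(k\Delta x)\Delta x - \int_\Sb u_0(x)\,\rd x \right] - \left[ \sum_{k=1}^K u(m\Delta t, k\Delta x)\Delta x - \int_\Sb u(m\Delta t, x)\,\rd x \right], \]
using $\ud{0}{k} = u_0(k\Delta x)$ together with the two conservation laws above. Each bracket is the standard rectangle-rule quadrature error for a periodic function in $C^2(\Sb)$, so a Taylor expansion on each subinterval of length $\Delta x$ together with summation delivers a bound of the form $C\,L\,(\Delta x)^2 \sup_{t\in[0,T]}\|u_{xx}(t)\|_\infty$, which is finite under the smoothness hypothesis $u(t)\in C^7(\Sb)$ already imposed in Lemma~\ref{lem_lte}.

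Adding the two bounds yields $|\bar{e}^{(m)}| \le c(\Delta x)^2$ uniformly in $m$, with a constant $c$ depending only on $L$ and on $\sup_{t\in[0,T]}\|u_{xx}(t)\|_\infty$. There is no serious technical obstacle here: the scheme was engineered so that the zeroth moment becomes a discrete invariant, and this reduces what would otherwise be a Gronwall-type accumulation argument to two independent quadrature estimates at fixed times, with no dependence on $\Delta t$ at all. The conceptual point — and the reason the lemma is needed as an auxiliary to convergence analysis — is that $\big\|\ed{m}{}\big\|$ cannot be controlled directly from $\fd_t\big\|\ed{m}{}\big\|$, since the local truncation error of Lemma~\ref{lem_lte} is available only in the $\cd[2]_x\fd_t$ form. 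One therefore plans to combine the $H^1$-seminorm control of Lemma~\ref{lem_db} with the discrete Poincar\'e--Wirtinger inequality~\eqref{ineq_dPW}; the latter leaves precisely the mean error to be handled separately, which is exactly what the present lemma supplies.
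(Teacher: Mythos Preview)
Your proposal is correct and follows essentially the same approach as the paper: both exploit the discrete and continuous conservation of the spatial mean to reduce $\bar{e}^{(m)}$ to a sum of two rectangle-rule quadrature errors at fixed times, each bounded by a constant times $(\Delta x)^2 \|u_{xx}\|_\infty$. The paper's proof is slightly more terse, writing out the explicit constant $\tfrac{1}{12}$ from the midpoint/rectangle error, but the argument is identical.
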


\begin{proof}
	Owing to the preservation of $ \sum_{k=1}^K \ud{m}{k} \Delta x $ and $ \int_{\Sb} u(t,x) \rd x $, we see that
	\begin{align*}
	\left| \bar{e}^{(m)} \right|
	&= \left| \frac{1}{L} \left( \sum_{k=1}^{K} \ud{0}{k} \Delta x - \int_{\Sb} u_0 (x) \rd x \right) + \frac{1}{L} \left( \int_{\Sb} u(m \Delta t,x) \rd x - \sum_{k=1}^K \tud{m}{k} \Delta x \right) \right| \\
	&\le  \frac{1}{L} \left| \sum_{k=1}^{K} u_0 (k \Delta x) \Delta x - \int_{\Sb} u_0 (x) \rd x \right| + \frac{1}{L} \left| \int_{\Sb} u(m \Delta t,x) \rd x - \sum_{k=1}^K \tud{m}{k} \Delta x \right| \\
	&\le  \frac{1}{12} (\Delta x)^2 \| (u_0)_{xx} \|_{\infty} + \frac{1}{12} (\Delta x)^2 \| u_{xx} (m \Delta t) \|_{\infty}.
	\end{align*}
	Therefore, the lemma holds.
\end{proof}

By using the above-mentioned lemmas, we obtain the desired global error estimate as follows.

\begin{theorem}\label{thm_conv}
	Let $u$ be the solution of~\eqref{eq_mHS} satisfying $ u ( t) \in C^7 (\Sb) $ and $ u(\cdot, x) \in C^3 (0,T) $,
	and let $ \ud{m}{k} $ be the numerical solution of the scheme~\eqref{eq_proposed}.
	Then, for any $ m = 1, \dots , M $ and sufficiently small $ \Delta t $ and $ \Delta x $, $ \tud{m}{k} := u( m \Delta t , k \Delta x ) $ satisfies
	\[ \big\| \ud{m}{} - \tud{m}{} \big\|_{H^1_K (\Sb)} \le \sqrt{b'T + c' } \left( (\Delta x)^2 + ( \Delta t)^2 \right) \exp ( a' T ) \]
	for some positive constants $ a' , b' , c' \in \RR $ depending only on $ u $, $ \omega $, and $L$.
\end{theorem}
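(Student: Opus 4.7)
The plan is to turn Lemma~\ref{lem_db}, which controls only $\big\|\fd_x \ed{m}{}\big\|^2$, into a closed discrete Gronwall inequality for $\big\|\fd_x \ed{m}{}\big\|^2$ alone, by bounding the $L^2_K(\Sb)$ portion of $\big\|\ed{m}{}\big\|_{H^1_K(\Sb)}$ in terms of $\big\|\fd_x \ed{m}{}\big\|$ plus an $O((\Delta x)^2)$ average-error remainder. The non-trivial point is that the scheme is accessible only through the singular combination $\cd[2]_x \fd_t \ud{m}{k}$, so the standard device of differentiating $\big\|\ed{m}{}\big\|^2$ in discrete time is unavailable; the discrete conservation laws, through Lemma~\ref{lem_ae}, replace it.

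First I would decompose $\ed{m}{} = (\ed{m}{} - \bar{e}^{(m)}\mathbf{1}) + \bar{e}^{(m)}\mathbf{1}$, apply the discrete Poincar\'e--Wirtinger inequality (Lemma~\ref{lem_dPW}) to the zero-mean part, and combine $\|v\| \le \sqrt{L}\|v\|_\infty$ with Lemma~\ref{lem_ae} on the constant part, to obtain
\begin{equation*}
\big\|\ed{m}{}\big\| \le L \big\|\fd_x \ed{m}{}\big\| + \sqrt{L}\, c\,(\Delta x)^2 .
\end{equation*}
Squaring (with $(a+b)^2 \le 2a^2 + 2b^2$) and adding $\big\|\fd_x \ed{m}{}\big\|^2$ gives
\begin{equation*}
\big\|\ed{m}{}\big\|_{H^1_K(\Sb)}^2 \le (2L^2+1) \big\|\fd_x \ed{m}{}\big\|^2 + 2L c^2 (\Delta x)^4 .
\end{equation*}

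Next I would substitute this into Lemma~\ref{lem_db}. The resulting diagonal term $a\Delta t\,(2L^2+1)\big\|\fd_x \ed{m}{}\big\|^2$ on the right-hand side is absorbed into the left by restricting to $\Delta t$ small enough that $a\Delta t\,(2L^2+1) \le 1/2$. What remains has the form
\begin{equation*}
\big\|\fd_x \ed{m}{}\big\|^2 \le C_1 \Delta t \sum_{j=0}^{m-1} \big\|\fd_x \ed{j}{}\big\|^2 + C_2 \bigl((\Delta x)^2+(\Delta t)^2\bigr)^2 ,
\end{equation*}
where $C_1$ depends only on $a$ and $L$ and $C_2$ is linear in $T$ (collecting the $bLT$ term of Lemma~\ref{lem_db} and the $2L c^2 (\Delta x)^4$ contribution weighted by the discrete time-sum). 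The discrete Gronwall lemma (Lemma~\ref{lem_dG}) then yields $\big\|\fd_x \ed{m}{}\big\|^2 \le C_2 ((\Delta x)^2+(\Delta t)^2)^2 \exp(C_1 T)$.

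Reinserting this into the $H^1_K(\Sb)$ estimate above and taking square roots produces the stated inequality, with $a' = C_1/2$, $b' = (2L^2+1)\,C_2/T$, and $c' = 2L c^2$. I expect the main obstacle to be the absorption step and the accompanying bookkeeping: one must ensure that the smallness condition on $\Delta t$ is compatible with the hypotheses of Theorem~\ref{thm_ue}, and carefully track which constants appear linearly in $T$ in front of the exponential (from accumulating local truncation errors in Lemma~\ref{lem_db}) versus inside the exponent (from Gronwall), so that the final bound matches the exact form $\sqrt{b'T+c'}\,((\Delta x)^2+(\Delta t)^2)\,\exp(a'T)$.
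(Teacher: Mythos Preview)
Your proposal is correct and follows essentially the same approach as the paper: both use the discrete Poincar\'e--Wirtinger inequality together with Lemma~\ref{lem_ae} to bound $\big\|\ed{m}{}\big\|_{H^1_K(\Sb)}^2$ by $(2L^2+1)\big\|\fd_x \ed{m}{}\big\|^2 + 2Lc^2(\Delta x)^4$, substitute into Lemma~\ref{lem_db}, absorb the diagonal term under a smallness condition on $\Delta t$, and finish with the discrete Gronwall lemma. The only cosmetic difference is that the paper applies Gronwall directly to $\big\|\ed{m}{}\big\|_{H^1_K(\Sb)}^2$ (keeping the sum in $H^1_K$ form, which avoids accumulating the $2Lc^2(\Delta x)^4$ remainder over $j$), whereas you apply it to $\big\|\fd_x \ed{m}{}\big\|^2$ and reinsert at the end; both yield the stated bound with suitable constants.
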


\begin{proof}
	By using the discrete Poincar\'e--Wirtinger inequality~\eqref{ineq_dPW}, we obtain
	\begin{align*}
		\big\| \ed{m}{} \big\|^2
		&\le \sqrt{L} \big\| \ed{m}{} \big\|_{\infty}^2
		\le L \left(  \big\| \ed{m}{} - \bar{e}^{(m)} \mathbf{1} \big\|_{\infty} + \big| \bar{e}^{(m)} \big| \right)^2 
		\le 2L \left( \big\| \ed{m}{} - \bar{e}^{(m)} \mathbf{1} \big\|_{\infty}^2 + \big| \bar{e}^{(m)} \big|^2 \right) \\
		&\le 2L^2 \big\| \fd_x \ed{m}{} \big\|^2 + 2L c^2 ( \Delta x )^4.
	\end{align*}
	Therefore, we see that
	\begin{align*}
		\big\| \ed{m}{} \big\|_{H^1_K(\Sb)}^2
		&\le (1 + 2 L^2 ) \big\| \fd_x \ed{m}{} \big\|^2 + 2 L c^2 (\Delta x)^4 \\
		&\le \alpha \Delta t \big\| \ed{m}{} \big\|_{H^1_K (\Sb)}^2  + 2 \alpha \Delta t \sum_{j=0}^{m-1} \big\| \ed{j}{} \big\|_{H^1_K (\Sb)}^2 
		 +  L ( \beta T+2c^2) ((\Delta x)^2 + (\Delta t)^2 )^2,
	\end{align*}
	where $ \alpha := (1+2L^2) a $ and $ \beta := (1 + 2 L^2 ) b $.
	Thus, by rewriting this inequality as
	\[ (1 - \alpha \Delta t ) \big\| \ed{m}{} \big\|_{H^1_K(\Sb)}^2
	\le 2 \alpha \Delta t \sum_{j=0}^{m-1} \big\| \ed{j}{} \big\|_{H^1_K (\Sb)}^2  +  L ( \beta T+2c^2) ((\Delta x)^2 + (\Delta t)^2 )^2 \]
	and using the discrete Gronwall lemma (Lemma~\ref{lem_dG}) with the assumption that $ \alpha \Delta t < (1/2)$, we obtain
	\[ \big\| \ed{m}{} \big\|_{H^1_K(\Sb)}^2 \le  2 L ( \beta T + 2 c^2 ) \left( (\Delta x)^2 + (\Delta t)^2 \right)^2 \exp ( 4 \alpha T ), \]
	which proves the theorem.
\end{proof}

The following $L^{\infty}$ error estimate is an immediate corollary of the above-mentioned theorem
owing to the discrete Sobolev lemma (Lemma~\ref{lem_dS}).

\begin{corollary}\label{cor_conv}
	Under the assumption in Theorem~\ref{thm_conv},
	\[  \big\| \ud{m}{} - \tud{m}{} \big\|_\infty \le \hat{L} \sqrt{b'T + c' } \left( (\Delta x)^2 + ( \Delta t)^2 \right) \exp ( a' T )  \]
	holds.
\end{corollary}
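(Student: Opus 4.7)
The plan is to apply the discrete Sobolev lemma (Lemma~\ref{lem_dS}) to the error sequence $ \ed{m}{} := \ud{m}{} - \tud{m}{} $ and then directly invoke Theorem~\ref{thm_conv} to bound the resulting $ H^1_K (\Sb) $-norm. Concretely, Lemma~\ref{lem_dS} gives, for every $ m = 1, \dots, M $,
\[ \big\| \ud{m}{} - \tud{m}{} \big\|_\infty \le \hat{L} \big\| \ud{m}{} - \tud{m}{} \big\|_{H^1_K(\Sb)}. \]
Theorem~\ref{thm_conv} then supplies the bound on the $ H^1_K (\Sb) $-norm, from which the stated inequality follows immediately by substitution.

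Since Lemma~\ref{lem_dS} only requires $ v \in H^1_K (\Sb) $, and $ \ud{m}{} $ and $ \tud{m}{} $ are both elements of $ H^1_K (\Sb) $ (being vectors in $ \RR^K $ satisfying the discrete periodic boundary condition), no additional hypothesis beyond those assumed in Theorem~\ref{thm_conv} is needed. In particular, the regularity assumption $ u(t) \in C^7 (\Sb) $ and $ u(\cdot, x) \in C^3 (0,T) $ and the smallness condition on $ \Delta t $ and $ \Delta x $ are inherited directly from Theorem~\ref{thm_conv}, and the same constants $ a' , b', c' \in \RR $ appear in the conclusion.

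There is essentially no obstacle: the entire proof is a one-line chain of inequalities combining Lemma~\ref{lem_dS} with Theorem~\ref{thm_conv}. The only subtlety worth noting is that the constant $ \hat{L} = \sqrt{2} \max\{ 1/\sqrt{L}, \sqrt{L}\} $ from Lemma~\ref{lem_dS} depends only on $ L $, so the structure of the error bound — second-order accuracy in both $ \Delta x $ and $ \Delta t $, with an exponential-in-$T$ prefactor — is preserved. Thus the corollary is truly immediate, and the proof will simply read: ``By Lemma~\ref{lem_dS}, $ \| \ed{m}{} \|_\infty \le \hat{L} \| \ed{m}{} \|_{H^1_K(\Sb)} $. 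Combining this with Theorem~\ref{thm_conv} yields the desired inequality.''
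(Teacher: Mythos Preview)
Your proposal is correct and matches the paper's own approach exactly: the paper states that Corollary~\ref{cor_conv} is an immediate consequence of Theorem~\ref{thm_conv} via the discrete Sobolev lemma (Lemma~\ref{lem_dS}), which is precisely the one-line argument you describe.
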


\section{Numerical experiment}
\label{sec_ne}

\subsection{Convergence rate}

In this section, we confirm the convergence rate by using the initial condition
\begin{equation}\label{eq_sw}
	u_0 (x) = a \sin ( 2 \pi x ) - ( \pi a)^2,
\end{equation}
where $ a > 0 $ is a parameter.
Here, we set $ L = 1 $ and $ \omega = 1/2 $.
Note that, in~\eqref{eq_sw}, the purpose of the constant term is to ensure that $ \mathcal{F} (u_0) = 0 $.

As the exact solution blows up in finite time for $ a \ge \sqrt{2} / \pi $ (see Proposition~\ref{prop_bu}),
we employ sufficiently small $ a $, i.e., $ a = 1/100 $
(the numerical solution breaks in finite time even when $ a = 1/10 $, as shown in the following section).

To solve the nonlinear equation, we use the fixed point iteration for~\eqref{eq_fpi} and transform the solution $ \vd{m}{k} $ into $ \ud{m}{k} $ by using the relation $ \ud{m}{k} = \ibd_x \vd{m}{k} $.
To illustrate the theory, we employ the step size $ \Delta t $ satisfying $ \Delta t \le \epsilon_1 (2,r) $ (see Remark~\ref{rem_dt}).

We compare the $L^{\infty} $ error of numerical solutions of our scheme at $ T = 10 $
for $ K =  32, 64, 128, 256, 512, 1024$.
Since $ 0.1 \le \epsilon_1 (2, r) $ holds for $K =32$,
we employ $M$ such that $ M = (100/32) K $.
As we do not know the exact solution for this initial condition,
we regard the numerical solution by our scheme~\eqref{eq_proposed} with $ K = 2048 $ and $ M = 6400$ as the reference solution (Fig.~\ref{fig_ref}).
As shown in Fig.~\ref{fig_err}, the $ L^{\infty} $ error decays in the second order, as indicated in Corollary~\ref{cor_conv}.

\begin{figure}[ht]
\pgfplotsset{width=8cm}
\begin{minipage}{0.49\textwidth}
	\centering
\begin{tikzpicture}
\begin{axis}[compat = newest,xlabel={$x$},ylabel={$u$},enlarge x limits=false,ymin=-0.01,ymax=0.012]
\addplot[black,smooth] table {data/mHSsinerefT10.dat};
\end{axis}
\end{tikzpicture}
\caption{Reference solution at $ T = 10$.}
\label{fig_ref}
\end{minipage}
\begin{minipage}{0.49\textwidth}
	\centering
\begin{tikzpicture}
\begin{loglogaxis}[compat = newest,xlabel={$\Delta x$},ylabel={$ L^{\infty} $ Error}]
\addplot[black,mark = x,only marks] table {data/mHSsineErr.dat};
\addplot[smooth] coordinates {
	(0.001,0.000001)
	(0.03,0.0009)
};
\end{loglogaxis}
\end{tikzpicture}
\caption{$ L^{\infty} $ errors of our scheme for each mesh size: `$ \times $' denotes the numerical result, and the straight line represents the slope of the second order.}
\label{fig_err}
\end{minipage}
\end{figure}

\subsection{Blow up phenomena}

In this section, we present a numerical simulation for the initial condition~\eqref{eq_sw} with $ a = 1/10 $.
Here, because the value of $ \| u_{xx} \|_{\infty} $ increases rapidly, we adaptively choose the step size as
$ \Delta t^{(m+1)} = \min \Big\{ \Delta t^{(m)}, \alpha / \big\| \cd[2]_x \ud{m}{} \big\|_{\infty} \Big\} $, where $ \Delta t^{(0)} = 0.0001 \le  \epsilon_1 (2,r) $ and $ \alpha := 1.5 \Delta t^{(0)} \big\| \cd[2]_x \ud{0}{}  \big\|_{\infty} $ ($K = 2048$).
In the numerical experiment, we execute the scheme~\eqref{eq_proposed} with 8,000 steps ($ T \approx 2.054 $).

Figures~\ref{fig_bu_ev} and \ref{fig_bu_ss} show the time evolutions and snapshots of the corresponding numerical solutions.
As shown in these figures, in this time regime, $ \fd_x \ud{m}{k} $ develops a sharp front around $ x \approx 0.33 $, while $ \ud{m}{k} $ is smooth.

\begin{figure}[ht]
	\begin{minipage}{0.49\textwidth}
		\centering
		\pgfplotsset{width=8cm,compat=newest}
		\begin{tikzpicture}
		\begin{axis}[
		xlabel=$x$, 
		ylabel=$t$, 
		zlabel=$u$, zmin = -0.25, zmax=0.02, 
		view={30}{45},
				title=\footnotesize]
		enlarge x limits=false]
		\addplot3[black] table {data/BUSineA10K2048M80kstep1.dat};
		\addplot3[black] table {data/BUSineA10K2048M80kstep2.dat};
		\addplot3[black] table {data/BUSineA10K2048M80kstep3.dat};
		\addplot3[black] table {data/BUSineA10K2048M80kstep4.dat};
		\addplot3[black] table {data/BUSineA10K2048M80kstep5.dat};
		\addplot3[black] table {data/BUSineA10K2048M80kstep6.dat};
		\addplot3[black] table {data/BUSineA10K2048M80kstep7.dat};
		\addplot3[black] table {data/BUSineA10K2048M80kstep8.dat};
		\addplot3[black] table {data/BUSineA10K2048M80kstep9.dat};
		\addplot3[black] table {data/BUSineA10K2048M80kstep10.dat};
		\addplot3[black] table {data/BUSineA10K2048M80kstep11.dat};
		\end{axis}
		\end{tikzpicture}
		
		(a) $ u(t,x) $
	\end{minipage}
\begin{minipage}{0.49\textwidth}
	\centering
	\pgfplotsset{width=8cm,compat=newest}
	\begin{tikzpicture}
	\begin{axis}[
	xlabel=$x$, 
	ylabel=$t$, 
	zlabel=$u_x$, zmin = -3, zmax=1, 
	view={30}{45},
	enlarge x limits=false]
	\addplot3[black] table {data/BUderSineA10K2048M80kstep1.dat};
	\addplot3[black] table {data/BUderSineA10K2048M80kstep2.dat};
	\addplot3[black] table {data/BUderSineA10K2048M80kstep3.dat};
	\addplot3[black] table {data/BUderSineA10K2048M80kstep4.dat};
	\addplot3[black] table {data/BUderSineA10K2048M80kstep5.dat};
	\addplot3[black] table {data/BUderSineA10K2048M80kstep6.dat};
	\addplot3[black] table {data/BUderSineA10K2048M80kstep7.dat};
	\addplot3[black] table {data/BUderSineA10K2048M80kstep8.dat};
	\addplot3[black] table {data/BUderSineA10K2048M80kstep9.dat};
	\addplot3[black] table {data/BUderSineA10K2048M80kstep10.dat};
	\addplot3[black] table {data/BUderSineA10K2048M80kstep11.dat};
	\end{axis}
	\end{tikzpicture}
	
	(b) $ u_x (t,x) $
\end{minipage}
\caption{Time evolution of the corresponding solution for the initial condition~\eqref{eq_sw} with $ a = 1/10 $. The lines are for every 800 steps.}
\label{fig_bu_ev}
\end{figure}
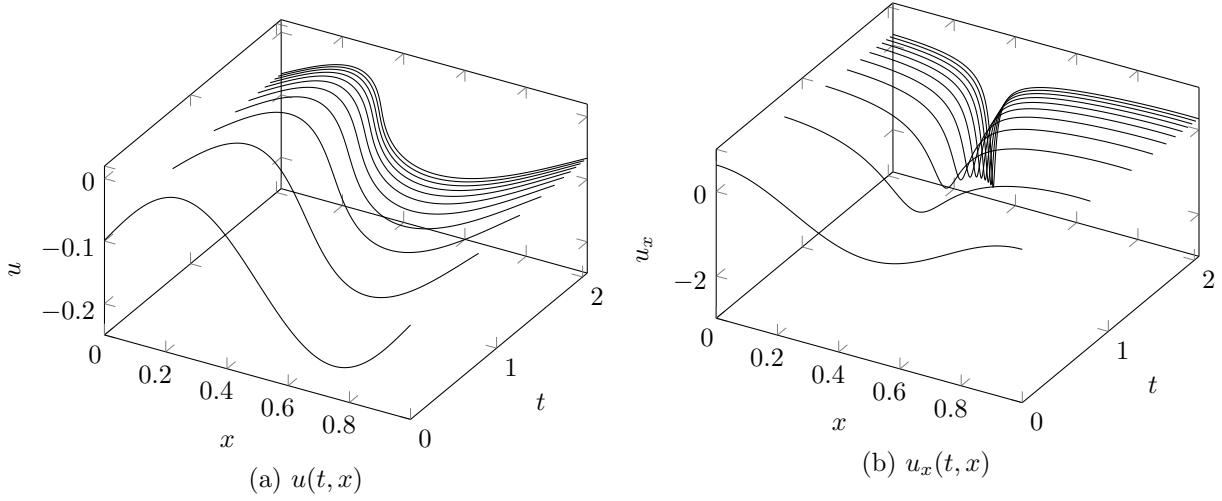

\begin{figure}[ht]
	\begin{minipage}{0.49\textwidth}
	\centering
	\pgfplotsset{width=8cm,compat=newest}
	\begin{tikzpicture}
	\begin{axis}[
	xlabel=$x$, 
	ylabel=$t$, 
	zlabel=$u$, zmin = -0.2, zmax=0, 
	view={0}{0},
	title=\footnotesize]
	enlarge x limits=false]
	\addplot3[black] table {data/BUSineA10K2048M80kstep11.dat};
	\end{axis}
	\end{tikzpicture}
	
	(a) $ u(t,x) $
\end{minipage}
\begin{minipage}{0.49\textwidth}
	\centering
	\pgfplotsset{width=8cm,compat=newest}
	\begin{tikzpicture}
	\begin{axis}[
	xlabel=$x$, 
	ylabel=$t$, 
	zlabel=$u_x$, zmin = -3, zmax=1, 
	view={0}{0},
	enlarge x limits=false]
	\addplot3[black] table {data/BUderSineA10K2048M80kstep11.dat};
	\end{axis}
	\end{tikzpicture}
	
	(b) $ u_x (t,x) $
\end{minipage}
\caption{Snapshots at $ t \approx 2.054 $ of the corresponding solution for the initial condition~\eqref{eq_sw} with $ a = 1/10 $.}
\label{fig_bu_ss}
\end{figure}
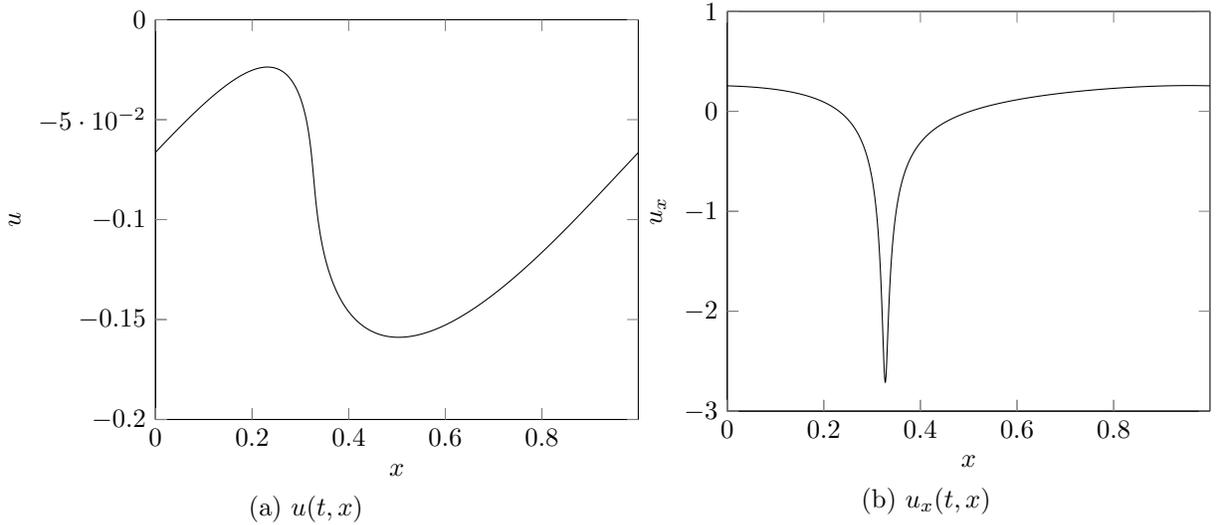

This tendency can be observed more clearly by plotting the time evolution of $ \big\| \fd_x \ud{m}{} \big\|_{\infty} $ and $ \big\| \cd[2]_x \ud{m}{} \big\|_{\infty} $ (see Figure~\ref{fig_bu_norms}).
In other words, at time $ t \approx 2.054 $, the value of $ \big\| \fd_x \ud{m}{} \big\|_{\infty} $ is not large, while that of $ \big\| \cd[2]_x \ud{m}{}  \big\|_{\infty} $ is extremely large.
Moreover, as the slope of $ \big\| \cd[2]_x \ud{m}{} \|_{\infty} $ is larger than that of $ \big\| \fd_x \ud{m}{} \big\|_{\infty} $,
$ \| u_x (t) \|_{\infty} $ is expected to remain finite when $ \| u_{xx} (t) \|_{\infty} $ blows up.
To discuss this point, we show the time evolution of $ \big\| \fd_x \ud{m}{} \big\|_{\infty}^{-1} $ and $ \big\| \cd[2]_x \ud{m}{} \big\|_{\infty}^{-1/2} $ in Figure~\ref{fig_bu_norms_inv}.

\begin{figure}[ht]
\begin{minipage}{0.48\textwidth}
	\centering
	\pgfplotsset{width=8cm,compat=newest}
	\begin{tikzpicture}
	\begin{axis}[compat = newest,xlabel={$ t $},ylabel={$ \big\| \fd_x \ud{m}{} \big\|_{\infty} $},ymin=0,ymax = 4,xmin=0,xmax = 2.4]
	\addplot[black] table {data/BUSineA10K2048M80kux.dat};
	\addplot[dashed,domain = 0:2.3] {( 1.467743302964564-0.536726670065028*x )^(-1)};
	\end{axis}
	\end{tikzpicture}
\end{minipage}
\begin{minipage}{0.48\textwidth}
	\centering
	\pgfplotsset{width=8cm,compat=newest}
	\begin{tikzpicture}
	\begin{axis}[compat = newest,xlabel={$ t $},ylabel={$  \big\| \cd[2]_x \ud{m}{} \big\|_{\infty} $},ymin=0,ymax =200,xmin = 0,xmax = 2.4]
	\addplot[black] table {data/BUSineA10K2048M80kuxx.dat};
	\addplot[dashed,domain = 0:2.3] {( 0.520069041674186 -0.213232180683550 *x )^(-2)};
	\end{axis}
	\end{tikzpicture}
\end{minipage}
\caption{Time evolution of $ \big\| \fd_x \ud{m}{} \big\|_{\infty}  $ (left panel) and $  \big\| \cd[2]_x \ud{m}{} \big\|_{\infty} $ (right panel).
The solid lines represent the numerical solution, and the dashed lines are obtained by linear regression (see Fig.~\ref{fig_bu_norms_inv}).}
\label{fig_bu_norms}
\end{figure}

\begin{figure}[ht]
	\begin{minipage}{0.48\textwidth}
		\centering
		\pgfplotsset{width=8cm,compat=newest}
		\begin{tikzpicture}
		\begin{axis}[compat = newest,xlabel={$ t $},ylabel={$ \big\| \fd_x \ud{m}{} \big\|_{\infty} ^{-1} $},ymin=0,ymax = 0.8,xmin =1.5,xmax=2.1]
		\addplot[black] table {data/BUSineA10K2048M80kuxinv.dat};
		\addplot[dashed,domain = 0:2.3] { 1.467743302964564-0.536726670065028*x};
		\end{axis}
		\end{tikzpicture}
	\end{minipage}
	\begin{minipage}{0.48\textwidth}
		\centering
		\pgfplotsset{width=8cm,compat=newest}
		\begin{tikzpicture}
		\begin{axis}[compat = newest,xlabel={$ t $},ylabel={$ \big\| \cd[2]_x \ud{m}{} \big\|_{\infty}^{-1/2} $},ymin=0,ymax =0.3,xmin = 1.5, xmax=2.1]
		\addplot[black] table {data/BUSineA10K2048M80kuxxinv.dat};
		\addplot[dashed,domain = 0:2.3] { 0.520069041674186 -0.213232180683550 *x };
		\end{axis}
		\end{tikzpicture}
	\end{minipage}
\caption{Time evolution of $ \big\| \fd_x \ud{m}{} \big\|_{\infty}^{-1} $ (left panel) and $  \big\| \cd[2]_x \ud{m}{} \big\|_{\infty}^{-1/2} $ (right panel). The solid lines represent the numerical solution, and the dashed lines are obtained by linear regression.}
\label{fig_bu_norms_inv}
\end{figure}

From these numerical results, their blow up rates can be assumed to be
\begin{align*}
\| u_x (t) \|_{\infty} &= \Theta \left( \frac{1}{T^{(2)} -t } \right), &
\| u_{xx} (t) \|_{\infty} &= \Theta \left( \frac{1}{(T^{(\infty)} -t)^2 } \right). &
\end{align*}
Thus, by linear regression for $ \big\| \fd_x \ud{m}{} \big\|_{\infty}^{-1} $ and $ \big\| \cd[2]_x \ud{m}{} \big\|_{\infty}^{-1/2} $ (we use only the last two-thirds of the data for linear regression),
we assume the blow up times as $ T^{(\infty)} \approx 2.4 $ and $ T^{(2)} \approx 2.7 $ (see Table~\ref{tab_blowup}).
From these numerical observations, we expect that the blow up in the sense of Proposition~\ref{prop_bu_uxx} occurs (with $ \| u_x \|_{\infty} < \infty $ at that time).
In other words, there is some initial condition satisfying $ T^{(\infty)} < T^{(2)} $, i.e.,
$ \| u_{xx} \|_{\infty} $ blows up first and $ \| u_x \|_{\infty} $ blows up subsequently.

\begin{table}[ht]
	\centering
	\caption{Estimated blow up times $ T^{(2)} $ and $ T^{(\infty)} $ by linear regression for each $K$.
	$ R^2 $ denotes the coefficient of determination for each linear regression.}
	\label{tab_blowup}
	\begin{tabular}{c||c|c||c|c}
		\hline
		 & \multicolumn{2}{c||}{$ \| u_x \|_{\infty} $} & \multicolumn{2}{c}{$ \| u_{xx} \|_{\infty} $} \\ \hline
		 $K$ & $ T^{(2)} $ & $ R^2$ & $ T^{(\infty)} $ & $ R^2 $ \\ \hline \hline
		 128	& 3.0467 & 0.99274 & 2.5323 & 0.98572 \\
		 	\hline
		 256	& 2.8509 & 0.99865 & 2.4308 & 0.99782 \\
		 	\hline
		 512	& 2.7861 & 0.99975 & 2.4450 & 0.99953 \\
		 	\hline
		 1024	& 2.7680 & 0.99995 & 2.4497 & 0.99848 \\
		 	\hline
		 2048	& 2.7648 & 0.99997 & 2.4390 & 0.99883 \\
		 	\hline
	\end{tabular}
\end{table}

This observation is just numerical evidence of the blow up phenomena.
However, we believe that this observation can encourage further mathematical analysis of the mHS equation.

\section{Concluding remarks}
\label{sec_cr}

In this paper, we derived a stable and convergent finite difference method for the initial value problem for the mHS equation~\eqref{eq_mHS} on the periodic domain.
To the best of our knowledge, our contribution is the first attempt to rigorously justify the numerical method for evolutionary equations with a mixed derivative
except for the simplest case~\cite{CRR2017}.
Hence, it could open the door to numerical analysis of such equations.

Moreover, by using our numerical scheme,
we observed some interesting blow up phenomena,
i.e., the blow up times of $ \| u_{xx} (t) \|_{\infty} $ and $ \| u_x (t) \|_{\infty} $ are different.
Some rigorous justification of these phenomena is left to future work.

\section*{Acknowledgment}
	The author is grateful to Yuto Miyatake and Takayasu Matsuo for their insightful comments.

%
%

\appendix

\section{Relation to the method of Miyatake, Cohen, Furihata, and Matsuo~\cite{MCFM2017}}
\label{sec_relation}

Miyatake, Cohen, Furihata, and Matsuo~\cite{MCFM2017} derived the initial value problem
\begin{equation}\label{eq_mHS_MCFM}
\begin{cases}
u_{t} = \left( \partial_x^{\dagger} \right)^2 \left( -\frac{1}{2} \left(u^2 \right)_{xxx} + 2 \omega u_x + \frac{1}{2} \left( u_x^2 \right)_x \right) \qquad & (t \in (0,T), x \in \Sb ), \\
u (0,x) = u_0 (x) & (x \in \Sb),
\end{cases}
\end{equation}
by assuming that the underdetermined form~\eqref{eq_mHS2} has a traveling wave solution ($ \partial_x^{\dagger} $ is a pseudo-inverse of the differential operator; see \cite{MCFM2017} for details).
Accordingly, they devised underdetermined discretization~\eqref{eq_dvdm_mHS2_con} for preservation of $ \mathcal{H} $
and employed the Moore--Penrose inverse $ \icd[2]_x $ of the second-order central difference $ \cd[2]_x $.
Thus, their scheme can be written in the form
\begin{equation}\label{eq_MCFM}
\begin{cases}
\fd_t \ud{m}{k} = \icd[2]_x \left( \mathcal{A}_{\rd} \big( \ud{m+1/2}{} \big) \ud{m+1/2}{k} \right) \quad & (m=0,1,\dots,M-1;k \in \mathbb{Z}),\\
\ud{0}{k} = u_0 (k \Delta x) & (k \in \mathbb{Z}),\\
\ud{m}{k+K} = \ud{m}{k} & ( m = 0,1,\dots,M-1, k \in \mathbb{Z} ).
\end{cases}
\end{equation}
Although they were interested in traveling wave solutions, this scheme can be employed for the problem~\eqref{eq_mHS_MCFM} even when the solution is not actually a traveling wave.

Actually, initial value problems~\eqref{eq_mHS_ode} and~\eqref{eq_mHS_MCFM} are equivalent for sufficiently smooth solutions.
Thus, both our scheme~\eqref{eq_proposed} and that of Miyatake, Cohen, Furihata, and Matsuo~\cite{MCFM2017}~\eqref{eq_MCFM} can be regarded as a numerical scheme for the artificial initial value problem~\eqref{eq_mHS_ode}.
Moreover,
although the derivations of these schemes are quite different,
these methods happen to be equivalent for the generalized problem~\eqref{eq_mHS_ode} (or \eqref{eq_mHS_MCFM}) as follows.

\begin{theorem}\label{thm_equiv}
	If $ \ud{m}{k} $ is a solution of \eqref{eq_proposed}, then it is also a solution of~\eqref{eq_MCFM}, and vice versa.
\end{theorem}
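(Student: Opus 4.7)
The plan is to exploit the identity $\cd[2]_x \icd[2]_x = \icd[2]_x \cd[2]_x = P$ established in Section~\ref{sec_pre}, where $P$ is the orthogonal projector onto zero-mean vectors. Applying $\icd[2]_x$ to the evolution equation of~\eqref{eq_proposed} yields $P \fd_t \ud{m}{} = \icd[2]_x \mathcal{A}_{\rd}\big(\ud{m+1/2}{}\big)\ud{m+1/2}{}$, while applying $\cd[2]_x$ to the evolution equation of~\eqref{eq_MCFM} yields $\cd[2]_x \fd_t \ud{m}{} = P\,\mathcal{A}_{\rd}\big(\ud{m+1/2}{}\big)\ud{m+1/2}{}$. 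Hence the equivalence will follow once we establish two auxiliary claims: (i) every solution of~\eqref{eq_proposed} has $\fd_t \ud{m}{}$ in the zero-mean subspace, so that $P\fd_t \ud{m}{} = \fd_t \ud{m}{}$; and (ii) for every $v$, the vector $\mathcal{A}_{\rd}(v)v$ is zero-mean, so that $P\,\mathcal{A}_{\rd}(v)v = \mathcal{A}_{\rd}(v)v$.

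I expect claim~(ii) to be the main obstacle. The natural approach is to invoke the skew-symmetry of $\mathcal{A}_{\rd}(v)$ to write $\langle \mathbf{1}, \mathcal{A}_{\rd}(v)v\rangle = -\langle \mathcal{A}_{\rd}(v)\mathbf{1}, v\rangle$, and then to compute $\mathcal{A}_{\rd}(v)\mathbf{1}$ directly. Using $\cd_x \mathbf{1} = 0$, the definition of $\mathcal{A}_{\rd}$ collapses to $\mathcal{A}_{\rd}(v)\mathbf{1} = \cd_x(\omega \mathbf{1} - \cd[2]_x v) = -\cd_x \cd[2]_x v$, so $\langle \mathbf{1}, \mathcal{A}_{\rd}(v)v\rangle = \langle \cd_x \cd[2]_x v, v\rangle$. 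Since $\cd_x$ is skew-symmetric, $\cd[2]_x$ is symmetric, and these operators commute, the product $\cd_x \cd[2]_x$ is skew-symmetric, so the inner product vanishes, giving~(ii).

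For claim~(i), I would recall that~\eqref{eq_proposed} preserves $\mathcal{H}_{\rd}$ automatically (by the skew-symmetry argument reproduced in Section~\ref{subsec_design}) and preserves $\mathcal{F}_{\rd}$ via its imposed discrete constraint. Since $\mathcal{F}_{\rd}(v) = 2\omega L \bar{v} + 2\mathcal{H}_{\rd}(v)$ with $\bar{v} := (1/L)\sum_{k=1}^K v_k \Delta x$, joint preservation of both invariants forces preservation of $\bar{u}^{(m)}$, so $\fd_t \ud{m}{}$ is zero-mean, as required. This completes the forward direction $\text{\eqref{eq_proposed}} \Rightarrow \text{\eqref{eq_MCFM}}$.

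For the converse, claim~(ii) combined with the second displayed identity above immediately reproduces the evolution equation of~\eqref{eq_proposed}. It then remains to verify the discrete constraint $\mathcal{F}_{\rd}(\ud{m+1}{}) = \mathcal{F}_{\rd}(\ud{0}{})$; this follows by combining (a) the automatic preservation of $\bar{u}^{(m)}$, inherited from $\fd_t \ud{m}{} \in \Range(\icd[2]_x) = \Range(P)$ built into~\eqref{eq_MCFM}, with (b) the $\mathcal{H}_{\rd}$-preservation obtained by plugging the just-recovered evolution equation into the standard calculation of Section~\ref{subsec_design}. The decomposition $\mathcal{F}_{\rd} = 2\omega L \bar{(\cdot)} + 2\mathcal{H}_{\rd}$ then delivers the constraint, and induction on $m$ together with the matching initial data finishes the proof.
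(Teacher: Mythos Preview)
Your argument is correct and follows the same overall blueprint as the paper's proof (exploit $\cd[2]_x\icd[2]_x=\icd[2]_x\cd[2]_x=P$ and reduce to showing that both $\fd_t\ud{m}{}$ and $\mathcal{A}_{\rd}(v)v$ are zero-mean), but two aspects differ. First, for claim~(ii) the paper proceeds by a direct telescoping computation, rewriting $\sum_k \mathcal{A}_{\rd}(v)v_k\,\Delta x$ as $-\tfrac12\sum_k \fd_x(\bd_x v_k)^2\,\Delta x=0$; your route via $\langle\mathbf{1},\mathcal{A}_{\rd}(v)v\rangle=-\langle\mathcal{A}_{\rd}(v)\mathbf{1},v\rangle=\langle\cd_x\cd[2]_x v,v\rangle=0$ is more structural and arguably cleaner, since it uses only the skew-symmetry of $\mathcal{A}_{\rd}(v)$ and of $\cd_x\cd[2]_x$ rather than a specific algebraic identity. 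Second, you explicitly verify the discrete constraint $\mathcal{F}_{\rd}(\ud{m+1}{})=\mathcal{F}_{\rd}(\ud{0}{})$ in the backward direction, whereas the paper's proof stops after recovering the evolution equation and leaves this step implicit; your treatment here is more complete.

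One minor correction: the decomposition should read $\mathcal{F}_{\rd}(v)=2\omega L\,\bar v+\mathcal{H}_{\rd}(v)$, not $+2\mathcal{H}_{\rd}(v)$ (compare \eqref{eq_ic_d} and \eqref{eq_norm_d}). This does not affect your argument, since preservation of both $\bar u^{(m)}$ and $\mathcal{H}_{\rd}$ still forces preservation of $\mathcal{F}_{\rd}$ regardless of the coefficient.
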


\begin{proof}
	To prove the former part, we assume that $ \ud{m}{k} $ is a solution of~\eqref{eq_proposed}.
	Then, owing to the conservation of $ \sum_{k=1}^K \ud{m}{k} \Delta x $,
	we observe that $ \fd_t \ud{m}{} \in \Car \big( \cd[2]_x \big) $ holds.
	Therefore, on the basis of the property of the Moore--Penrose inverse, it holds that
	\[ \fd_t \ud{m}{k} = \icd[2]_x \left( \mathcal{A}_{\rd} \big( \ud{m+1/2}{} \big) \ud{m+1/2}{k} \right), \]
	which proves the former part.
	
	On the other hand, for the latter part, we assume that $ \ud{m}{k} $ is a solution of~\eqref{eq_MCFM}.
	By operating with $ \cd[2]_x $ on both sides of the scheme in \eqref{eq_MCFM}, we see that
	\[ \cd[2]_x \fd_t \ud{m}{k} = \cd[2]_x \icd[2]_x \left( \mathcal{A}_{\rd} \big( \ud{m+1/2}{} \big) \ud{m+1/2}{k} \right). \]
	Since $ \cd[2]_x \icd[2]_x = P $, it is sufficient to prove that $ \mathcal{A}_{\rd} \big( \ud{m+1/2}{} \big) \ud{m+1/2}{k} \in \Range \big ( \cd[2]_x \big) $.
	Therefore,
	\begin{align*}
	\sum_{k=1}^K \mathcal{A}_{\rd} \left( \ud{m+1/2}{} \right) \ud{m+1/2}{k} \Delta x
	&= \sum_{k=1}^K \cd_x \left( \left( 2 \omega - \left( \cd[2]_x \ud{m+1/2}{k} \right) \right) \ud{m+1/2}{k}  \right) \Delta x \\
	& \qquad \qquad - \sum_{k=1}^K \left( \cd[2]_x \ud{m+1/2}{k} \right) \left( \cd_x \ud{m+1/2}{k} \right) \Delta x \\
	&= - \sum_{k=1}^K \frac{1}{2} \fd_x \left( \bd_x \ud{m+1/2}{k} \right)^2 \Delta x = 0
	\end{align*}
	proves the latter part.
\end{proof}

\end{document}